\documentclass[11pt]{amsart}

\usepackage{amsmath,amssymb,amsthm}
\usepackage{graphicx}
\usepackage[all]{xy}
\usepackage{overpic}

\newtheorem{theorem}{Theorem}[section]
\newtheorem{proposition}[theorem]{Proposition}
\newtheorem{lemma}[theorem]{Lemma}
\newtheorem{corollary}[theorem]{Corollary}

\theoremstyle{definition}
\newtheorem{definition}[theorem]{Definition}
\newtheorem{example}[theorem]{Example}
\newtheorem{remark}[theorem]{Remark}

\newtheorem{introtheorem}{Theorem}

\numberwithin{equation}{section}
\numberwithin{figure}{section}
\numberwithin{table}{section}

\newcommand{\Z}{\mathbb{Z}}
\newcommand{\Q}{\mathbb{Q}}
\newcommand{\R}{\mathbb{R}}
\newcommand{\A}{\mathcal{A}}
\newcommand{\TT}{\mathsf{T}}
\newcommand{\M}{\mathcal{M}}
\newcommand{\G}{\mathcal{G}}

\newcommand{\F}{\mathcal{F}}
\newcommand{\D}{\mathrm{D}}
\newcommand{\C}{\mathrm{C}}
\newcommand{\CC}{\mathcal{C}}
\renewcommand{\SS}{\mathfrak{S}}
\newcommand{\SSS}{\mathcal{S}}
\newcommand{\T}{\mathcal{T}}
\newcommand{\LL}{\mathcal{L}}
\newcommand{\OO}{\mathcal{O}}
\newcommand{\K}{\mathrm{K}}

\newcommand{\Ker}{\operatorname{Ker}}
\renewcommand{\Im}{\operatorname{Im}}
\newcommand{\Int}{\operatorname{Int}}
\newcommand{\Mon}{\mathrm{Mon}}
\newcommand{\Mag}{\mathrm{Mag}}
\newcommand{\ifil}{\operatorname{i-filter}}
\newcommand{\ideg}{\operatorname{i-deg}}
\newcommand{\edeg}{\operatorname{e-deg}}
\newcommand{\Lk}{\operatorname{Lk}}
\newcommand{\Cr}{\operatorname{Cr}}
\newcommand{\Sym}{\operatorname{Sym}}

\newcommand{\Aut}{\operatorname{Aut}}

\newcommand{\Homeo}{\operatorname{Homeo}}
\newcommand{\MJ}{\operatorname{MJ}}
\newcommand{\Lie}{\operatorname{Lie}}
\newcommand{\sgn}{\operatorname{sgn}}
\newcommand{\comm}{\operatorname{comm}}
\newcommand{\Gr}{\operatorname{Gr}}

\newcommand{\Zt}{\widetilde{Z}}
\newcommand{\Cob}{\mathcal{C}\mathit{ob}}
\newcommand{\LCob}{\mathcal{LC}\mathit{ob}}

\newcommand{\ZK}{Z^{\mathrm{K}}}

\newcommand{\ZKLMO}{Z^{\mathrm{K\text{-}LMO}}}
\newcommand{\tsA}{{}^{\mathit{ts}}\!\mathcal{A}}
\newcommand{\btT}{{}_{\mathit{b}}^{\mathit{t}}\mathcal{T}}

\newcommand{\Cub}{\mathcal{C}\mathit{ub}}

\newcommand{\ang}[1]{\left\langle{#1}\right\rangle}
\newcommand{\angg}[1]{\langle\!\langle{#1}\rangle\!\rangle}
\newcommand{\fc}[1]{\lfloor{#1}\rceil}
\newcommand{\curvearrowru}{\rotatebox[origin=c]{180}{$\curvearrowleft$}}
\newcommand{\op}{\mathrm{op}}
\newcommand{\Id}{\mathrm{Id}}

\newcommand{\raisegraph}[3]{\raisebox{#1}{\includegraphics[height=#2]{#3}}}

\newcommand{\strutgraph}[2]{\hspace{-0.2em}\raisebox{-0.7em}{ 
\begin{overpic}[height=2em]{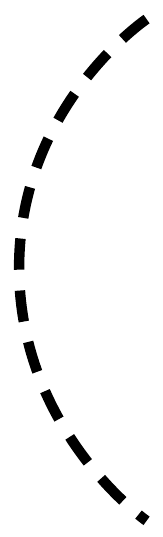}\scriptsize
 \put (35,80){$#1$}
 \put (35,0){$#2$}
\end{overpic}}\hspace{1em}}

\newcommand{\dstrutgraph}[2]{\raisebox{-0.7em}{
\begin{overpic}[height=2em]{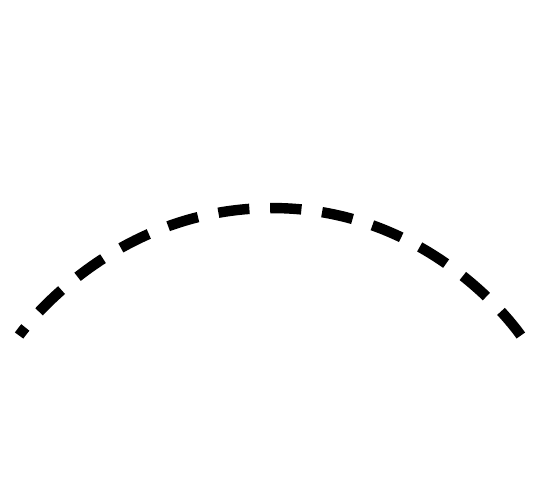}\scriptsize
 \put (-10,-5){$#1$} 
 \put (85,-5){$#2$} 
\end{overpic}}\hspace{0.5em}}

\newcommand{\ldstrutgraph}[2]{\raisebox{-0.7em}{
\begin{overpic}[height=2em]{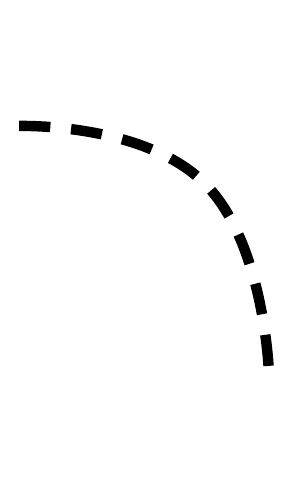}\scriptsize
 \put (35,-5){$#1$}
 \put (-5,90){$#2$}
\end{overpic}}}

\newcommand{\lustrutgraph}[2]{\raisebox{-0.7em}{
\begin{overpic}[height=2em]{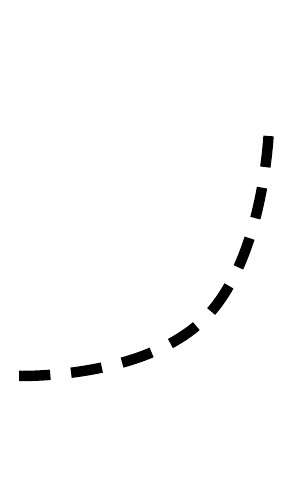}\scriptsize
 \put (35,80){$#1$}
 \put (-5,-15){$#2$}
\end{overpic}}}

\newcommand{\Ygraph}[3]{\raisebox{-0.7em}{
\begin{overpic}[height=2em]{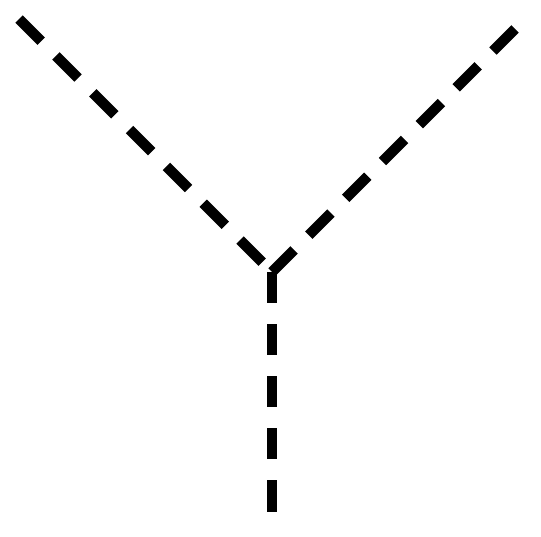}\scriptsize
 \put (0,105){$#1$}
 \put (90,105){$#2$}
 \put (60,0){$#3$}
\end{overpic}}}

\newcommand{\dYgraph}[3]{\raisebox{-0.7em}{
\begin{overpic}[height=2em]{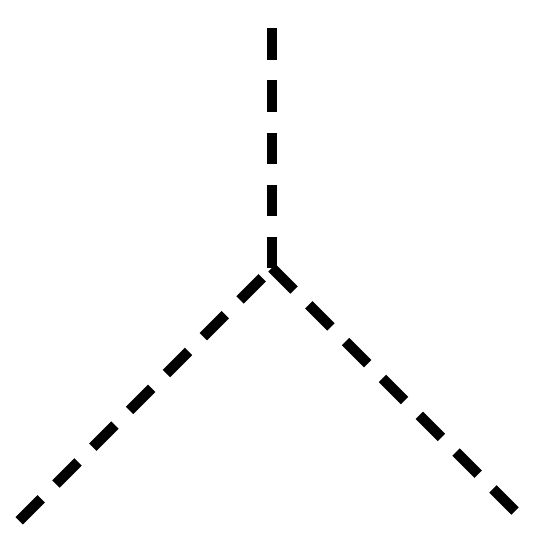}\scriptsize
 \put (60,80){$#1$}
 \put (0,-30){$#2$}
 \put (90,-30){$#3$}
\end{overpic}}}

\newcommand{\uYgraph}[3]{\hspace{0.2em}\raisebox{-0.7em}{
\begin{overpic}[height=2em]{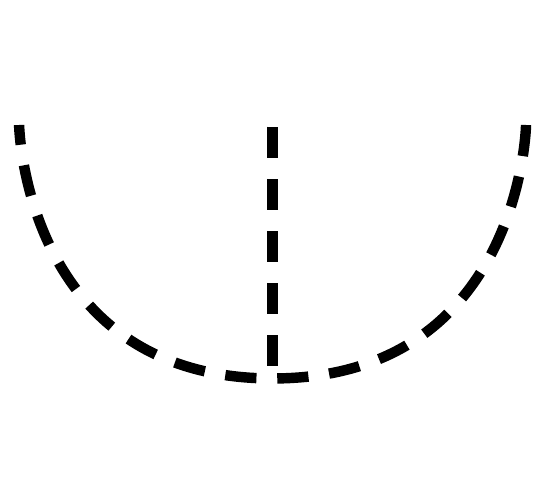}\scriptsize
 \put (-10,80){$#1$}
 \put (40,80){$#2$}
 \put (90,80){$#3$}
\end{overpic}}}

\newcommand{\Phigraph}[2]{\hspace{-0.2em}\raisebox{-0.7em}{
\begin{overpic}[height=2em]{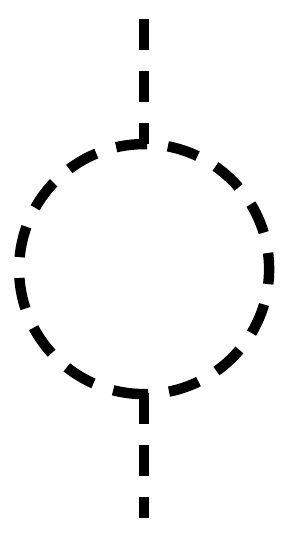}\scriptsize
 \put (40,80){$#1$}
 \put (40,0){$#2$}
\end{overpic}}\hspace{0.5em}}

\newcommand{\dPhigraph}[2]{\raisebox{-0.7em}{
\begin{overpic}[height=2em]{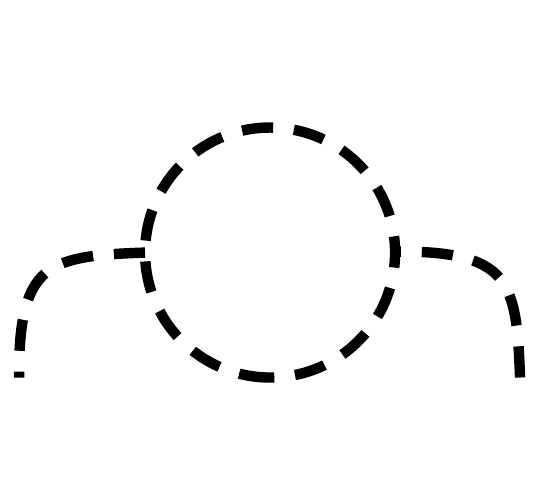}\scriptsize
 \put (-10,-5){$#1$}
 \put (80,-5){$#2$}
\end{overpic}}\hspace{0.3em}}

\newcommand{\lrPhigraph}[2]{\hspace{-0.1em}\raisebox{-0.7em}{
\begin{overpic}[height=2em]{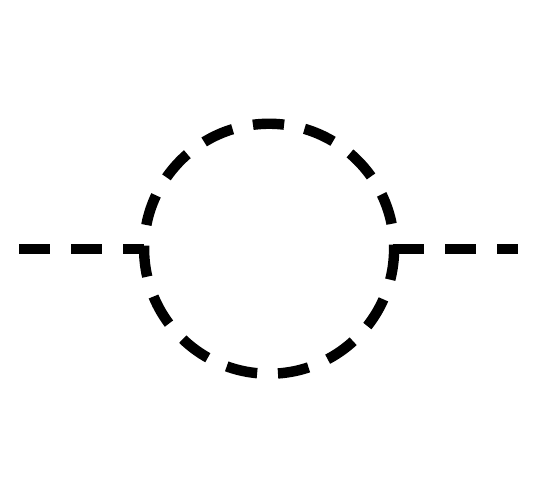}\scriptsize
 \put (-5,55){$#1$}
 \put (85,55){$#2$}
\end{overpic}}\hspace{0.3em}}

\newcommand{\ldPhigraph}[2]{\hspace{-0.2em}\raisebox{-0.7em}{
\begin{overpic}[height=2em]{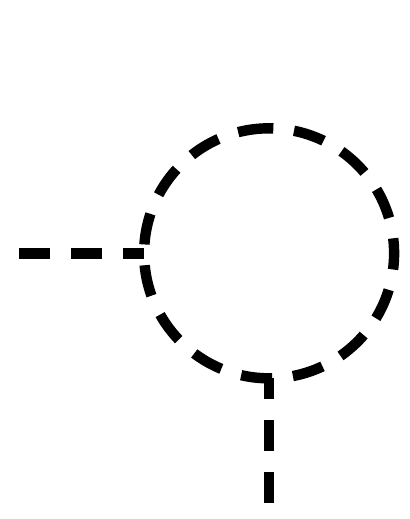}\scriptsize
 \put (60,-10){$#1$}
 \put (-5,65){$#2$}
\end{overpic}}}

\newcommand{\luPhigraph}[2]{\hspace{-0.1em}\raisebox{-0.7em}{
\begin{overpic}[height=2em]{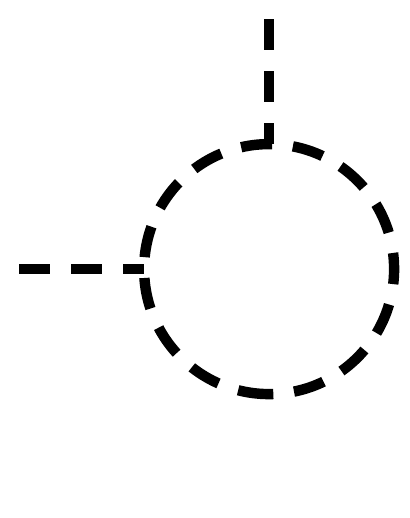}\scriptsize
 \put (60,90){$#1$}
 \put (-5,60){$#2$}
\end{overpic}}}

\newcommand{\rTgraph}[3]{\raisebox{-0.7em}{
\begin{overpic}[height=2em]{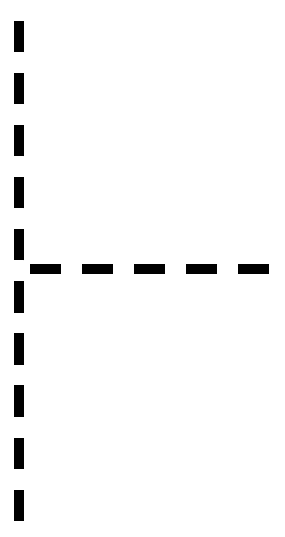}\scriptsize
 \put (10,85){$#1$}
 \put (10,0){$#2$}
 \put (45,60){$#3$}
\end{overpic}}\hspace{0.4em}}

\newcommand{\lTgraph}[3]{\raisebox{-0.7em}{
\begin{overpic}[height=2em]{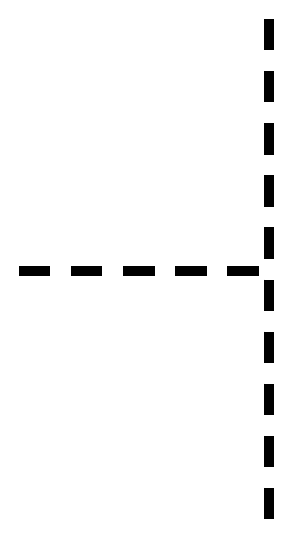}\scriptsize
 \put (55,80){$#1$}
 \put (55,0){$#2$}
 \put (0,65){$#3$}
\end{overpic}}\hspace{0.4em}}

\newcommand{\rPigraph}[4]{\raisebox{-0.7em}{
\begin{overpic}[height=2em]{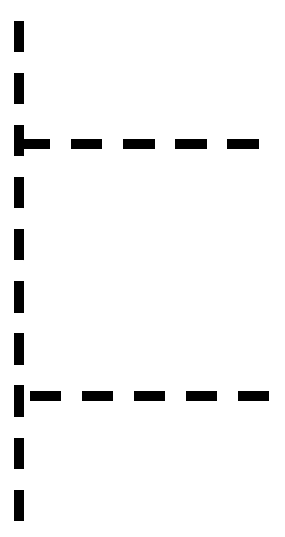}\scriptsize
 \put (5,100){$#1$}
 \put (5,-20){$#2$}
 \put (55,70){$#3$}
 \put (55,10){$#4$}
\end{overpic}}\hspace{0.7em}}

\newcommand{\lPigraph}[4]{\raisebox{-0.7em}{
\begin{overpic}[height=2em]{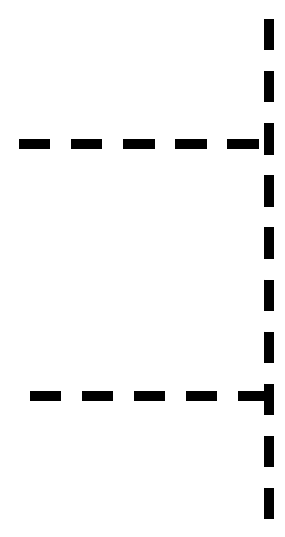}\scriptsize
 \put (60,100){$#1$}
 \put (60,-25){$#2$}
 \put (0,85){$#3$}
 \put (0,-10){$#4$}
\end{overpic}}\hspace{0.8em}}

\newcommand{\Hgraph}[4]{\hspace{0.8em}\raisebox{-0.7em}{
\begin{overpic}[height=2em]{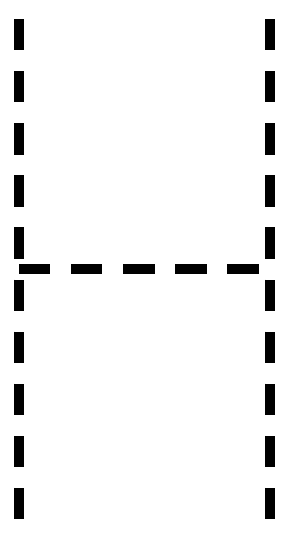}\scriptsize
 \put (-50,80){$#1$}
 \put (60,80){$#2$}
 \put (-50,0){$#3$}
 \put (60,0){$#4$}
\end{overpic}}\hspace{0.6em}}

\newcommand{\lrHgraph}[4]{\raisebox{-0.7em}{
\begin{overpic}[height=2em]{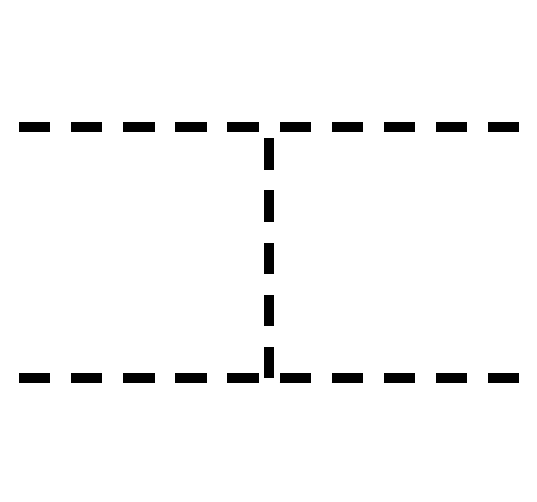}\scriptsize
 \put (-5,80){$#1$}
 \put (80,80){$#2$}
 \put (-5,-15){$#3$}
 \put (80,-15){$#4$}
\end{overpic}}}

\newcommand{\uHgraph}[4]{\raisebox{-0.9em}{
\begin{overpic}[height=2em]{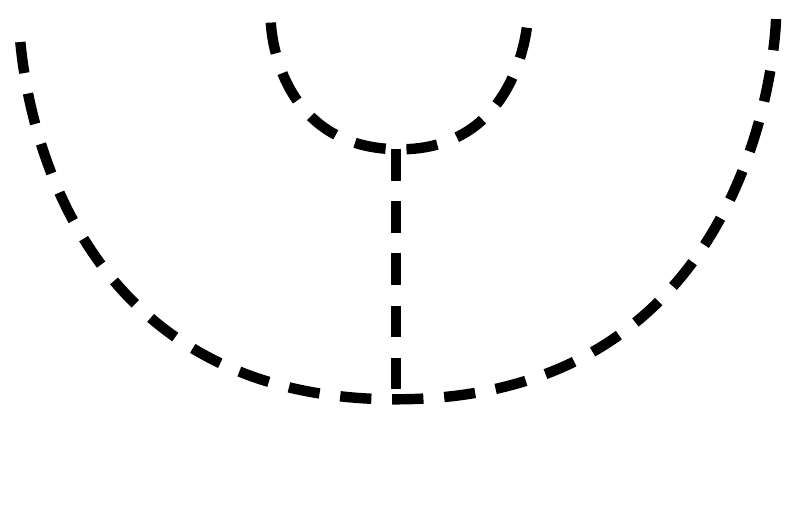}\scriptsize
 \put (-5,70){$#1$}
 \put (25,70){$#2$}
 \put (60,70){$#3$}
 \put (90,70){$#4$}
\end{overpic}}}

\newcommand{\ldHgraph}[4]{\hspace{0.4em}\raisebox{-0.7em}{
\begin{overpic}[height=2em]{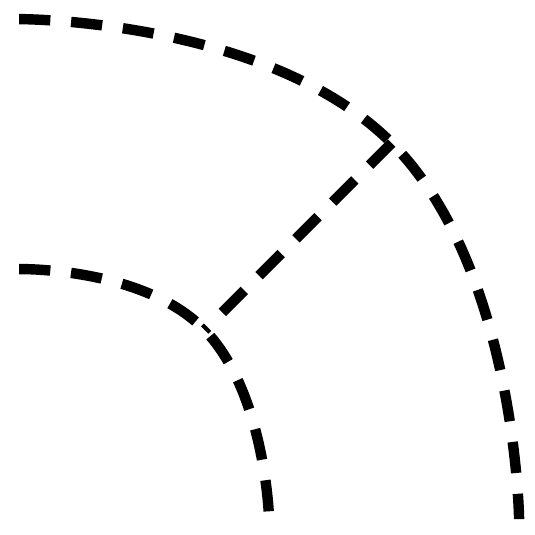}\scriptsize
 \put (30,-30){$#1$}
 \put (80,-30){$#2$}
 \put (-30,90){$#3$}
 \put (-30,45){$#4$}
\end{overpic}}}

\newcommand{\luHgraph}[4]{\hspace{0.4em}\raisebox{-0.7em}{
\begin{overpic}[height=2em]{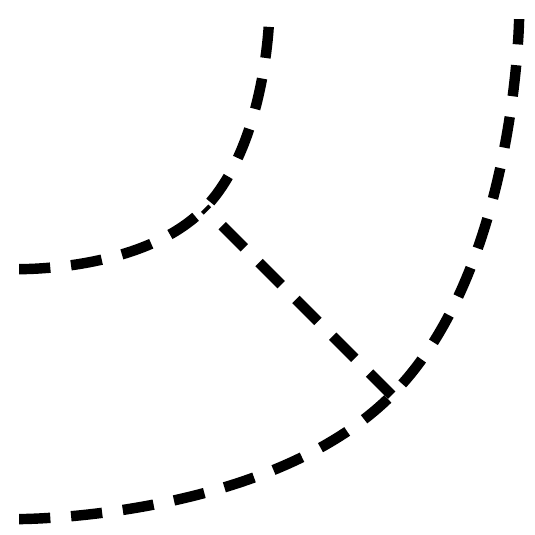}\scriptsize
 \put (30,105){$#1$}
 \put (80,105){$#2$}
 \put (-30,45){$#3$}
 \put (-30,0){$#4$}
\end{overpic}}}

\newcommand{\lambdagraph}[4]{\raisebox{-0.7em}{
\begin{overpic}[height=2em]{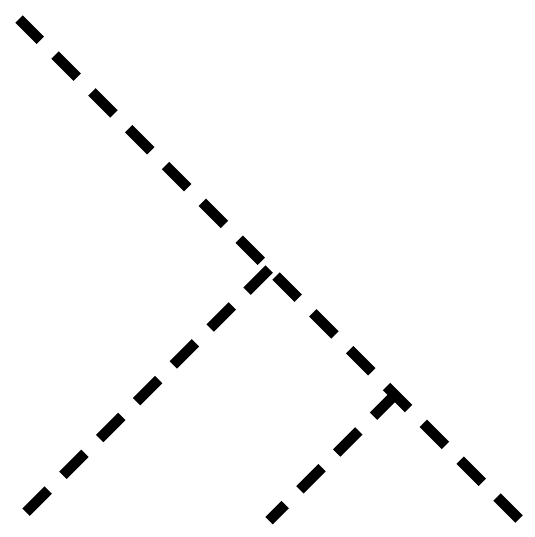}\scriptsize
 \put (-10,105){$#1$}
 \put (-10,-30){$#2$}
 \put (40,-30){$#3$}
 \put (90,-30){$#4$}
\end{overpic}}}

\newcommand{\mlambdagraph}[4]{\raisebox{-0.7em}{
\begin{overpic}[height=2em]{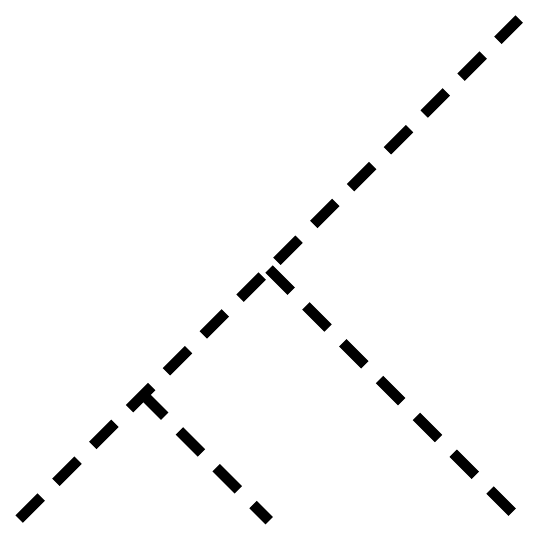}\scriptsize
 \put (90,105){$#1$}
 \put (-10,-30){$#2$}
 \put (40,-30){$#3$}
 \put (90,-30){$#4$}
\end{overpic}}}

\newcommand{\yengraph}[4]{\raisebox{-0.7em}{
\begin{overpic}[height=2em]{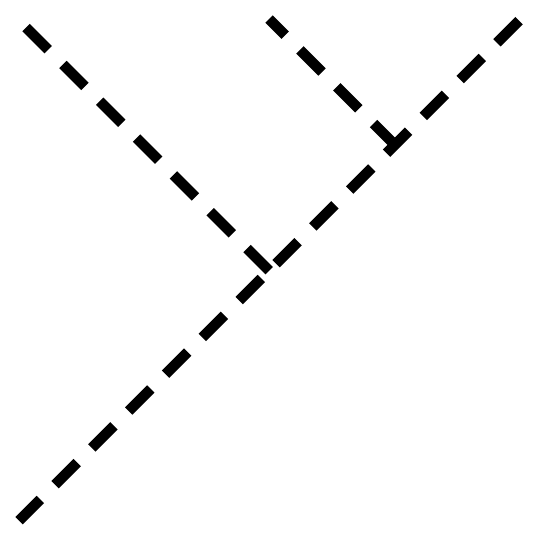}\scriptsize
 \put (-5,105){$#1$}
 \put (45,105){$#2$}
 \put (90,105){$#3$}
 \put (-5,-30){$#4$}
\end{overpic}}}

\newcommand{\myengraph}[4]{\raisebox{-0.7em}{
\begin{overpic}[height=2em]{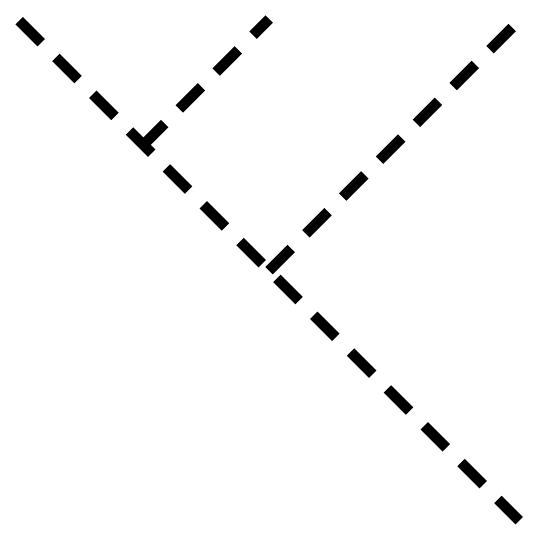}\scriptsize
 \put (-5,105){$#1$}
 \put (45,105){$#2$}
 \put (90,105){$#3$}
 \put (90,-30){$#4$}
\end{overpic}}}

\newcommand{\chairgraph}[4]{\raisebox{-0.7em}{
\begin{overpic}[height=2em]{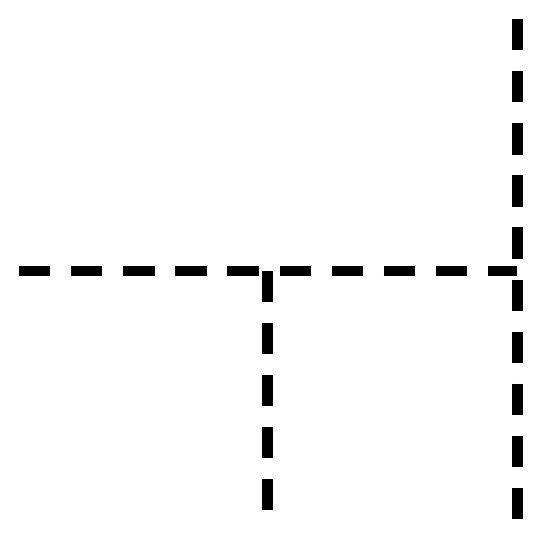}\scriptsize
 \put (80,105){$#1$}
 \put (30,-25){$#2$}
 \put (80,-25){$#3$}
 \put (0,60){$#4$}
\end{overpic}}}

\newcommand{\uchairgraph}[4]{\raisebox{-0.7em}{
\begin{overpic}[height=2em]{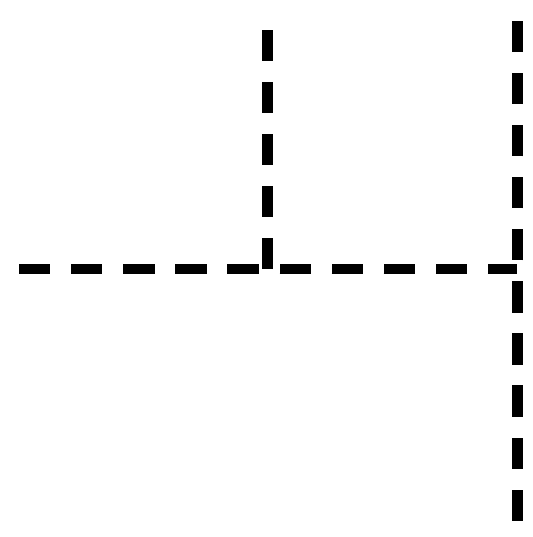}\scriptsize
 \put (40,105){$#1$}
 \put (90,105){$#2$}
 \put (90,-25){$#3$}
 \put (0,60){$#4$}
\end{overpic}}\hspace{0.7em}}

\newcommand{\BCHgraph}[3]{\hspace{-0.2em}\raisebox{-1.7em}{
\begin{overpic}[height=3.8em]{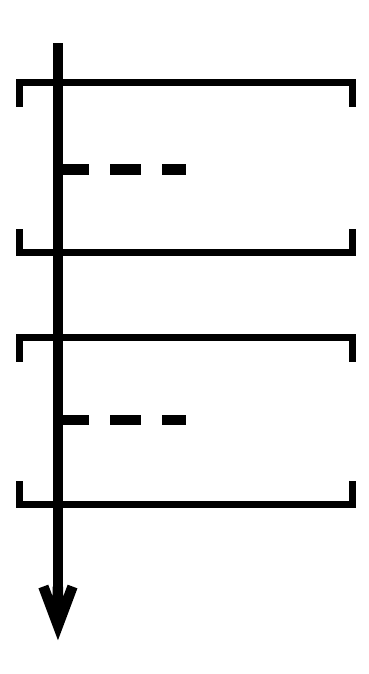}\scriptsize
 \put (33,69){$#1$}
 \put (33,31){$#2$}
 \put (18,3){$#3$}
\end{overpic}}\hspace{0.2em}}

\begin{document}

\title[An extension of the LMO functor]{An extension of the LMO functor}
\author[Y. Nozaki]{Yuta Nozaki}
\subjclass[2010]{57M27, 57M25}
\keywords{LMO functor/invariant, cobordism, tangle, finite-type invariant, Milnor invariant.}
\address{Graduate School of Mathematical Sciences, The University of Tokyo \endgraf
3-8-1 Komaba, Meguro-ku, Tokyo, 153-8914 \endgraf
Japan}
\email{nozaki@ms.u-tokyo.ac.jp}

\maketitle

\begin{abstract}
 Cheptea, Habiro and Massuyeau constructed the LMO functor, which is defined on a certain category of cobordisms between two surfaces with at most one boundary component.
 In this paper, we extend the LMO functor to the case of any number of boundary components, and our functor reflects relations among the parts corresponding to the genera and boundary components of surfaces.
 We also discuss a relationship with finite-type invariants and Milnor invariants.
\end{abstract}

\setcounter{tocdepth}{1} 
\tableofcontents

\section{Introduction}\label{sec:Intro}
In the early 1990s, Kontsevich \cite{Kon93} defined the Kontsevich invariant (the universal finite-type invariant) of knots by the integral on the configuration space of finite distinct points in $\mathbb{C}$.
All rational-valued Vassiliev invariants are recovered from the Kontsevich invariant through weight systems.

In the late 1990s, Ohtsuki \cite{Oht96b} showed that one can consider an arithmetic expansion of the quantum $SO(3)$-invariant of rational homology spheres.
The result of this expansion is called the perturbative $SO(3)$-invariant.
Ohtsuki \cite{Oht96a} also introduced integer-valued finite-type invariants of integral homology spheres.
Kricker and Spence \cite{KrSp97} proved that the coefficients of the perturbative $SO(3)$-invariant are of finite-type. 
On the other hand, the perturbative $SO(3)$-invariant was extended to the perturbative $PG$-invariant for any simply connected simple Lie group $G$, where $PG$ is the quotient Lie group of $G$ by its center.
Moreover, using the Kontsevich invariant, Le, Murakami and Ohtsuki \cite{LMO98} introduced the LMO invariant of connected oriented closed 3-manifolds.
It is known that the LMO invariant is universal among perturbative invariants of rational homology spheres.

Bar-Natan, Garoufalidis, Rozansky and Thurston \cite{BGRT02a,BGRT02b,BGRT04} gave an alternative construction of the LMO invariant of rational homology spheres by introducing the \r{A}rhus integral that is also called the formal Gaussian integral.
In these papers, it is suggested that the \r{A}rhus integral can be extended to an invariant of tangles in a rational homology sphere, which is called the Kontsevich-LMO invariant in \cite{HabN00} and \cite{CHM08}.
Using the Kontsevich-LMO invariant, Cheptea, Habiro and Massuyeau \cite{CHM08} defined the LMO functor as a functorial extension of the LMO invariant.
In fact, the value for a rational homology cube $M$ (in which case the boundary of $M$ is $S^2$) coincides with the LMO invariant of the closed 3-manifold obtained from $S^3\setminus\Int[-1,1]^3$ and $M$ by gluing their boundaries, see \cite[Section~3.5]{CHM08}.
One of the advantage of the LMO functor is that we can use its functoriality to calculate its values and to prove its properties.

\begin{figure}[h]
 \centering
\begin{minipage}{0.45\columnwidth}
 \centering
 \includegraphics[height=6em]{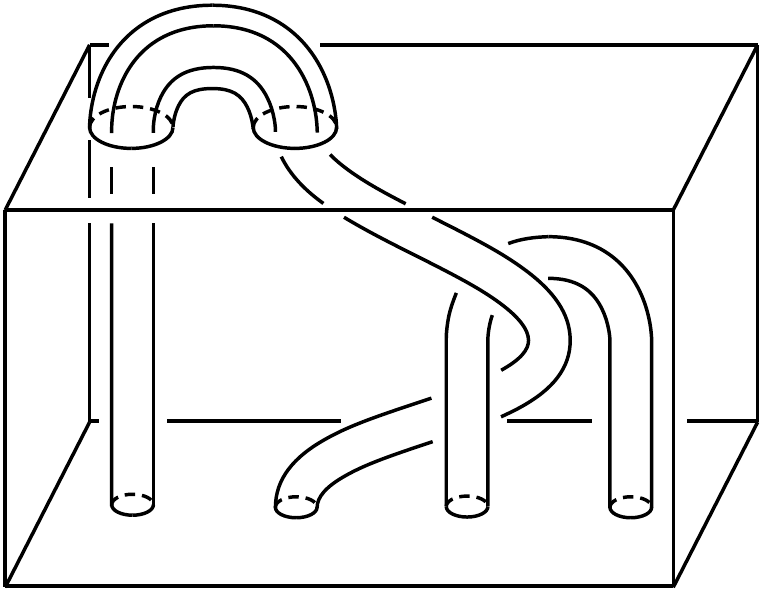}
\end{minipage}
\begin{minipage}{0.45\columnwidth}
 \centering
 \includegraphics[height=6em]{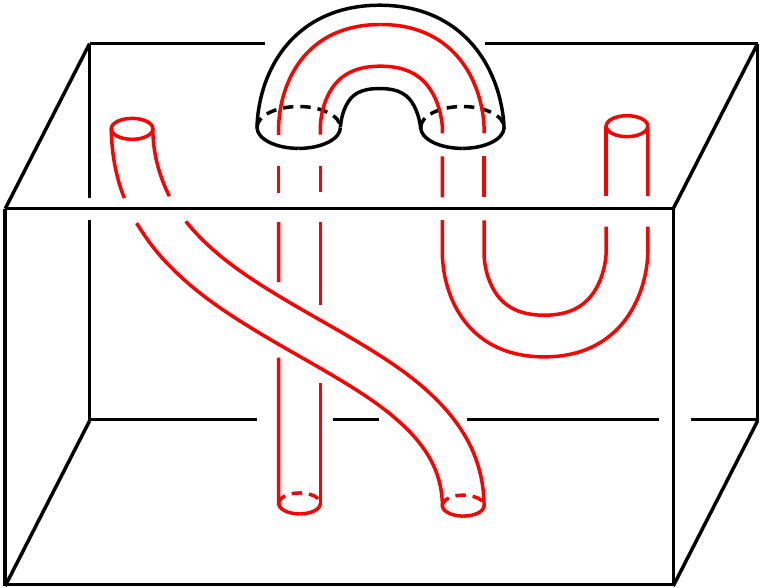}
\end{minipage}
 \caption{A cobordism introduced in \cite{CHM08} (left) and a new one (right) both with parametrizations on their boundaries}
 \label{fig:OldNewCob}
\end{figure}%

The LMO functor is defined for a connected, oriented, compact 3-manifold regarded as a certain cobordism between two surfaces.
Here, these surfaces are assumed to be with at most one boundary component.
The purpose of this paper is to construct an extension of the LMO functor to the case of \emph{any} number of boundary components (compare two figures in Figure~\ref{fig:OldNewCob}).
It is expected that this extension enables us to introduce many \emph{categorical} operations on cobordisms, for instance, which corresponds to the pairing or shelling product defined in \cite{ABMP10}.

We define the braided non-strict monoidal category $\LCob_q$ to be Lagrangian $q$-cobordisms extended as above (see Definition~\ref{subsec:qCob} and Remark~\ref{rem:braided}).
The main result is the following theorem.

\begin{introtheorem}[Theorem~\ref{thm:Zt}]
 There is a tensor-preserving functor $\Zt\colon \LCob_q \to \tsA$ between two monoidal categories, which is an extension of the LMO functor.
\end{introtheorem}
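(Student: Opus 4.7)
The plan is to follow the strategy of Cheptea--Habiro--Massuyeau (CHM) and present each Lagrangian $q$-cobordism as a bottom-top tangle in a cube, then use the Kontsevich integral and formal Gaussian integration (the \r{A}rhus integral) to produce an element of $\tsA$. The novelty compared to \cite{CHM08} is that one has to accommodate the extra boundary components, so the tangle presentation must carry additional strands (and corresponding colors) that encode the circles parameterizing the new components. First I would fix, for each object of $\LCob_q$, a standard handlebody-like thickening together with a distinguished system of arcs joining the handles and the boundary components, so that any morphism in $\LCob_q$ becomes the result of surgery along a framed tangle in the standard model; the extra boundary components contribute extra parallel arcs that are \emph{not} surgered.

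Second, I would define $\Zt$ on a morphism $M\colon \Sigma_{g,n+1}\to\Sigma_{g',n'+1}$ by picking such a bottom-top tangle presentation $T\subset[-1,1]^3$, applying the Kontsevich--LMO invariant $\ZKLMO(T)$, and then integrating formally over the colors corresponding to the surgery components while keeping the boundary-component colors as external legs. The output is naturally a Jacobi diagram with legs labelled by the handles \emph{and} the boundary components of the source and target surfaces, which is exactly the home of the extended category $\tsA$.

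Third comes the well-definedness check. Different tangle presentations of the same cobordism are related by a finite list of moves (isotopy, stabilization by $\pm1$-framed unknots, Kirby-II moves, and the moves that re-route arcs around the new boundary components). Invariance under the moves that already appear in \cite{CHM08} is inherited from the fact that $\ZKLMO$ composed with formal Gaussian integration is insensitive to them; the moves specifically involving the non-surgered boundary arcs only affect the external legs and follow from standard properties of the Kontsevich integral (naturality under tangle concatenation and the STU/IHX relations in $\tsA$). I expect this step, together with verifying that the choice of standard parametrization of each $\Sigma_{g,n+1}$ does not matter, to be the main obstacle, because one must rule out any spurious dependence of the external legs on how the boundary-component arcs were routed across the cube.

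Finally, functoriality $\Zt(M'\circ M)=\Zt(M')\circ\Zt(M)$ is established by stacking the chosen tangle presentations and comparing the two formal Gaussian integrations via Fubini for Gaussian integrals, exactly as in \cite[Section~4]{CHM08}, the new feature being that the boundary-component legs are simply glued end-to-end with no integration. The tensor-preserving property reduces to placing two tangle presentations side by side inside a larger cube, for which both $\ZKLMO$ and formal Gaussian integration are known to be multiplicative. Restricted to cobordisms with at most one boundary component, the construction specialises to the original LMO functor tautologically, so it is an extension as claimed.
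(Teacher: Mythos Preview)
Your outline follows the paper's strategy: present a Lagrangian $q$-cobordism as a bottom-top tangle via the isomorphism $\D$, apply $\ZKLMO$, and normalize. However, you misjudge where the difficulty lies. Well-definedness under Kirby moves and re-routing of arcs is inherited for free from the known invariance of $\ZKLMO$; nothing new happens there, and the paper does not revisit it.

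The real obstacle is functoriality, and your phrase ``the boundary-component legs are simply glued end-to-end with no integration'' hides precisely the point that requires work. In the target category $\tsA$ the $0$-colored legs are neither contracted nor naively glued: the composition law is defined through the map $\chi_{\fc{n}^0}^{-1}\circ\chi_{\fc{n}^0,\fc{n}^{0'}}$, i.e.\ one uses PBW to attach the $0$-colored legs of each factor to separate intervals, stacks those intervals, and then applies $\chi^{-1}$ again. The content of the key Lemma~\ref{lem:key} is to verify that this algebraic operation matches what happens geometrically when the vertical strands of two bottom-top tangles are concatenated; the paper explicitly flags this extra $\chi$-manipulation as the new feature relative to \cite{CHM08}. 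You also omit the check that $\Zt(\Id_w)$ is the identity of $\tsA$, which in the paper is a separate short argument using that $\Zt(\Id_w)$ is group-like and idempotent under the composition of $\tsA$.
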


A generating set of $\LCob_q$ as a monoidal category is determined in Proposition~\ref{prop:Gen} and the values on them are listed in Table~\ref{tab:Value}.
Therefore, the functoriality and tensor-preservingness of $\Zt$ enable us to compute the value on a Lagrangian $q$-cobordism by decomposing it into the generators.
It should be emphasized that there are diagrams colored by both $+$ (or $-$) and $0$ in Table~\ref{tab:Value}, which implies that our extension is non-trivial.

Habiro \cite{HabK00} and Goussarov \cite{GGP01} introduced claspers and clovers respectively that play a crucial role in a theory of finite-type invariants of arbitrary 3-manifolds.
In \cite{CHM08}, using claspers, it was shown that the LMO functor is universal among rational-valued finite-type invariants.
We prove that our LMO functor $\Zt$ has the same property.

\begin{introtheorem}[Theorem~\ref{thm:universal}]
 $\Zt$ is universal among rational-valued finite-type invariants.
\end{introtheorem}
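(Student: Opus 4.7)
The plan is to follow the clasper-calculus strategy used by Cheptea--Habiro--Massuyeau for the original LMO functor \cite{CHM08}, adapting it to the extended category $\LCob_q$. The main ingredients are: (a) the $Y$-filtration (equivalently, the filtration by surgeries along $\Y$-claspers in the sense of Habiro--Goussarov) on the Hom-sets of $\LCob_q$; (b) the internal-degree filtration on $\tsA$; and (c) filtration-preservation by $\Zt$ together with surjectivity of the induced map on associated graded spaces.

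First I would define, for each pair of objects $g,g'$ of $\LCob_q$, the subgroup $F_n\LCob_q(g,g')$ generated (in the free abelian group on cobordisms) by alternating sums over surgeries along disjoint $\Y$-graph claspers of total Vassiliev degree $\geq n$. A $\Q$-valued invariant $f$ is of finite type of degree $<n$ if and only if it vanishes on $F_n\LCob_q(g,g')$. Next, I would check that $\Zt$ is filtration-preserving: the image of $F_n\LCob_q(g,g')$ lies in the part of $\tsA(g,g')$ of internal degree $\geq n$. By functoriality and tensor-preservation (Theorem~\ref{thm:Zt}), this reduces to computing $\Zt$ on a cobordism carrying a single $\Y$-clasper of degree $n$; here I invoke the corresponding computation in \cite{CHM08}, applied locally inside a cube, which is unchanged by the presence of additional boundary components of the ambient surfaces.

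The heart of the argument is to identify the associated graded map $\Gr\Zt$ with a surjection onto the space of connected Jacobi diagrams of internal degree $n$. Here one uses Habiro's realization lemma to produce, for each such Jacobi diagram $D$, an explicit $\Y$-clasper $C_D$ in the trivial cobordism whose surgery has leading term $D$ under $\Zt$. This allows one to conclude by induction on $n$ that every element of $\tsA(g,g')_{\leq n}$ is hit, modulo higher-degree corrections, by $\Zt$ on degree-$\leq n$ combinations of cobordisms. The standard duality between finite-type invariants and the associated graded of the clasper filtration then yields that any rational-valued finite-type invariant factors through $\Zt$, establishing universality.

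The main obstacle is Step~3, the surjectivity of $\Gr\Zt$ in the presence of multiple boundary components. The subtlety is that the new generators listed in Table~\ref{tab:Value} carry labels from both $\{+,-\}$ and $\{0\}$, so one must verify that $\Y$-claspers supported in cobordisms involving $0$-colored boundary pieces still produce the full space of primitive Jacobi diagrams decorated by the new color set. I expect this to follow by localizing the realization lemma inside a standard handlebody piece of the cobordism (where the argument of \cite{CHM08} applies verbatim) and using the tensor structure to spread the resulting diagrams across all color types; the key check is that no new relations are forced on $\Gr\Zt$ by the extended Lagrangian condition beyond those already present in $\tsA$.
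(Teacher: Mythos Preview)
Your outline is essentially the paper's proof: define the $Y$-filtration, use functoriality to split off copies of the generator $Y$ and thereby show $\Zt$ preserves the filtration, and then compute that the surgery map $\SS$ followed by $\Gr\Zt^Y$ returns (up to sign) the original Jacobi diagram. The paper makes the last step explicit as $\Gr\Zt^Y\circ\SS(D)=(-1)^{i+c+e}D$ and combines it with Garoufalidis's surjectivity of $\SS$ (Theorem~\ref{thm:Gar02}) to conclude that both $\SS$ and $\Gr\Zt^Y$ are isomorphisms. One point to tighten: you frame the goal as \emph{surjectivity} of $\Gr\Zt$, but surjectivity alone does not yield universality; what you need is \emph{injectivity} of $\Gr\Zt$, so that every finite-type invariant (which factors through $\G(\M^0)$) is recovered from $\Zt$. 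Your ``leading term $D$'' computation is exactly $\Gr\Zt^Y\circ\SS=\pm\Id$, but you must also invoke that the classes $[M,C_D]$ span $\G_i(\M^0)$---this is precisely the surjectivity of $\SS$ cited from \cite{Gar02}, and it is what upgrades your computation to an isomorphism.

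Your anticipated obstacle with $0$-coloured labels does not arise in the paper's argument. Rather than localising inside a handlebody piece and spreading via the tensor structure, the paper writes the surgered cobordism globally as $(\Id_v\otimes Y^{\otimes i})\circ R$, reads off the linking matrix $\Lk(R)$ with its $\pm/0$ blocks, and applies the composition formula Lemma~\ref{lem:Composition}. The $0$-coloured leaves enter only through the block $B^{-0}$, and all three colour types are handled uniformly in this linear-algebraic step; no new relations are forced by the extended Lagrangian condition.
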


In \cite{Mil54}, Milnor defined an invariant of links, which is called the Milnor $\overline{\mu}$-invariant.
Habegger and Lin \cite{HaLi90} showed that this invariant is essentially an invariant of string links.
In \cite{CHM08}, it was proven that the tree reduction of the LMO functor is related to Milnor invariants of string links.
By extending the Milnor-Johnson correspondence (see, for example, \cite{HabN00}) suitably, we show that the same is true for $\Zt$. 

\begin{introtheorem}[Theorem~\ref{thm:MilnorInv}]
 Let $(B,\sigma)$ be a string link in an integral homology sphere $B$.
 Then the first non-trivial term of $\Zt^{Y,t}(\MJ_w^{-1}(B,\sigma))$ is determined by the first non-vanishing Milnor invariant of $(B,\sigma)$, and vice versa.
\end{introtheorem}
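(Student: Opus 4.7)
The plan is to reduce the statement to the corresponding result proved in \cite{CHM08} for the original LMO functor, by establishing a compatibility between the extended Milnor--Johnson correspondence $\MJ_w$ and the tree reduction $\Zt^{Y,t}$ of our extended functor. The ``forward'' implication (Milnor invariants are read off from $\Zt^{Y,t}$) then follows from the corresponding statement of \cite{CHM08} applied to a suitable restriction of our data, while the ``vice versa'' implication uses universality of $\Zt$ from Theorem~\ref{thm:universal}.

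First I would set up the extended Milnor--Johnson correspondence. Given a string link $(B,\sigma)$ in an integral homology sphere, the classical correspondence produces a homology cobordism between surfaces with at most one boundary component. The extension $\MJ_w$ should instead produce a Lagrangian $q$-cobordism in $\LCob_q$ whose boundary surfaces have several boundary components, one recording each string component of $\sigma$; the marking $w$ encodes the homotopy data needed to make the assignment well-defined. I would verify that $\MJ_w$ is natural with respect to stacking and tensor products so that, by Proposition~\ref{prop:Gen}, $\MJ_w^{-1}(B,\sigma)$ admits an explicit decomposition into generators of $\LCob_q$ whose $\Zt$-values are known from Table~\ref{tab:Value}.

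Next I would relate $\Zt^{Y,t}$ to the tree reduction used in \cite{CHM08}. The idea is to introduce a ``boundary-closing'' comparison morphism from $\tsA$ to the target category of the original LMO functor that sums over the extra boundary legs, and to show that it intertwines $\Zt^{Y,t}$ with the CHM tree reduction on the image of $\MJ_w$. Combined with the fact that $\Zt$ restricts to the original LMO functor on cobordisms whose boundary surfaces have at most one component, this gives a direct dictionary between the first non-trivial tree term of $\Zt^{Y,t}(\MJ_w^{-1}(B,\sigma))$ and the first non-trivial tree term computed in \cite{CHM08}, which by their result is governed by the first non-vanishing Milnor $\overline{\mu}$-invariant.

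The main obstacle will be controlling the additional tree diagrams produced by the colors $0$ appearing in Table~\ref{tab:Value} — precisely the ingredient that makes the extension of the LMO functor non-trivial. One must show that these extra legs do not create a tree diagram of degree strictly smaller than the first non-vanishing Milnor degree. I expect this to follow from a degree argument using the Lagrangian hypothesis satisfied by $\MJ_w^{-1}(B,\sigma)$, which forces the lowest-degree non-trivial contribution to be concentrated in the ``string-link part'' of the coloring, together with the low-degree analysis of the generators in Table~\ref{tab:Value} showing that $0$-colored legs only contribute at strictly positive strut-type degree. Once this is settled, the converse implication is immediate: the first non-vanishing Milnor invariant, being a finite-type invariant of string links, is captured by $\Zt$ thanks to Theorem~\ref{thm:universal}, and the injectivity of the tree part at the first non-vanishing degree (again established in \cite{CHM08}) ensures that the Milnor invariant is determined by the first non-trivial term of $\Zt^{Y,t}$.
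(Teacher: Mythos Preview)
Your overall instinct---reduce to a previously known identification of Milnor invariants inside a tree-reduced LMO-type invariant via a comparison map---matches the paper's shape, but the concrete mechanism you propose does not work, and the reduction target is off.

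\medskip

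\textbf{Where the approach diverges.} The paper does \emph{not} reduce to the cobordism statement of \cite{CHM08}. It reduces to Moffatt's theorem \cite[Theorem~3]{Mof06}, which identifies the first non-trivial tree term of $\chi^{-1}\ZKLMO(B,\sigma)$ of the \emph{string link} with $\mu_k^{\A}(B,\sigma)$. The bridge is Lemma~\ref{lem:Psi}: a linear map
\[
\Psi\colon \A(\curvearrowru^{\fc{g}^+}\curvearrowleft^{\fc{g}^-}\downarrow^{\fc{n}^0}) \longrightarrow \A(\downarrow^{\fc{2g+n}^\ast})
\]
built by sandwiching with the Kontsevich invariants of the explicit top and bottom tangles in the geometric description of $\MJ_w$ (Figure~\ref{fig:Psi}), together with an orientation-reversal on the $(\fc{g}^-\cup\fc{n}^0)$-strands. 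Its key property is $\Psi\bigl(Z(\D^{-1}M)\bigr)=\ZKLMO(\MJ_w(M))$ and, crucially, $\chi^{-1}\Psi(E)=[\text{struts}]\sqcup R_w(\widetilde E)+(\ifil>\ideg E)$. The proof then computes $\chi^{-1}\Psi\chi\bigl(\Zt(M)\circ\TT_g^{-1}\bigr)$ two ways---once by Lemma~\ref{lem:Composition} and Lemma~\ref{lem:Psi}, once by Moffatt---and equates tree parts in i-degree $k-1$, by induction on $k$.

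\medskip

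\textbf{Why your plan has a genuine gap.} Your ``boundary-closing comparison morphism from $\tsA$ that sums over the extra boundary legs'' is not well-defined and cannot do the job. The only natural such map in the paper (Remark after Theorem~\ref{thm:Zt}) is $(-/i^0\mapsto 0)$, which \emph{kills} all $0$-colored information; you would then be comparing with the \cite{CHM08} statement for a string link on $2g$ strands, not $2g+n$, and the Milnor data carried by the $0$-colors is lost irretrievably. The correct comparison goes the other way: one keeps all colors and relabels via the bijection $\rho_w$ and the signed map $R_w$, landing in $\A(\fc{2g+n}^\ast)$ where Moffatt's theorem applies to the full string link. Your worry about ``extra $0$-colored trees of too-low degree'' is handled not by a Lagrangian argument or by Table~\ref{tab:Value}, but by the i-filter estimate in Lemma~\ref{lem:Psi} (an instance of Lemma~\ref{lem:filter}) together with the inductive form $\Zt(M)=[I_g^{+-}]\sqcup(\varnothing+x+\cdots)$. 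Finally, decomposing $\MJ_w^{-1}(B,\sigma)$ into generators and invoking Table~\ref{tab:Value} cannot prove a statement valid for all $k$: the table is only computed modulo $\ideg>2$. For the converse, the paper does not use universality (Theorem~\ref{thm:universal}); it follows immediately from the forward direction and Lemma~\ref{lem:mu}.
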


Finally, in \cite{ChLe06}, \cite{ABMP10} and \cite{Kat14}, one can find related researches from different points of view from this paper.

\subsection*{Organization of this paper}
In Section~\ref{sec:CobBTtangle}, we define cobordisms and bottom-top tangles that are main object in this paper.
Section~\ref{sec:JacobiDiagOp} is devoted to reviewing Jacobi diagrams and the formal Gaussian integral.
In Section~\ref{sec:KontsevichLMOinv}, the Kontsevich-LMO invariant of tangles in a homology cube is explained, which plays a key role in the subsequent sections.
The main part of this paper is Section~\ref{sec:ExtLMOfunc}, where we construct an extension of the LMO functor.

In Section~\ref{sec:GenValue}, we shall give a generating set of the category $\LCob$ and calculate the values on them.
These values will be used later.
Section~\ref{sec:Universal} is devoted to reviewing clasper calculus and proving the universality among finite-type invariants.
Finally, in Section~\ref{sec:Knot}, we apply our LMO functor $\Zt$ to some cobordisms arising from knots or string links.
In particular, the relationship between $\Zt$ and Milnor invariants of string links will be discussed.

\subsection*{Notation}
Throughout this paper, we denote by $H_\ast(X)$ the homology groups of a topological space $X$ with coefficients in $\Z$.
(However, arguments in Sections~\ref{sec:CobBTtangle}--\ref{sec:ExtLMOfunc} are valid for the case of coefficients in $\Q$, see Remark~\ref{rem:Rational}.)
Let $\fc{n}^\ast$ denote the set $\{1^\ast,\dots,n^\ast\}$, where $\ast$ is $+,\ -,\ 0$ or we omit $\ast$.
Let $X$ and $Y$ be geometric objects equipped with the ``top'' and ``bottom'', for example, cobordisms or tangles.
Then, the composition of $X$ and $Y$ is always defined by stacking the bottom of $Y$ on the top of $X$.

We use almost the same notation and terminology as in \cite{CHM08}.
However, their definitions are suitably extended.

\subsection*{Acknowledgements}
The author would like to thank Takuya Sakasai, who provided him with helpful comments and suggestions.
He also feels thankful to Kazuo Habiro, who helped him to prove Proposition~\ref{prop:Gen}.
He would also like to thank Gw\'ena\"el Massuyeau for pointing out many crucial mistakes in his first draft and recommending him to introduce the functor $\K$ in Remark~\ref{rem:Kill}.
Finally, this work was supported by the Program for Leading Graduate Schools, MEXT, Japan.

\section{Cobordisms and bottom-top tangles}\label{sec:CobBTtangle}
In this section, we give the definition of cobordisms and a way to express them as certain tangles.

\subsection{Cobordisms}\label{subsec:Cob}
We first introduce some notation.
Let $\Mon(x_1,\dots,x_k)$ denote the free monoid generated by letters $x_1,\dots,x_k$.
Similarly, we denote by $\Mag(x_1,\dots,x_k)$ the free magma generated by $x_1,\dots,x_k$.
We call an element of $\Mon(x_1,\dots,x_k)$ or $\Mag(x_1,\dots,x_k)$ a \emph{word}.
Let $w$ be a word.
We denote by $w^{x_i}$ the element obtained from $w$ by replacing all letters except for $x_i$ with the empty word $e$.
Let $|w|$ denote the word length of $w$.
For example, if $w=\bullet((\circ\bullet)\bullet) \in \Mag(\bullet,\circ)$, then $|w^\bullet|=|\bullet(\bullet\bullet)|=3$.

Next, we prepare two kinds of surfaces.
Let $w \in \Mon(\bullet,\circ)$ and let $g=|w^\bullet|$, $n=|w^\circ|$.
We define the oriented compact surface $F_w$ in $\R^3$ as illustrated in Figure~\ref{fig:F}, which has handles and inner boundary components corresponding to letters $\bullet$ and $\circ$ in $w$, respectively.
Moreover, let $\alpha_1,\dots,\alpha_g$, $\beta_1,\dots,\beta_g$ and $\delta_1,\dots,\delta_n$ denote the oriented simple closed curves based at $\ast$ as drawn in Figure~\ref{fig:F}.
We often regard the above closed curves as free loops.

\begin{figure}[h]
 \centering
 \includegraphics[width=0.6\columnwidth]{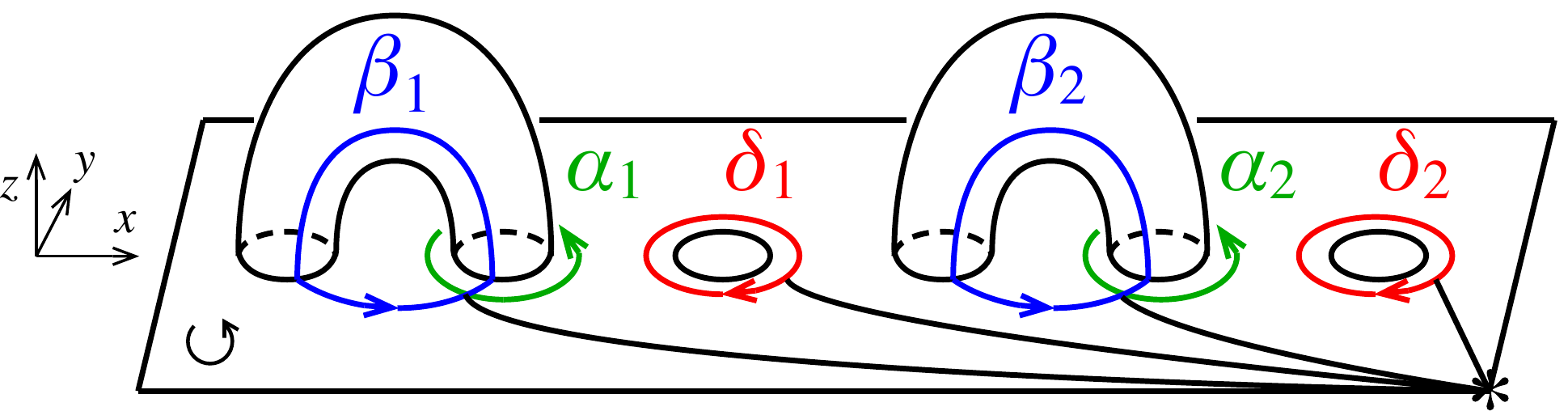}
 \caption{An example of $F_w$ with $w=\bullet\circ\bullet\:\circ$} 
 \label{fig:F}
\end{figure}

Let $w_{+},w_{-} \in \Mag(\bullet,\circ)$ such that $|w_{+}^\circ|=|w_{-}^\circ|=:n$, and let $\sigma$ be an element of the $n$th symmetric group $\SS_n$.
We define the \emph{reference surface} $R^{w_+}_{w_-,\sigma}$ as illustrated in Figure~\ref{fig:R}, which is an oriented closed surface consisting of four kinds of surfaces: the \emph{top surface} $F_{w_+}$, the \emph{bottom surface} $-F_{w_-}$, $n$ tubes and four sides, where $-F_{w_-}$ is the surface $F_{w_-}$ with the opposite orientation.
The boundary of the tubes are attached to the inner boundary of the top surface and bottom surface according to the permutation $\sigma$.

\begin{figure}[h]
 \centering
 \includegraphics[width=0.6\columnwidth]{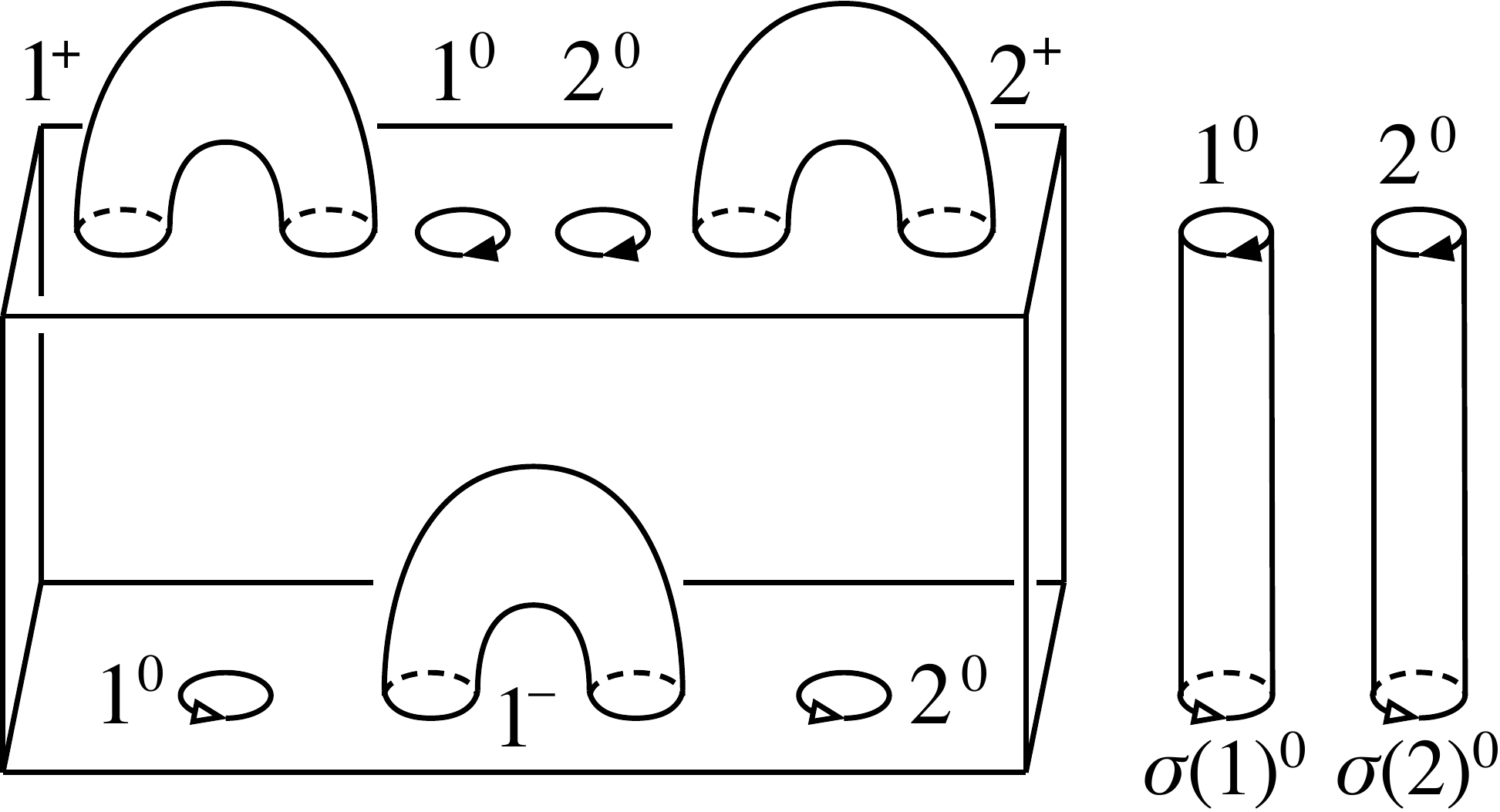}
 \caption{An example of $R^{w_+}_{w_-,\sigma}$ with $w_{+}=\bullet\circ\circ\:\bullet$, $w_{-}=\circ\bullet\circ$}
 \label{fig:R}
\end{figure}

\begin{definition}
 Let $w_+, w_- \in \Mon(\bullet,\circ)$ such that $|w_+^\circ|=|w_-^\circ|=:n$.
 A \emph{cobordism} from $F_{w_+}$ to $F_{w_-}$ is an equivalence class of triples $(M,\sigma,m)$, where
\begin{itemize}
 \item $M$ is a connected, oriented, compact 3-manifold,
 \item $\sigma$ is an element of the $n$th symmetric group $\SS_{n}$,
 \item $m\colon R^{w_+}_{w_-,\sigma} \to \partial M$ is an orientation-preserving homeomorphism.
\end{itemize}
 Here, $(M,\sigma,m)$ is equivalent to $(N,\tau,n)$ if $\sigma=\tau$ and there is an orientation-preserving homeomorphism $f\colon M \to N$ satisfying $f|_{\partial M}\circ m=n$.
 
 Let $m_\pm$ denote the restriction of $m$ to $\pm F_{w_\pm}$.
\end{definition}

We define the category $\Cob$ of cobordisms as follows: 
The set of objects is the monoid $\Mon(\bullet,\circ)$.
A morphism from $w_+$ to $w_-$ is a cobordism from $F_{w_+}$ to $F_{w_-}$.
The composition of $(M,\sigma,m) \in\Cob(v,u)$ and $(N,\tau,n) \in \Cob(w,v)$ is defined by
\[(M,\sigma,m)\circ(N,\tau,n) := (M \cup_{m_+\circ n_-^{-1}} N, \sigma\tau, m_{-} \cup n_{+}) \in \Cob(w,u),\]
namely, the composition is obtained by stacking the bottom of $N$ on the top of $M$.
The identity of $w$ is the cobordism $(F_w\times[-1,1],\Id_{\SS_{|w^\circ|}},\Id)$, where the last $\Id$ means the obvious parametrization.

Moreover, $\Cob$ is a strict monoidal category, where the tensor product of two cobordisms is their horizontal juxtaposition and the unit is the empty word $e$.

\begin{remark}\label{rem:Kill}
 We denote by $\Cob^\bullet$ the full subcategory whose objects belong to the monoid $\Mon(\bullet)$, namely, $\Cob^\bullet$ is the category $\Cob$ defined in \cite{CHM08}.
 The full functor $\K\colon \Cob \to \Cob^\bullet$ is defined to be killing the extra boundary, that is, for $M=(M,\sigma,m)\in \Cob(w_+,w_-)$, $\K(M)$ is obtained by attaching $|w_{+}^\circ|$ tubes to $M$.
\end{remark}

\begin{definition}
 Let $w \in \Mon(\bullet,\circ)$ such that $|w^\circ|=n$, and let $\ast_1,\dots,\ast_n$ be points on each inner boundary component.
 We recall that $\ast$ is the point on the outer boundary as drawn in Figure~\ref{fig:F}.
 The \emph{mapping class group} of $F_w$ is defined by
\begin{align*}
 \M(F_w) &:= \left\{h \in \Homeo_{+}(F_w) \mathrel{}\middle|\mathrel{} \parbox{13em}{$h(\ast)=\ast, \\ h(\{\ast_1,\dots,\ast_b\})=\{\ast_1,\dots,\ast_b\}$} \right\} \bigg/ \text{isotopy},
\end{align*}
 where an isotopy fixes the set $\{\ast,\ast_1,\dots,\ast_n\}$ pointwise.
 For any $h \in \M(F_w)$, we denote by $\C(h)$ the \emph{mapping cylinder} of $h$, that is, the cobordism
 \[(F_w\times[-1,1], \pi(h), \Id\times(-1)\cup h\times 1),\]
 where $\pi(h) \in \SS_n$ is defined by $h(\ast_i)=\ast_{\pi(h)(i)}$.
 Note that an isotopy does not necessarily fix $\partial F_w$, however, by definition, $\C(h)$ is well-defined.
 One can show that the map
 \[\C\colon \M(F_w) \to \Cob(w,w)\]
 is an injective monoid homomorphism.
\end{definition}

It is obvious that the image of $\C$ is contained in the group consisting of invertible elements of $\Cob(w,w)$.
In fact, the following proposition holds.

\begin{proposition}\label{prop:InvertElem}
 Let $w \in \Mon(\bullet,\circ)$ and $M \in \Cob(w,w)$.
 Then $M$ is left-invertible if and only if it is right-invertible.
 Moreover, the group $\Cob(w,w)^\times$ consisting of invertible elements of $\Cob(w,w)$ is equal to $\Im \C$.
\end{proposition}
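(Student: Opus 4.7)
The plan is to reduce both assertions to a single topological claim: every left-invertible cobordism lies in $\Im \C$. The inclusion $\Im \C \subseteq \Cob(w,w)^\times$ is immediate from $\C$ being an injective monoid homomorphism into $\Cob(w,w)$ and $\M(F_w)$ being a group. Granting the key claim, if $M$ is right-invertible with $M \circ R = \Id_w$, then $R$ is left-invertible (witnessed by $M$), so $R = \C(h)$ for some $h$; the two-sided invertibility of $\C(h)$ forces $M = \C(h^{-1}) \in \Im \C$. Together with the key claim, this yields both the equivalence of left and right invertibility and the identity $\Cob(w,w)^\times = \Im \C$.

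To prove the key claim, suppose $L \circ M = \Id_w$. Unpacking the composition and identity cobordisms yields an orientation-preserving homeomorphism
\[
\phi\colon L \cup M \xrightarrow{\;\cong\;} F_w \times [-1,1]
\]
compatible with the prescribed boundary parametrizations, where the union is taken along the shared $F_w$-face in $\partial L$ and $\partial M$. Then $\phi(M)$ is a sub-3-manifold of the cylinder containing one horizontal face and separated from $\phi(L)$ by the properly embedded surface $S := \phi(m_+(F_w))$, of genus $|w^\bullet|$ with $|w^\circ|+1$ boundary circles lying on $\partial F_w \times [-1,1]$. The crux of the argument is to show that $S$ is ambient isotopic rel $\partial$ to a horizontal slice $F_w \times \{t\}$. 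Granting this, an ambient isotopy of the cylinder carries $\phi(M)$ onto $F_w \times [t,1]$, and the restrictions of $\phi$ to the top and bottom surfaces of $M$ then determine a self-homeomorphism $h$ of $F_w$ preserving the marked points setwise, i.e., an element $h \in \M(F_w)$. A direct unwinding of the definitions of $\C$ and the equivalence relation on cobordisms shows $M \cong \C(h)$.

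The isotopy of $S$ to a horizontal slice is the main technical obstacle. I would first verify that $S$ is incompressible and $\partial$-incompressible in $F_w \times [-1,1]$: both inclusions $F_w \hookrightarrow M$ and $F_w \hookrightarrow L$ (as top and bottom surfaces) are $\pi_1$-injective because each admits a retraction through the cylinder to the opposite face, which in turn forces injectivity of $\pi_1(S) \to \pi_1(F_w \times [-1,1])$. Since the cylinder is irreducible with incompressible boundary, Waldhausen's classification of incompressible surfaces in product Haken 3-manifolds then yields the desired isotopy rel boundary. In the setting of this paper, some extra care is needed to track how the new $\circ$-type boundary components and their corresponding tubes interact with the side/tube structure of $\partial(F_w \times [-1,1])$, but no topological input beyond that used in the $\bullet$-only case of \cite{CHM08} is required.
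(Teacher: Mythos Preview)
Your overall strategy coincides with the paper's: the paper does not give an independent argument but simply asserts that ``the same proof as \cite[Proposition~2.4]{HaMa12} is valid'' in the present setting, and that proof is precisely the Waldhausen-type argument you sketch (embed the gluing surface $S$ in $F_w\times[-1,1]$, show it is incompressible and $\partial$-incompressible, then straighten it to a horizontal slice). Your reduction of both assertions to the single claim ``left-invertible $\Rightarrow$ $M\in\Im\C$'' is also the standard one. Note that the relevant reference for the $\bullet$-only case is \cite{HaMa12}, not \cite{CHM08}.

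There is one genuine soft spot in your write-up. The sentence ``both inclusions $F_w\hookrightarrow M$ and $F_w\hookrightarrow L$ \dots\ are $\pi_1$-injective because each admits a retraction through the cylinder to the opposite face, which in turn forces injectivity of $\pi_1(S)\to\pi_1(F_w\times[-1,1])$'' does not do what you need. The retraction of the cylinder onto an end face shows that the \emph{outer} faces $F_w\times\{\pm1\}$ include $\pi_1$-injectively into $\phi(M)$ and $\phi(L)$; it says nothing directly about $S$, which is the \emph{inner} face of each piece. One clean fix: since $S$ separates $F_w\times\{1\}$ from $F_w\times\{-1\}$, every vertical arc $\{p\}\times[-1,1]$ meets $S$ algebraically once, so the projection $S\to F_w$ is a proper degree-one map between compact surfaces of equal Euler characteristic; hence it is $\pi_1$-surjective, and since $\pi_1(F_w)$ is free (thus Hopfian) it is a $\pi_1$-isomorphism. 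With incompressibility of $S$ established this way, the rest of your outline (including the remark that the $\circ$-tubes require only bookkeeping) goes through.
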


The definition of the mapping class group is different from \cite{HaMa12}, although the same proof as \cite[Proposition~2.4]{HaMa12} is valid for our monoid homomorphism $\C$.

\begin{remark}
 The mapping class group of $F_w$ is usually defined by
 \[\M_0(F_w) := \{f\in\Homeo(F_w) \mid f|_{\partial F_w}=\Id_{\partial F_w}\} \mathrel{/} \text{isotopy rel $\partial F_w$},\] 
 which is isomorphic to the kernel of the homomorphism $\pi\colon \M(F_w) \twoheadrightarrow \SS_n$.
 If $w \in \Mon(\circ)$, then $\M(F_w)$ and $\M_0(F_w)$ are called the \emph{framed braid group} and the \emph{framed pure braid group} on $n$ strands, respectively.
\end{remark}

We should restrict cobordisms to the good ones in order to define the Kontsevich-LMO invariant (see Section~\ref{subsec:DefKontsevichLMOinv}) and to let it take value in the space of top-substantial Jacobi diagrams (see Section~\ref{subsec:TopSubstantial}).
Let $w \in \Mon(\bullet,\circ)$.
We define the subgroups of $H_1(F_w)$ by
\[A_w:=\ang{\alpha_1,\dots,\alpha_{|w^\bullet|}},\quad B_w:=\ang{\beta_1,\dots,\beta_{|w^\bullet|}},\quad D_w:=\ang{\delta_1,\dots,\delta_{|w^\circ|}}.\]

\begin{definition}
 A cobordism $(M,\sigma,m) \in \Cob(w_+,w_-)$ is \emph{Lagrangian} if the following two conditions are satisfied:
\begin{enumerate}
 \item $H_1(M) = m_{-,\ast}(A_{w_-}) + m_{+,\ast}(H_1(F_{w_+}))$,
 \item $m_{+,\ast}(A_{w_+}) \subset m_{-,\ast}(A_{w_-}) + m_{+,\ast}(D_{w_+})$.
\end{enumerate}
\end{definition}

It follows from a Mayer-Vietoris argument that the composition of two Lagrangian cobordisms is again Lagrangian.
(One can get an alternative proof by Lemma \ref{lem:Lagrangian} and \cite[Claim 3.18]{CHM08} or the fact that $M$ is Lagrangian if and only if $\K(M)$ is Lagrangian in the sense of \cite{CHM08}.)
Thus, we obtain the wide subcategory $\LCob$ whose morphisms are Lagrangian cobordisms.
Actually, we need a non-strict monoidal category $\LCob_q$ defined in Section~\ref{subsec:qCob}.

\begin{lemma}\label{1'}
 Under the condition \textup{(2)}, \textup{(1)} is equivalent to the following condition: \textup{(1')} the map
 \[m_{-,\ast}\oplus m_{+,\ast}\colon A_{w_-}\oplus(B_{w_+}\oplus D_{w_+}) \to H_1(M)\]
 is an isomorphism.
\end{lemma}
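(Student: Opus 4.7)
The trivial direction $(1') \Rightarrow (1)$ is immediate from $B_{w_+} \oplus D_{w_+} \subseteq H_1(F_{w_+})$, so I would focus on $(1) \land (2) \Rightarrow (1')$, which I would split into surjectivity of the map $\phi := m_{-,\ast} \oplus m_{+,\ast}\colon A_{w_-} \oplus (B_{w_+} \oplus D_{w_+}) \to H_1(M)$ and an injectivity argument via rank comparison.

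Surjectivity is almost formal: decomposing $H_1(F_{w_+}) = A_{w_+} \oplus B_{w_+} \oplus D_{w_+}$ and using (2) to absorb $m_{+,\ast}(A_{w_+})$ into $m_{-,\ast}(A_{w_-}) + m_{+,\ast}(D_{w_+})$ rewrites (1) as
\[H_1(M) = m_{-,\ast}(A_{w_-}) + m_{+,\ast}(B_{w_+}) + m_{+,\ast}(D_{w_+}) = \Im \phi.\]
For injectivity I would match ranks. Set $g_\pm := |w_\pm^\bullet|$, $n := |w_\pm^\circ|$, and $g := g_+ + g_- + n$. The source of $\phi$ is free of rank $g$, so surjectivity forces $\rank_\Z H_1(M) \leq g$. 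For the reverse inequality, observe that $\partial M$ is homeomorphic to the connected closed oriented surface $R^{w_+}_{w_-,\sigma}$, whose Euler characteristic
\[\chi(R^{w_+}_{w_-,\sigma}) = \chi(F_{w_+}) + \chi(F_{w_-}) = 2 - 2g\]
(the $n$ inner tubes and the outer cylinder all contribute $0$) reveals genus $g$. By the half-lives-half-dies principle applied over $\Q$, the image of $\iota_\ast\colon H_1(\partial M;\Q) \to H_1(M;\Q)$, where $\iota\colon \partial M \hookrightarrow M$, has dimension $g$, so $\rank_\Z H_1(M) \geq g$. Hence $\rank_\Z H_1(M) = g$, and the surjection $\phi\colon \Z^g \twoheadrightarrow H_1(M)$ has kernel a free subgroup of $\Z^g$ of rank zero, therefore trivial; thus $\phi$ is an isomorphism, proving (1').

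The main obstacle is the rank computation: it rests on half-lives-half-dies for compact oriented 3-manifolds with boundary together with the genus calculation for $R^{w_+}_{w_-,\sigma}$. Granting these, the rest is elementary linear algebra over $\Z$; as a byproduct, the argument also shows $H_1(M) \cong \Z^{g_+ + g_- + n}$ is torsion-free whenever $M$ is Lagrangian.
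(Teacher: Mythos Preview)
Your argument is correct. The surjectivity step is exactly what the paper uses implicitly, and your rank matching via half-lives-half-dies is a legitimate way to obtain the lower bound $\rank_\Z H_1(M)\ge g$; the concluding linear algebra over $\Z$ is the same in both proofs.

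The paper reaches the same lower bound by a different (and slightly more elementary) device: instead of invoking half-lives-half-dies, it uses the identity $\chi(\partial M)=2\chi(M)$ for a compact odd-dimensional manifold. Writing $\chi(M)=1-b_1(M)+b_2(M)$ (since $b_3(M)=0$ as $\partial M\neq\emptyset$) and plugging in the genus computation you carried out for $\partial M$ gives the exact relation $b_1(M)=g_++g_-+n+b_2(M)$, hence in particular $b_1(M)\ge g$. Combined with the surjectivity bound $b_1(M)\le g$, this forces $b_2(M)=0$ as well as $b_1(M)=g$. This extra consequence is not a byproduct of your half-lives-half-dies route, and the paper actually uses it immediately afterwards (in Remark~\ref{rem:HomCube}) to conclude that a Lagrangian cobordism is a homology handlebody. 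Of course, once you know $H_1(M)\cong\Z^g$ you can recover $b_2(M)=0$ a posteriori from the same Euler-characteristic identity, so the two approaches are ultimately equivalent in strength.
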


\begin{proof}
 It is easy to see that (1') implies (1).
 Let us prove the converse.
 It follows from (1) and (2) that $b_1(M) \le g_{-}+g_{+}+n$, where $b_1(M)$ is the first Betti number of $M$.
 Since $M$ is a compact odd-dimensional manifold, we have $\chi(\partial M)=2\chi(M)$, and thus $b_1(M)=g_{-}+g_{+}+n+b_2(M)$.
 Hence, the above inequality is an equality, that is, $m_{-,\ast}\oplus m_{+,\ast}$ is an isomorphism.
\end{proof}

\begin{remark}\label{rem:HomCube}
 By the proof of the above lemma, we have $b_2(M)=0$.
 Since $H_2(M)$ is free, we conclude $H_2(M)=0$, namely $M$ is a \emph{homology handlebody}.
 In particular, a homology handlebody of genus zero is called a \emph{homology cube}.
\end{remark}

\begin{example}
 The mapping cylinder of $h \in \M(F_w)$ is Lagrangian if and only if $h_\ast(A_w \oplus D_w) = A_w \oplus D_w$.
 Indeed, if $\C(h)$ is Lagrangian, then we have $h_\ast(A_w \oplus D_w) \subset A_w \oplus D_w$.
 Since $h_\ast$ is an isomorphism, so is $h_\ast|_{A_w \oplus D_w}$.
 The converse is obvious.
\end{example}

\subsection{Bottom-top tangles}\label{subsec:BTtangle}
In this subsection, we translate cobordisms to certain tangles in a homology cube.
Let $w \in \Mon(\bullet,\circ)$.
We prepare the reference surface $F_e$ with the pairs of points $\{p_1,q_1\},\dots,\{p_{|w^\bullet|},q_{|w^\bullet|}\}$ and the points $r_1\dots,r_{|w^\circ|}$ corresponding to the letters $\bullet$'s and $\circ$'s in $w$, respectively.
These points are on the $x$-axis according to $w$ as drawn in Figure~\ref{fig:PonitedReferenceSurface}.

\begin{figure}[h]
 \centering
 \includegraphics[width=0.5\columnwidth]{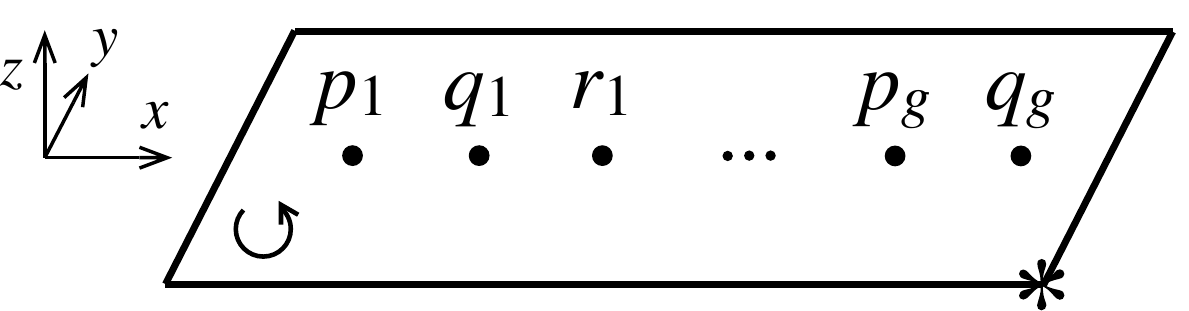}
 \caption{The reference surface with the points corresponding to $w$ such that $|w^\bullet|=g$.}
 \label{fig:PonitedReferenceSurface}
\end{figure}

\begin{definition}
 Let $w_+,w_- \in \Mon(\bullet,\circ)$ such that $|w_+^\circ|=|w_-^\circ|$ and let $\sigma \in \SS_{|w_+^\circ|}$.
 A \emph{bottom-top tangle} of \emph{type} $(w_+,w_-,\sigma)$ is an equivalence class of pairs $(B,\gamma)$, where
\begin{itemize}
 \item $B=(B,\emptyset,b)$ is a cobordism from $F_e$ to $F_e$,
 \item $\gamma$ is a framed oriented tangle in $B$, which consists of the top components $\gamma_1^+,\dots,\gamma_{|w_+^\bullet|}^+$, the bottom components $\gamma_1^-,\dots,\gamma_{|w_-^\bullet|}^-$ and the vertical components $\gamma_1^0,\dots,\gamma_{|w_+^\circ|}^0$, where
\begin{itemize}
 \item each $\gamma^{+}_i$ runs from $p_i\times1$ to $q_i\times1$,
 \item each $\gamma^{-}_i$ runs from $q_i\times(-1)$ to $p_i\times(-1)$,
 \item each $\gamma^0_i$ runs from $r_i\times1$ to $r_{\sigma(i)}\times(-1)$.
\end{itemize}
\end{itemize}
 Here, $(B,\gamma)$ and $(B',\gamma')$ are equivalent if they are of the same type and $B$ is equivalent to $B'$ as cobordisms by a homeomorphism that sends $\gamma$ to $\gamma'$ and respects their framings.
\end{definition}

We define the category $\btT$ as follows:
An object of $\btT$ is an element of $\Mon(\bullet,\circ)$.
A morphism from $w_+$ to $w_-$ is a bottom-top tangle of type $(w_+,w_-,\sigma)$ for some $\sigma \in \SS_{|w_+^\circ|}$.
The composition of $(B,\gamma)\in\btT(v,u)$ and $(C,\omega)\in\btT(w,v)$ is the bottom-top tangle obtained by inserting $([-1,1]^3, T_v)$ between them and performing the surgery along $(2|v^\bullet|)$-components link $\gamma^{+}\cup T_{w^\bullet}\cup\omega^{-}$ as illustrated in Figure~\ref{fig:TvIdw}.
The identity of $w$ is the bottom-top tangle as depicted in Figure~\ref{fig:TvIdw}. 
Indeed, one can check that
\[\Id_{w_-}\circ(B,\gamma) = (B,\gamma) = (B,\gamma)\circ\Id_{w_+},\]
using the Kirby move II and the useful fact that if a disk $D$ in a 3-manifold $M$ intersects a framed knot $K$ once, then the result of surgery along $K$ and $\partial D$ with 0-framing is again $M$ (see Figure~\ref{fig:composition}).

\begin{figure}[h]
 \centering
 \includegraphics[width=0.8\columnwidth]{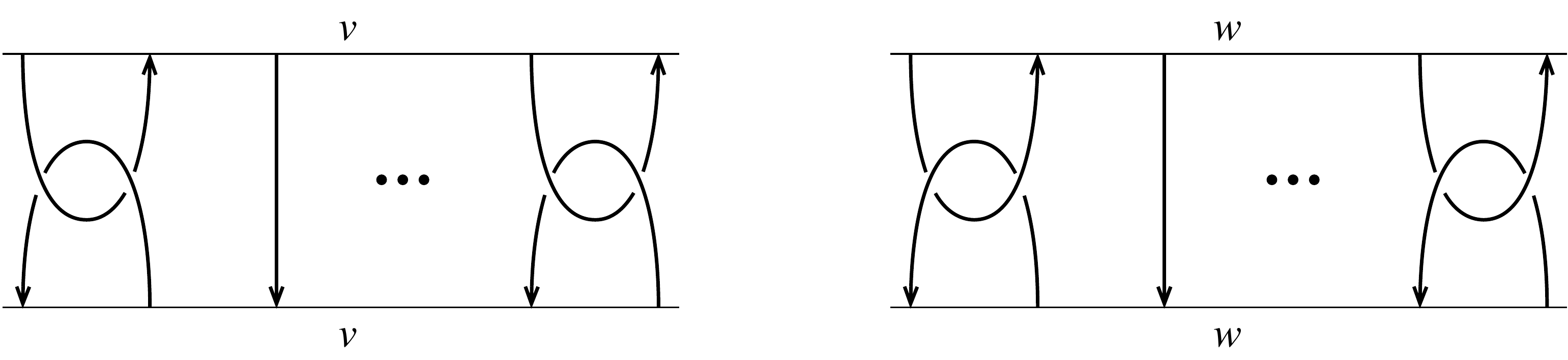}
 \caption{The bottom-top tangle $([-1,1]^3, T_v)$ and the identity $\Id_w$}
 \label{fig:TvIdw}
\end{figure}

\begin{figure}[h]
 \centering
 \includegraphics[width=0.8\columnwidth]{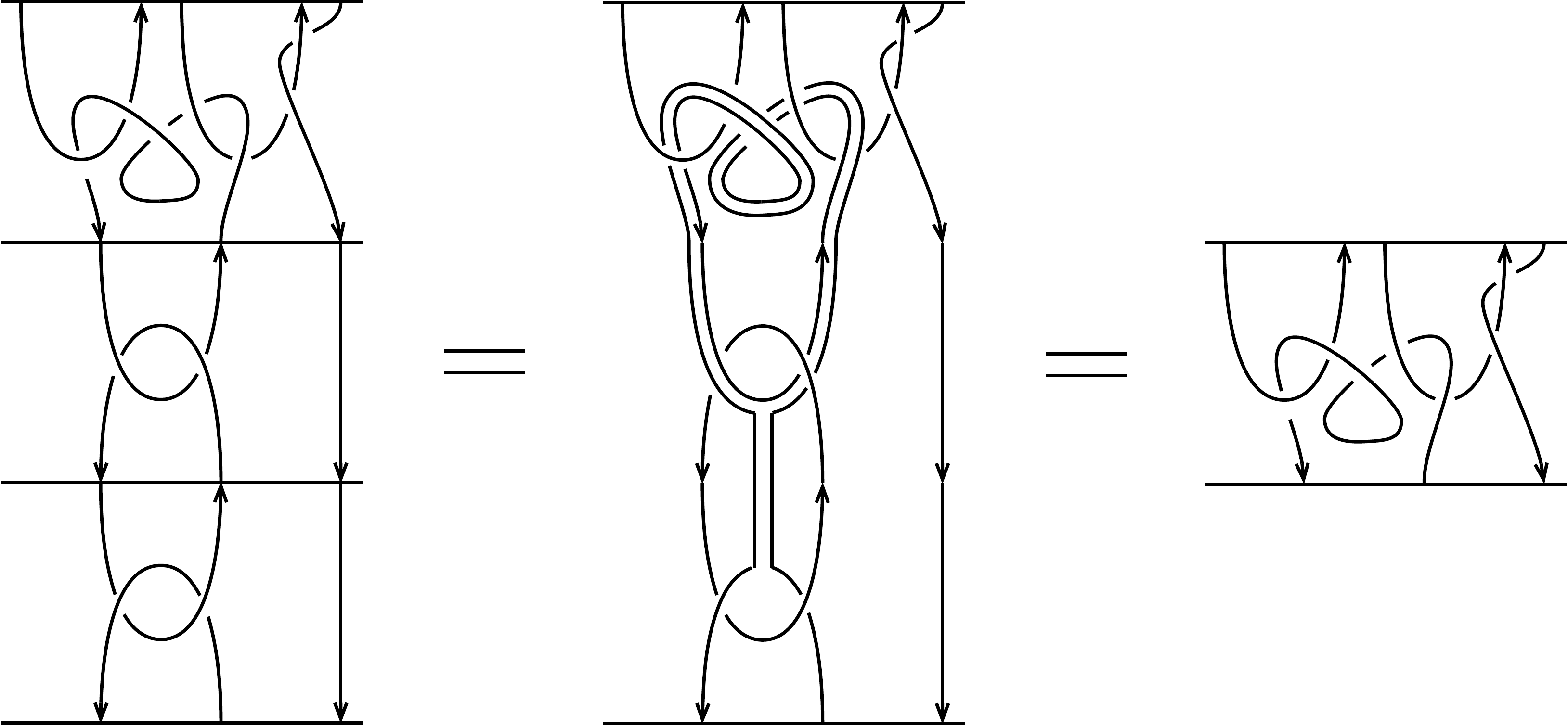}
 \caption{An example of the equality $\Id_{\bullet\circ}\circ(B,\gamma)=(B,\gamma)$}
 \label{fig:composition}
\end{figure}

Moreover, $\btT$ is a strict monoidal category, where the tensor product of two bottom-top tangles is their horizontal juxtaposition and the unit is the empty word $e$.

The next lemma is used in the following theorem and so on.

\begin{lemma}[see {\cite[Definition 3.15]{CHM08}}]
 Let $(B,\gamma)$ be a bottom-top tangle.
 Then there exist a framed link $L$ and a tangle $\gamma'$ in $[-1,1]^3$ such that $(B,\gamma)=([-1,1]^3_L,\gamma')$, where $[-1,1]^3_L$ is the \textup{3}-manifold obtained by surgery along $L$.
 The pair $(L,\gamma')$ is called a \emph{surgery presentation} of $(B,\gamma)$.
\end{lemma}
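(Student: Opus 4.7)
The plan is to first handle the ambient 3-manifold $B$ using Lickorish--Wallace, and then bring the tangle $\gamma$ back into $[-1,1]^3$ by general position against the surgery cores.

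First, recall that since $(B,\gamma)$ is a bottom-top tangle, $B$ is a cobordism from $F_e$ to $F_e$ with trivial permutation, so $\partial B$ is homeomorphic to $S^2$. Cap $B$ off by gluing a 3-ball $B^3$ along its boundary to obtain a closed, connected, oriented 3-manifold $\widehat{B}:=B\cup_{\partial}B^3$. By the Lickorish--Wallace theorem, there is a framed link $\widehat{L}\subset S^3$ and an orientation-preserving homeomorphism $S^3_{\widehat{L}}\cong \widehat{B}$. Using that the complement of any finite link can be pushed off of any chosen embedded 3-ball, we may isotope $\widehat{L}$ so that it lies entirely in $S^3$ minus the 3-ball corresponding to the capping $B^3\subset \widehat{B}$. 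Identifying $S^3\setminus \mathrm{Int}\,B^3$ with the cube $[-1,1]^3$ using the prescribed boundary parametrization of $B$, we get a framed link $L\subset \mathrm{Int}[-1,1]^3$ together with an orientation-preserving homeomorphism $[-1,1]^3_L\cong B$ that is the identity on the boundary. This realizes the manifold part of the desired surgery presentation.

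Next, we need to pull $\gamma$ back into $[-1,1]^3$. Write $\nu(L)\subset[-1,1]^3$ for an open tubular neighborhood of $L$; then by definition $[-1,1]^3_L = \bigl([-1,1]^3\setminus \nu(L)\bigr)\cup_{\partial} (\bigsqcup_i S^1\times D^2)$, and we have a canonical identification
\[
B\setminus \bigcup_i (\{0\}\times D^2_i) \;=\; [-1,1]^3\setminus L
\]
as 3-manifolds with corners, where the $\{0\}\times D^2_i$ are the cores of the newly glued solid tori. The tangle $\gamma\subset B$ meets $\partial B=\partial[-1,1]^3$ only in its endpoints, which already lie at the correct points $p_i$, $q_i$, $r_i$ dictated by its type. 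By transversality applied to the 1-dimensional $\gamma$ and the 1-dimensional cores, we may isotope $\gamma$ rel boundary to a tangle $\gamma'$ disjoint from all cores. Under the identification above, $\gamma'$ then sits inside $[-1,1]^3\setminus L\subset[-1,1]^3$ as a tangle with the required endpoint configuration. The framing of $\gamma$ is a parallel copy, that is, an embedded collection of annuli and arcs whose dimension (2) still fails to meet the 1-dimensional cores generically, so the same isotopy (suitably thickened) carries the framing of $\gamma$ to a framing of $\gamma'$. Hence $(B,\gamma)=([-1,1]^3_L,\gamma')$ as framed bottom-top tangles, yielding the desired surgery presentation $(L,\gamma')$.

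The main thing to be careful about, rather than a real obstacle, is verifying that the boundary identifications line up: the Lickorish--Wallace homeomorphism must be modified (by an isotopy of $S^3$ supported away from the surgered link) so that it restricts to the prescribed parametrization $b\colon \partial([-1,1]^3)\to\partial B$, and the resulting identification $[-1,1]^3_L\cong B$ must respect the marked endpoint pattern of $\gamma'$ versus $\gamma$. Both can be arranged by standard isotopy extension, since the parametrization of $\partial B$ and the marked points on $\partial[-1,1]^3$ are fixed data independent of the surgery data in the interior.
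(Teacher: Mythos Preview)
The paper does not actually prove this lemma; it is stated with a citation to \cite[Definition~3.15]{CHM08} and treated as known. Your argument is the standard one and is correct in outline: cap off $B$ to a closed manifold, apply Lickorish--Wallace, isotope the link off a ball to land in $[-1,1]^3$, and then push the tangle off the surgery cores by general position.

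One small slip: your justification for the framing is not quite right. You say the framing is ``an embedded collection of annuli and arcs whose dimension (2) still fails to meet the 1-dimensional cores generically,'' but in a 3-manifold a 2-dimensional surface and a 1-dimensional curve \emph{do} meet generically, in isolated points. The clean fix is to invoke isotopy extension: the rel-boundary isotopy of the 1-manifold $\gamma$ off the cores extends to an ambient isotopy of $B$ rel $\partial B$, and this ambient isotopy carries the framing (and indeed an entire tubular neighborhood of $\gamma$) along automatically. Alternatively, after $\gamma'$ is disjoint from the cores, shrink its tubular neighborhood so that it too misses the cores; the framing lives inside that neighborhood. Either way the conclusion stands.
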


\begin{theorem}[{\cite[Theorem 2.10]{CHM08}}]
 The operation as drawn in Figure~\textup{\ref{fig:dig}} gives an isomorphism $\D\colon \btT \to \Cob$ between two strict monoidal categories.
\end{theorem}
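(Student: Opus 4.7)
The plan is to extend the proof of the $\bullet$-only case in \cite[Theorem~2.10]{CHM08} to incorporate the $\circ$-letters. First I would make the digging operation $\D$ precise: given $(B,\gamma) \in \btT(w_+,w_-)$ of type $(w_+,w_-,\sigma)$, set $\D(B,\gamma) := (B \setminus \Int N(\gamma), \sigma, m)$, where $N(\gamma)$ is a closed tubular neighborhood of $\gamma$ respecting its framing. The boundary parametrization $m$ is read off from the product structure of $[-1,1]^3$ near the endpoints of $\gamma$: each top arc $\gamma_i^+$ produces a handle on the top surface corresponding to the $i$-th $\bullet$ of $w_+$, each bottom arc produces a handle for $w_-$, and each vertical arc $\gamma_i^0$ produces a tube joining the $i$-th $\circ$ on top to the $\sigma(i)$-th $\circ$ on the bottom. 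This matches $R^{w_+}_{w_-,\sigma}$ exactly. Well-definedness on equivalence classes is immediate, since any homeomorphism $B \to B'$ sending $\gamma$ to $\gamma'$ and respecting framings restricts to a homeomorphism of complements preserving the induced parametrization.

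Next, I would construct the inverse $\D^{-1}$. Given $(M,\sigma,m) \in \Cob(w_+,w_-)$, attach a $2$-handle to $M$ along each curve $m_+(\beta_i)$ on the top surface, each $m_-(\beta_i)$ on the bottom, and along a core circle of each of the $n$ tubes in the reference surface; the resulting boundary is $S^2$, so this yields a cobordism $B \in \Cob(e,e)$. The cores of the attached $2$-handles, equipped with their natural framings and orientations, form a framed tangle $\gamma$ in $B$ whose type is precisely $(w_+,w_-,\sigma)$: the $\beta_i$-handles supply the top and bottom components, and the tube $2$-handles supply the vertical components realizing $\sigma$. That $\D$ and $\D^{-1}$ are mutually inverse is a local verification, since a tubular neighborhood of an arc is the same thing as the $2$-handle whose core is that arc. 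Tensor-preservation is immediate from the locality of both digging and horizontal juxtaposition performed on disjoint pieces of $[-1,1]^3$.

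Finally, I would verify functoriality. The identity tangle $\Id_w$ consists of the trivial arcs in Figure~\ref{fig:TvIdw}, and digging them out of $[-1,1]^3$ yields the identity cobordism $(F_w \times [-1,1], \Id, \Id)$ by direct inspection; the trivial $\bullet$-arcs create the handles of $F_w$ above and below, while the trivial $\circ$-arcs create the tubes of $\partial(F_w \times [-1,1])$. For compatibility with composition, I use the surgery-presentation lemma to write $(B,\gamma) = ([-1,1]^3_L, \gamma')$, so that $\D(B,\gamma)$ is obtained from $[-1,1]^3 \setminus \Int N(\gamma')$ by surgery along $L$. Given $(B,\gamma) \in \btT(v,u)$ and $(C,\omega) \in \btT(w,v)$, their composition inserts $([-1,1]^3, T_v)$ and surgers along $\gamma^+ \cup T_{v^\bullet} \cup \omega^-$.

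The main obstacle is to verify that this surgery, after digging, yields the stacked cobordism $\D(B,\gamma) \circ \D(C,\omega)$; this is the heart of the argument and requires a Kirby-calculus computation. The $\bullet$-part of the verification is exactly the one carried out in \cite{CHM08}: the surgery along $\gamma^+ \cup T_{v^\bullet} \cup \omega^-$ fuses a pair of matched handles (one from the top of $\D(B,\gamma)$ and one from the bottom of $\D(C,\omega)$) into the single handle resulting from the gluing. The new $\circ$-contribution is formal: the $|v^\circ|$ trivial vertical strands of $T_v$ carry no surgery component and merely concatenate the tubes on the top of $\D(B,\gamma)$ with the matching ones on the bottom of $\D(C,\omega)$, composing the corresponding permutations as required. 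Once this is established, $\D$ is a monoidal functor with inverse $\D^{-1}$, giving the claimed isomorphism.
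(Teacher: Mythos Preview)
Your outline follows the paper's proof closely: define $\D$ by digging, build the inverse by $2$-handle attachment, and check functoriality via a surgery presentation with the $\circ$-strands handled formally. Two small inaccuracies in your inverse construction should be corrected. First, the $2$-handles on the bottom are attached along $m_-(\alpha_i)$, not $m_-(\beta_i)$; the asymmetry with the top (where one uses $m_+(\beta_i)$) comes from the orientation reversal on $-F_{w_-}$ in the reference surface. Second, the tangle $\gamma$ is recovered from the \emph{co-cores} of the $2$-handles, not the cores.

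On the composition check, you correctly identify this as the heart of the argument and defer to \cite{CHM08}, but the paper here is slightly more explicit about the mechanism than you suggest. Rather than working directly with $(L,\gamma')$, one introduces an auxiliary surgery link $L'$ so that $(B,\gamma)$ is obtained from a bottom-top tangle $([-1,1]^3,\widetilde{\gamma})$ with $\widetilde{\gamma}^+$ \emph{trivial}, by surgery along $L\cup L'$. Each $\widetilde{\gamma}^+_i$ then bounds a half-disk $D_i$, and after bundling the strands and surgery components that meet $D_i$, the disk intersects this bundle exactly once; this is precisely the configuration in the proof of $\Id_{w_-}\circ(B,\gamma)=(B,\gamma)$, so the same Kirby-move argument applies. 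Your claim that the $\circ$-part is then purely formal is correct.
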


\begin{figure}[h]
 \centering
 \includegraphics[width=0.9\columnwidth]{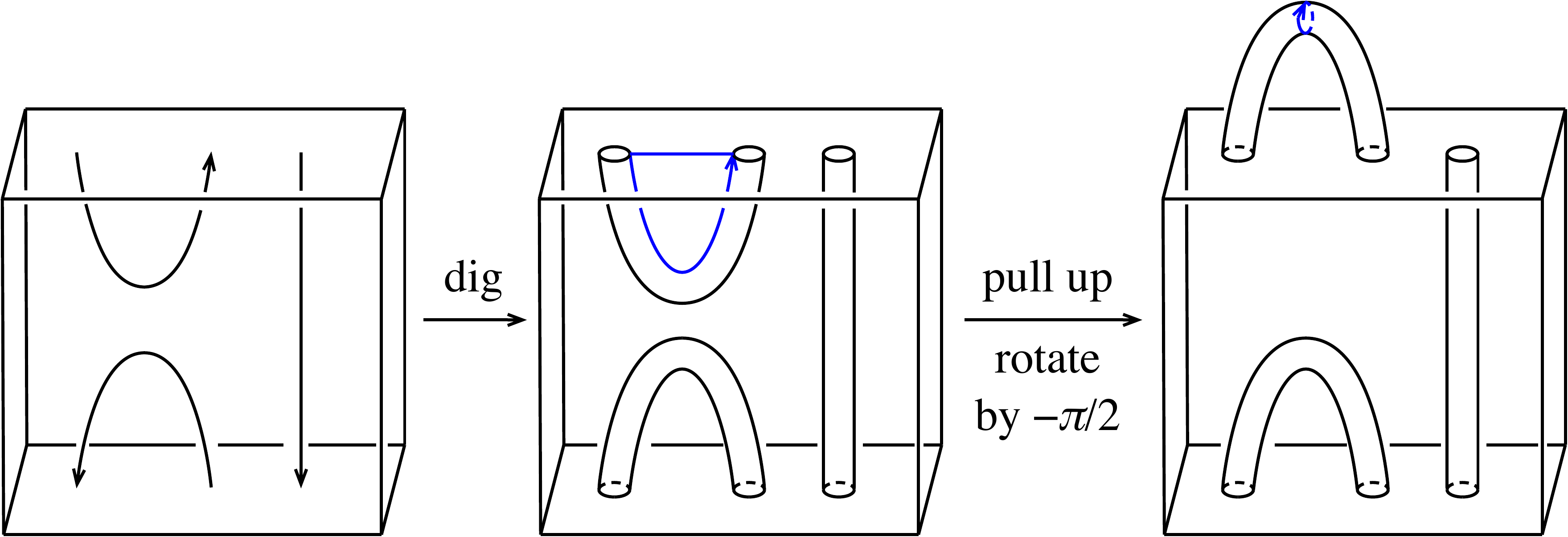}
 \caption{Correspondence between a bottom-top tangle and a cobordism}
 \label{fig:dig}
\end{figure}

\begin{proof}
 Precisely, $\D$ is defined by digging a bottom-top tangle $((B,b),\gamma)$ and parametrizing the boundary of the resulting manifold in accordance with the homeomorphism $b\colon R^e_{e,\Id} \to \partial B$ and the framings of $\gamma$.
 It is easy to see that $\D$ satisfies the definition of a monoidal functor except
 \[\D((B,\gamma)\circ(C,\omega)) = \D(B,\gamma)\circ\D(C,\omega).\]
 To prove this equality, we take a surgery presentation $(L,\gamma')$ of $(B,\gamma)$.
 Moreover, using an additional surgery link $L'$, $(B,\gamma)$ is expressed as the bottom-top tangle obtained from a bottom-top tangle $([-1,1]^3,\widetilde{\gamma})$ such that $\widetilde{\gamma}^+$ is trivial by surgery along $L \cup L'$.
 Each $\widetilde{\gamma}^+_i$ bounds a half-disk $D_i$ that could intersect $\widetilde{\gamma}^- \cup \widetilde{\gamma}^0$ or $L \cup L'$.
 We now gather the intersections on $D_i$, and we regard the components intersecting $D_i$ as a bundle in a neighborhood of $D_i$.
 Since $D_i$ intersects the bundle once, we can apply the argument in the proof of $\Id_{w_-} \circ (B,\gamma) = (B,\gamma)$, and thus the desired equality is obtained.
 
 Next, let $(M,\sigma,m)$ be a cobordism.
 The inverse of $\D$ can be constructed by attaching 2-handles $[0,1]^2\times[0,1]$ to the boundary of $M$.
 (We recall that the way of attaching a 3-dimensional 2-handle is uniquely determined by the way one glues the attaching sphere of the 2-handle with $\partial M$.)
 We obtain the 3-manifold $B$ by attaching 2-handles along $m_{+}(\beta_i)$'s, $m_{-}(\alpha_i)$'s and $m_{+}(\delta_i)$'s and define $\gamma'$ by the co-cores of the 2-handles.
 By construction, this correspondence gives the inverse of $\D$.
\end{proof}

\begin{example}
 Let $c$ be a simple closed curve on the surface $\Int F_w$ that is the interior of $F_w$.
 We denote by $\tau_c$ the Dehn twist along $c$.
 By the definition of a Dehn twist and surgery, the cobordisms $\C(\tau_{\alpha_i})$, $\C(\tau_{\beta_i})$ and $\C(\tau_{\delta_i})$ are sent by $\D^{-1}$ to
\begin{center}
 \raisebox{-2.2em}{
\begin{overpic}[height=7em]{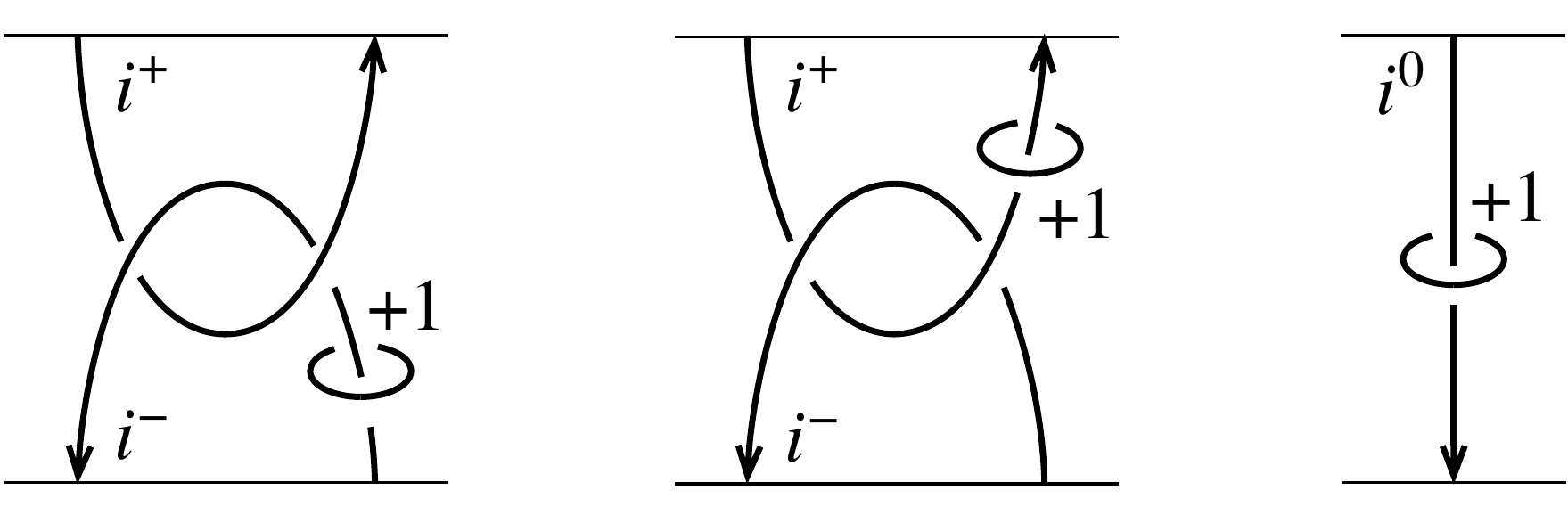}
 \put (35,12){,}
 \put (75,12){and}
\end{overpic}}
\end{center}
 respectively.
 $\C(\tau_{\alpha_i})$, $\C(\tau_{\delta_i})$ are Lagrangian while $\C(\tau_{\beta_i})$ is not.
\end{example}

There is a natural question when a bottom-top tangle is sent to a Lagrangian cobordism by $\D$.
We introduce an extension of the linking matrix defined in \cite[Definition 2.11]{CHM08} to answer this question.

\begin{definition}
 Let $((B,b),\gamma)$ be a bottom-top tangle of type $(w_+,w_-,\sigma)$ such that $B$ is a homology cube, and let $g_\pm:=|w_\pm^\bullet|$, $n:=|w_+^\circ|$.
 Prepare the manifold $S^3\setminus\Int[-1,1]^3$ containing four kinds of arcs: $g_+$ copies of $\cap$-like arcs, $g_-$ $\cup$-like arcs, $n$ parallel arcs and a braid $\beta \in B_n$ such that $\pi(\beta)=\sigma^{-1}$, as illustrated in Figure~\ref{fig:Lk}. 
 Then the \emph{linking matrix} of $\gamma$ in $B$ is defined by
 \[\Lk_B(\gamma) := \Lk_{\widehat{B}}(\widehat{\gamma})-O_{g_{+}+g_{-}}\oplus\sigma^{-1}\cdot\Cr(\beta) \in \tfrac{1}{2}\Sym_{\pi_0 \gamma}(\Z),\]
 where $\Lk_{\widehat{B}}(\widehat{\gamma})$ is the usual linking matrix of the link
 \[\widehat{\gamma} := \gamma \cup (\text{the arcs and braid in $S^3\setminus\Int[-1,1]^3$})\]
 in the homology sphere $\widehat{B} := B \cup_b (S^3 \setminus\Int[-1,1]^3)$.
 Here, $\Cr(\beta)$ is the matrix whose $(i,j)$-entry is half the number of positive crossings of the $i$th strand and the $j$th strand minus the number of negative ones, and $\sigma^{-1}\cdot\Cr(\beta)$ is the matrix obtained from $\Cr(\beta)$ by permuting its rows and columns by $\sigma^{-1}$.
\end{definition}

\begin{figure}[h]
 \centering
 \includegraphics[width=0.8\columnwidth]{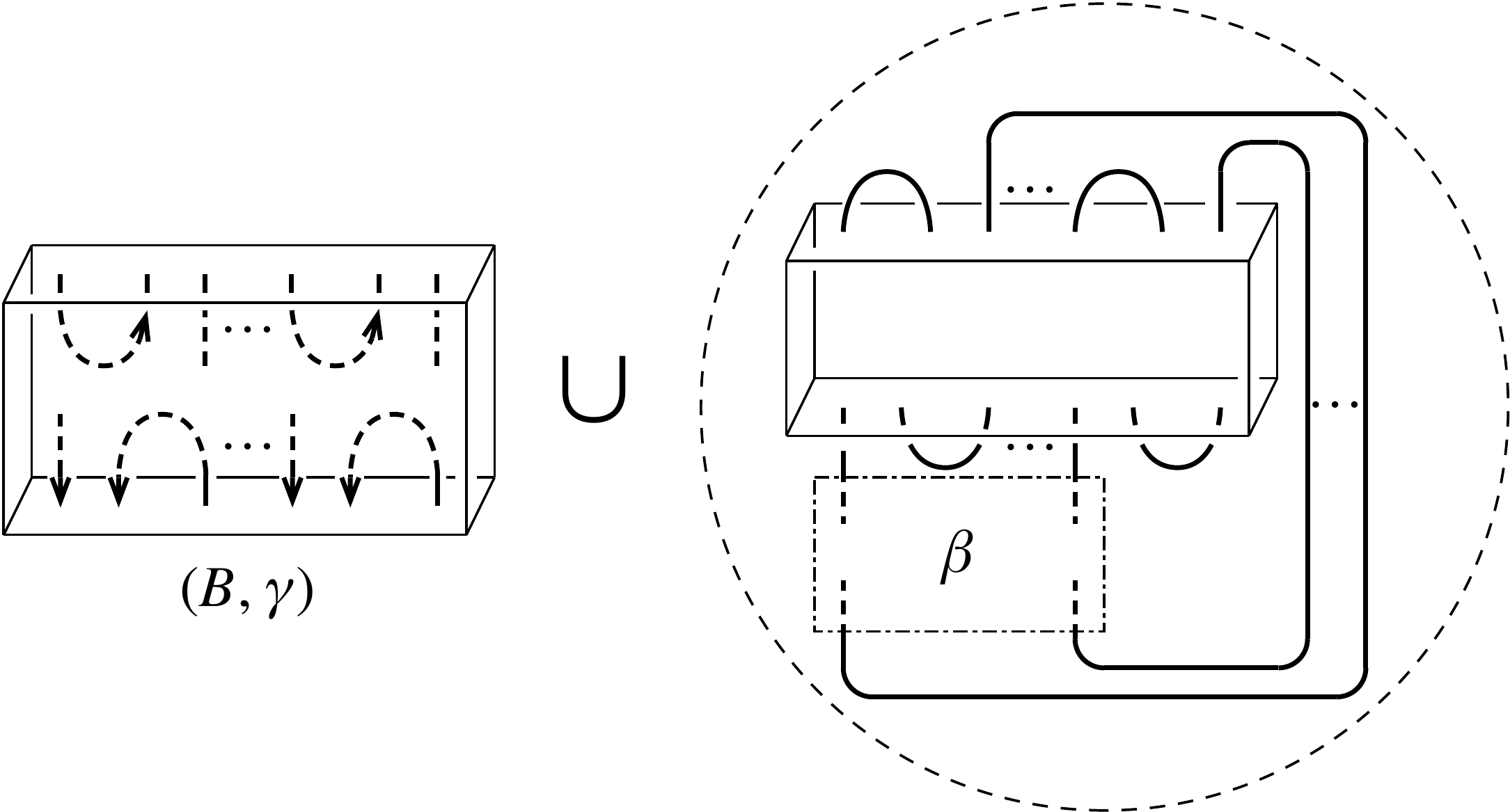}
 \caption{The union of $(B,\gamma)$ and additions, that is, $(\widehat{B},\widehat{\gamma})$}
 \label{fig:Lk}
\end{figure}

Moreover, we set $\Lk(M):=\Lk_B(\gamma)$ for a cobordism $(M,\sigma,m)=\D(B,\gamma)$, where $B$ is a homology cube.

\begin{remark}
 The above map $\Cr$ represents a non-trivial class of the first cohomology group $H^1(B_n^\op;\tfrac{1}{2}\Sym_n(\Z))$, where $B_n^\op$ is the group obtained from $B_n$ by reversing the order of multiplication.
 Indeed, $B_n^\op$ acts on $\tfrac{1}{2}\Sym_n(\Z)$, via the anti-homomorphism $\pi\colon B_n^\op \to \SS_n$, such that $\beta \cdot X:=\pi(\beta)^{-1} \cdot X$.
 Then $\Cr \colon B_n^\op \to \tfrac{1}{2}\Sym_n(\Z)$ is a crossed homomorphism, that is, it satisfies
 \[\Cr(\beta\beta') = \Cr(\beta)+\beta\cdot\Cr(\beta').\]
 Furthermore, $\left|\Im\left(d^0(X)\colon B_n^\op\to\tfrac{1}{2}\Sym_n(\Z)\right)\right| \le n!$ for any $X \in \tfrac{1}{2}\Sym_n(\Z)$.
 While, $|\Im(\Cr)| = \infty$.
\end{remark}

The next lemma is proved in much the same way as \cite[Lemma 2.12]{CHM08}.

\begin{lemma}\label{lem:Lagrangian}
 Let $(B,\gamma)$ be a bottom-top tangle.
 Then $\D(B,\gamma)$ is Lagrangian if and only if $B$ is a homology cube and $\Lk_B(\gamma^+)=O$ holds.
\end{lemma}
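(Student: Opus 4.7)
The plan is to extend the proof of \cite[Lemma~2.12]{CHM08}, which covers only the top and bottom components, by incorporating the vertical components $\gamma_k^0$ and the permutation $\sigma$. The argument rests on three ingredients: a Mayer--Vietoris computation for $B = M \cup N(\gamma)$, the geometric identification of the reference curves on $F_{w_\pm}$ with curves on $\partial M$ supplied by the digging correspondence $\D$, and a linking formula expressing the longitudes of $\gamma_i^+$ in $H_1(M)$ as combinations of meridians with coefficients given by the entries of $\Lk_B(\gamma)$.

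Since each component $\gamma_c$ of $\gamma$ is an arc with both endpoints on $\partial B$, a regular neighborhood $N(\gamma_c) \subset B$ is a $3$-ball and $M \cap N(\gamma_c)$ is an annulus whose $H_1$ is generated by the meridian $\mu_c$ of $\gamma_c$. Reading off the parametrization supplied by $\D$ gives
\[m_{+,*}(\beta_i^+) = \mu_{\gamma_i^+}, \quad m_{-,*}(\alpha_j^-) = \mu_{\gamma_j^-}, \quad m_{+,*}(\delta_k^+) = \mu_{\gamma_k^0},\]
while $m_{+,*}(\alpha_i^+)$ is the framed longitude of $\gamma_i^+$, closed up by a chord on $\partial B$. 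Constructing a Seifert-type $2$-chain in $B$ bounded by this longitude and counting its intersections with the other components of $\gamma$ yields the key identity
\[m_{+,*}(\alpha_i^+) = \sum_c \bigl(\Lk_B(\gamma)\bigr)_{\gamma_i^+, c}\, \mu_c \in H_1(M),\]
in which the correction term $O_{g_+ + g_-} \oplus \sigma^{-1}\cdot\Cr(\beta)$ appearing in $\Lk_B(\gamma)$ is precisely the difference between the chord-closure linking used here and the $\cap/\cup$-and-braid-closure linking used to define $\Lk_{\widehat{B}}(\widehat{\gamma})$.

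For the forward direction, when $B$ is a homology cube, Mayer--Vietoris applied to $B = M \cup N(\gamma)$ gives $H_1(M) \cong \bigoplus_c \Z\mu_c$ and $H_2(M) = 0$; assuming additionally $\Lk_B(\gamma^+) = O$, the identity above shows that $m_{+,*}(\alpha_i^+)$ lies in $m_{-,*}(A_{w_-}) + m_{+,*}(D_{w_+})$, verifying (2), while the transition matrix from the basis $(\alpha_j^-, \beta_i^+, \delta_k^+)$ of $A_{w_-} \oplus B_{w_+} \oplus D_{w_+}$ to the meridian basis of $H_1(M)$ is block upper-triangular with identity diagonal, giving (1'). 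Conversely, if $M$ is Lagrangian, then (1') together with Remark~\ref{rem:HomCube} yields $H_1(M)$ free of rank $g_+ + g_- + n$ with $H_2(M) = 0$; since the geometric identifications do not rely on the Lagrangian hypothesis, (1') forces the meridians to form a basis of $H_1(M)$, and Mayer--Vietoris then yields $H_1(B) = H_2(B) = 0$, so $B$ is a homology cube. Finally, (2) together with the linking identity forces the $\mu_{\gamma_j^+}$-coefficients of $m_{+,*}(\alpha_i^+)$ to vanish for all $i, j$, giving $\Lk_B(\gamma^+) = O$.

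The main obstacle, and the main novelty compared to \cite[Lemma~2.12]{CHM08}, is verifying the correction term $\sigma^{-1}\cdot\Cr(\beta)$ in the linking formula. Tracking the intersection signs of the Seifert-type $2$-chains with the closure arcs and the braid $\beta$ in $S^3 \setminus \Int[-1,1]^3$, one must check that exactly this crossing term appears as the discrepancy between the naive linking $\Lk_{\widehat{B}}(\widehat{\gamma})$ and the coefficients required in the chord-closure formula; this is a direct but careful geometric check in the presence of the non-trivial permutation $\sigma$ on the vertical components.
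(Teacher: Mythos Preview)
Your proof is correct and follows exactly the approach the paper indicates: extend \cite[Lemma~2.12]{CHM08} by incorporating the vertical components and the curves $\delta_k$. However, the ``main obstacle'' you identify is not actually present. The correction term $O_{g_++g_-}\oplus\sigma^{-1}\cdot\Cr(\beta)$ in the definition of $\Lk_B(\gamma)$ is block-diagonal and vanishes on every block except the $n\times n$ block indexed by the vertical components $\gamma^0$. In particular it vanishes on the entire $i^+$-row, so $\Lk_B(\gamma^+)$ is simply the $(+,+)$ block of $\Lk_{\widehat{B}}(\widehat\gamma)$, and your linking identity
\[
m_{+,*}(\alpha_i^+)=\sum_c\bigl(\Lk_B(\gamma)\bigr)_{\gamma_i^+,c}\,\mu_c
\]
requires no correction-term bookkeeping whatsoever: it reduces to the same chord-versus-$\cap$ comparison already present in \cite{CHM08}, together with the trivial observation that the $\cap$-arcs closing $\gamma^+$ in $\widehat\gamma$ do not meet the braid $\beta$ or the parallel arcs used to close $\gamma^0$. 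The only genuinely new ingredients here are that the meridians $\mu_{\gamma_k^0}$ now appear among the generators of $H_1(M)$ and that $m_{+,*}(\delta_k)=\mu_{\gamma_k^0}$, both of which are immediate from the digging construction. The subtraction of $\sigma^{-1}\cdot\Cr(\beta)$ in the extended definition of $\Lk_B(\gamma)$ is there to make Lemma~\ref{lem:bottom-top tangle} come out right (the $s$-reduction of $\chi^{-1}Z(B,\gamma)$), not this lemma.
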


\section{Jacobi diagrams and some operations}\label{sec:JacobiDiagOp}

\subsection{Jacobi diagrams}\label{subsec:JacobiDiag}
Let $D$ be a uni-trivalent graph, which is depicted by dashed lines.
A univalent (resp.\ trivalent) vertex of $D$ is called an \emph{internal} (resp.\ \emph{external}) \emph{vertex}.
We denote by $\ideg D$ (resp.\ $\edeg D$) the number of internal (resp.\ external) vertices of $D$ and we set $\deg D := (\ideg D +\edeg D)/2$.

Let $X$ be a compact oriented 1-manifold and let $C$ be a finite set.
A \emph{Jacobi diagram} based on $(X,C)$ is a vertex-oriented uni-trivalent graph whose vertices are either embedded into $X$ or are colored by elements of $C$.
When identifications of $\pi_0 X$ with $\pi_0 X'$ and $C$ with $C'$ are given, two Jacobi diagrams $D$ based on $(X,C)$ and $D'$ based on $(X',C')$ are identified if there exists a homeomorphism $f\colon X\cup D \to X'\cup D'$ respecting the identification, orientations of $X$ and $X'$ and the vertex-orientations of $D$ and $D'$.

A Jacobi diagram is drawn in the plane whose vertex-orientation agrees with the counter-clockwise order.
We denote by $\A(X,C)$ the quotient $\Q$-vector space spanned by Jacobi diagrams based on $(X,C)$ subject to the AS, IHX and STU relations depicted in Figure~\ref{fig:relations}.

\begin{figure}[h]
 \centering
\begin{minipage}{0.36\columnwidth}
 \centering
 \includegraphics[width=0.8\columnwidth]{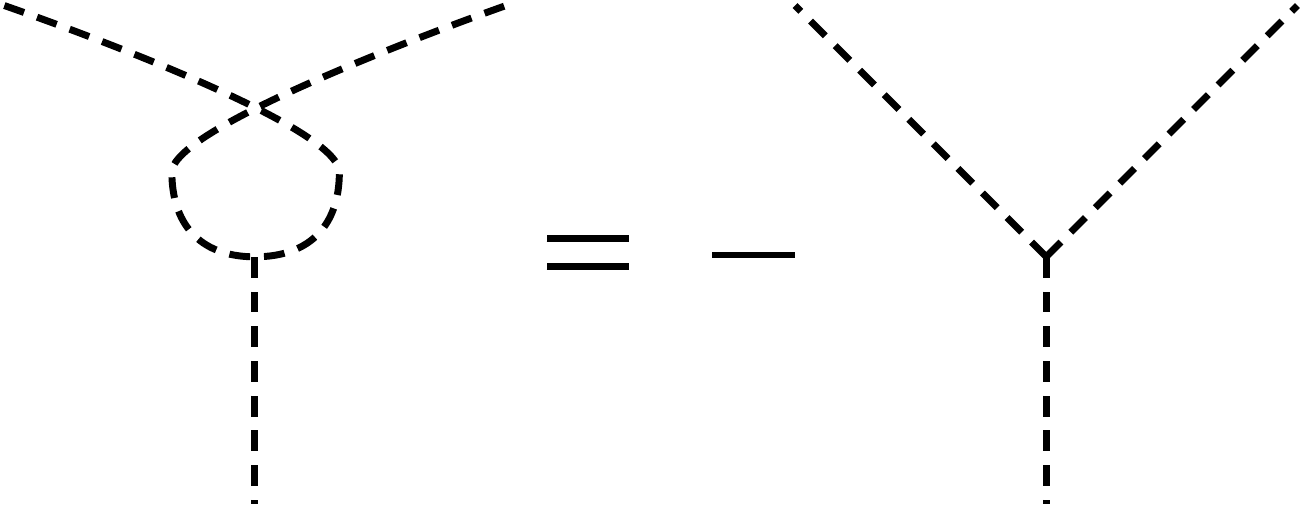}
\end{minipage}
\hspace{12pt}
\begin{minipage}{0.52\columnwidth}
 \centering
 \includegraphics[width=0.8\columnwidth]{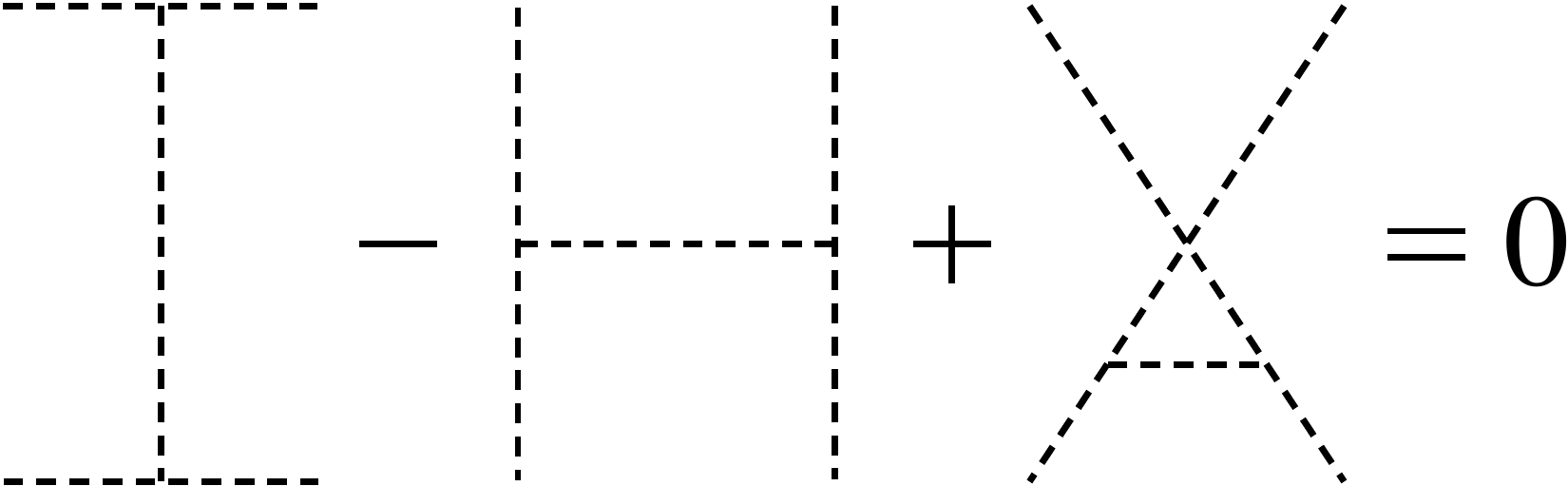}
\end{minipage}
\begin{minipage}{0.52\columnwidth} \vspace{0.5em}%
 \centering
 \includegraphics[width=0.8\columnwidth]{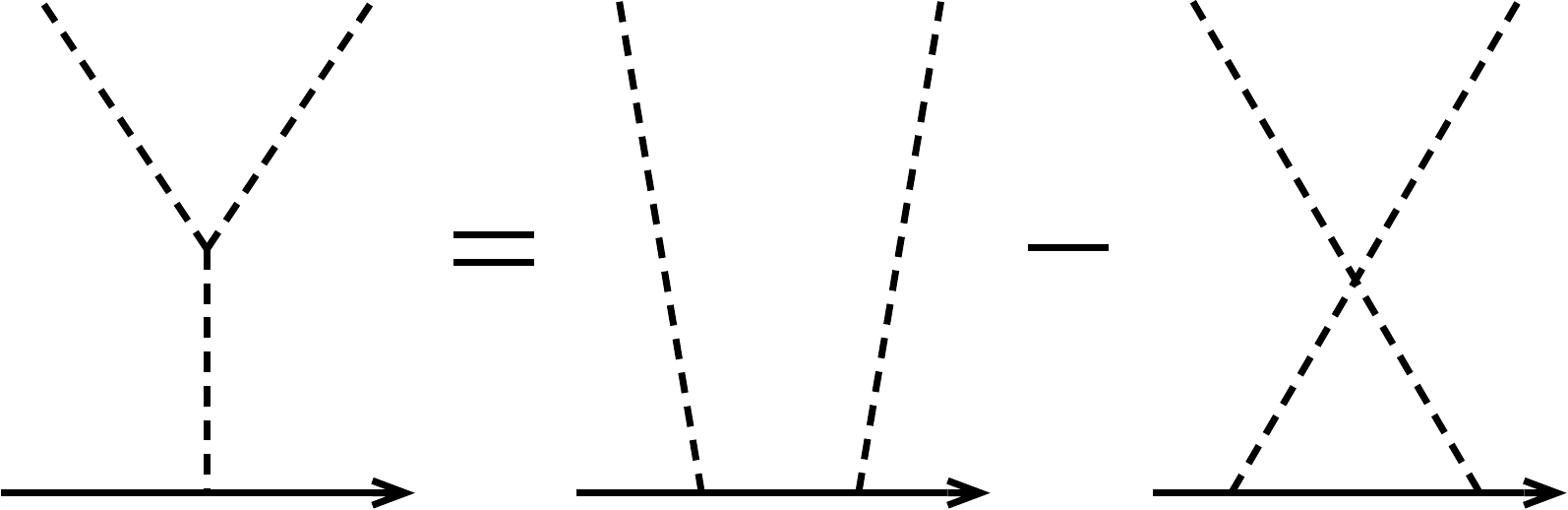}
\end{minipage}
 \caption{The AS, IHX and STU relations. These figures represent local relations, namely, the lest of the diagrams are identical.}
 \label{fig:relations}
\end{figure}

Since these relations preserve the degree of a Jacobi diagram, $\A(X,C)$ is a graded $\Q$-vector space.
Thus, we can take the degree completion of $\A(X,C)$ and denote it by $\A(X,C)$ again.
Note that the internal degree is not well-defined for Jacobi diagrams of $\A(X,C)$.
A Jacobi diagram $D \in \A(X,C)$ is said to have \emph{\textrm{i}-filter at least} $n$ if $D$ is written as a series of Jacobi diagrams whose internal degrees are at least $n$, and then we use the notation $\ifil D \ge n$.

If $X=\emptyset$ (resp.\ $C=\emptyset$), then we simply write $\A(C)$ (resp.\ $\A(X)$) for $\A(X,C)$ when no confusion can arise.
It is well known that $\A(C)$ is naturally equipped with a connected, commutative, cocommutative, graded Hopf algebra structure.
There are two important subsets of $\A(C)$.
We first set
\[\G(\A(C)) := \{x\in\A(C) \mid \Delta(x)=x\otimes x,\ \varepsilon(x)=\varnothing\},\]
where $\varnothing$ denotes the empty diagram.
$\G(\A(C))$ is a group whose multiplication is disjoint union $\sqcup$, and its element is said to be \emph{group-like}.
Let $\A^c(C)$ denote the subspace of $\A(C)$ spanned by non-empty connected Jacobi diagrams, which is identified with the quotient by the subspace generated by $\varnothing$ and disconnected Jacobi diagrams.
The image of $x \in \A(C)$ by the quotient map is denoted by $x^c$.
Moreover, it is well known that the map $\exp_\sqcup\colon \A^c(C) \to \G(\A(C))$ defined by $\exp_\sqcup(x) := \sum_{n\ge0}x^{\sqcup n}/n!$ is a group isomorphism.

We further use two subalgebras of $\A(C)$ which play an important role in this paper.
The subalgebra generated by Jacobi diagrams only with struts (resp.\ without struts) is denoted by $\A^s(C)$ (resp.\ $\A^Y(C)$) that is identified with the quotient by the ideal generated by diagrams with $\ideg>0$ (resp.\ the ideal generated by struts).
The quotient maps are called the \emph{$s$-reduction} (resp.\ \emph{$Y$-reduction}), the images of $x\in\A(C)$ is denoted by $x^s$ (resp.\ $x^Y$).

\begin{lemma}[{\cite[Lemma~3.5]{CHM08}}] \label{lem:group-like}
 The group $\G(\A(C))$ coincides with the set
 \[\{x\in\A(C) \mid \text{$x^s, x^Y \in \G(\A(C))$ and $x=x^s \sqcup x^Y$}\}.\]
\end{lemma}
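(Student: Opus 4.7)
The plan is to split the claimed equality into two inclusions, one trivial and one requiring the exponential isomorphism stated just above the lemma. For the inclusion $\supseteq$, the set of group-like elements of any Hopf algebra is closed under multiplication, so if $x^s$ and $x^Y$ both lie in $\G(\A(C))$ and $x = x^s \sqcup x^Y$, then $x \in \G(\A(C))$. I would dispatch this in one line.

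For the reverse inclusion $\subseteq$, I will exploit the stated isomorphism $\exp_\sqcup\colon \A^c(C) \to \G(\A(C))$. Given $x \in \G(\A(C))$, write $x = \exp_\sqcup(y)$ for the unique $y \in \A^c(C)$. Every non-empty connected Jacobi diagram is either a strut (no trivalent vertex, hence $\ideg = 0$) or has at least one trivalent vertex, so $\A^c(C)$ decomposes as a direct sum of its ``strut part'' and its ``non-strut part''. Accordingly I split $y = y_s + y_Y$ where $y_s$ is a series of struts and $y_Y$ is a series of connected diagrams of positive internal degree. Since $\A(C)$ is commutative under $\sqcup$, the exponential factors:
\[
x \;=\; \exp_\sqcup(y_s + y_Y) \;=\; \exp_\sqcup(y_s) \sqcup \exp_\sqcup(y_Y),
\]
and each factor is group-like because it lies in the image of $\exp_\sqcup$.

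It then remains to identify these two factors with $x^s$ and $x^Y$. I plan to use that the $s$-reduction is a Hopf-algebra map which restricts to the identity on the subalgebra $\A^s(C)$ and which kills the positive-degree part of $\A^Y(C)$: indeed, every positive-degree term of $\exp_\sqcup(y_Y)$ is a disjoint union of non-strut connected diagrams, and such a union still has $\ideg > 0$, hence lies in the ideal being quotiented out. Consequently, applying the $s$-reduction to $x$ sends $\exp_\sqcup(y_s)$ to itself and $\exp_\sqcup(y_Y)$ to $\varnothing$, giving $x^s = \exp_\sqcup(y_s)$; the symmetric argument using the $Y$-reduction gives $x^Y = \exp_\sqcup(y_Y)$. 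Both are therefore group-like, and $x = x^s \sqcup x^Y$ as required.

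The only place that needs care is the last identification step, where one must keep straight the two descriptions of $\A^s(C)$ and $\A^Y(C)$ as subalgebras and as quotients of $\A(C)$: the composition of the inclusion with the corresponding reduction is the identity, while the cross-reduction ($s$-reduction applied to an element of $\A^Y(C)$, or vice versa) annihilates the augmentation ideal. Apart from this bookkeeping, every step is forced once one has the $\exp_\sqcup$ isomorphism in hand, so I do not anticipate a serious obstacle.
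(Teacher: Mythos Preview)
Your argument is correct. The paper itself does not supply a proof of this lemma; it simply records the statement and cites \cite[Lemma~3.5]{CHM08}. So there is no in-paper proof to compare against, and your proposal stands on its own.

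The two-inclusion strategy is exactly the natural one. The inclusion $\supseteq$ is immediate from closure of group-likes under $\sqcup$. For $\subseteq$, your use of the isomorphism $\exp_\sqcup\colon \A^c(C)\to\G(\A(C))$ together with the observation that every non-empty connected Jacobi diagram is either a single strut or has $\ideg\ge 1$ gives the splitting $y=y_s+y_Y$ and hence the factorisation $x=\exp_\sqcup(y_s)\sqcup\exp_\sqcup(y_Y)$. The identification of the two factors with $x^s$ and $x^Y$ is correct: both reductions are algebra homomorphisms (being quotients by ideals), the $s$-reduction fixes $\exp_\sqcup(y_s)$ and sends $\exp_\sqcup(y_Y)$ to $\varnothing$ because every non-empty term of the latter has positive internal degree, and symmetrically for the $Y$-reduction. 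The bookkeeping you flag---that the subalgebra and quotient descriptions of $\A^s(C)$ and $\A^Y(C)$ are compatible via the reduction maps---is exactly the point, and you have it right.
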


\begin{remark}
 The subspace $\A^Y(C)$ can also be regarded as the result of the internal degree completion, since $(\deg D)/2 \le \ideg D \le 2\deg D$ holds for all $D \in \A^Y(C)$.
 We denote by $\A_i(C)$ the subspace of all homogeneous elements of internal degree $i$.
\end{remark}

Next, we review an analogue of the Poincar\'e-Birkhoff-Witt isomorphism.
Let $\downarrow^S$ (resp.\ $\circlearrowleft^S$) denote $|S|$ intervals (resp.\ circles) indexed by elements of a finite set $S$.

\begin{definition}
 The graded linear map $\chi_S \colon \A(X, C\cup S) \to \A(X\downarrow^S, C)$ is defined for a Jacobi diagram $D \in \A(X, C\cup S)$ to be the average of all possible ways of attaching the $s$-colored vertices in $D$ to the $s$-indexed interval, for all $s \in S$, as depicted in Figure~\ref{fig:PBW}.
\end{definition}

\begin{figure}[h]
 \centering
 \includegraphics[width=0.6\columnwidth]{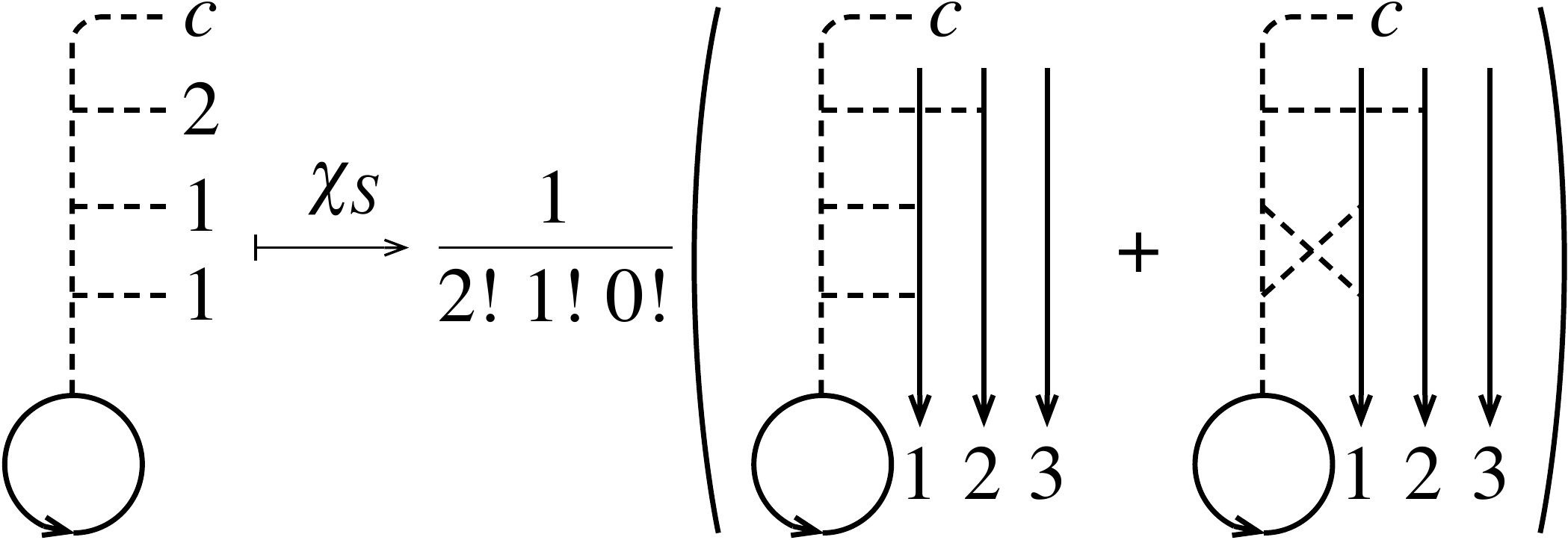}
 \caption{An example of $\chi_S$ with $X=\ \circlearrowleft$, $S=\{1,2,3\}$ and $c \in C$}
 \label{fig:PBW}
\end{figure}

It is well known that $\chi_S$ is a graded coalgebra isomorphism.
Similarly, an isomorphism $\chi_S \colon \A(X, C\cup S)/{\sim} \to \A(X\!\circlearrowleft^S, C)$ is defined, where $\sim$ denotes the $S$-link relation defined in \cite[Section~5.2]{BGRT02b} (see also \cite[Remark~3.7]{CHM08}).
We need an extension of $\chi_S$ to construct the composition law of the category $\tsA$ defined later.

\begin{definition}
 Let $S'$ be a copy of the above set $S$.
 The graded linear map
 \[\chi_{S,S'} \colon \A(X,C\cup S\cup S')\to\A(X\downarrow^S,C)\]
 is defined for a Jacobi diagram $D \in \A(X, C\cup S\cup S')$ to be stacking of the $s'$-indexed interval on the $s$-indexed interval in $\chi_S \circ \chi_{S'}(D)$, for all $s \in S$, as illustrated in Figure~\ref{fig:PBW2}.
\end{definition}

\begin{figure}[h]
 \centering
 \includegraphics[width=0.8\columnwidth]{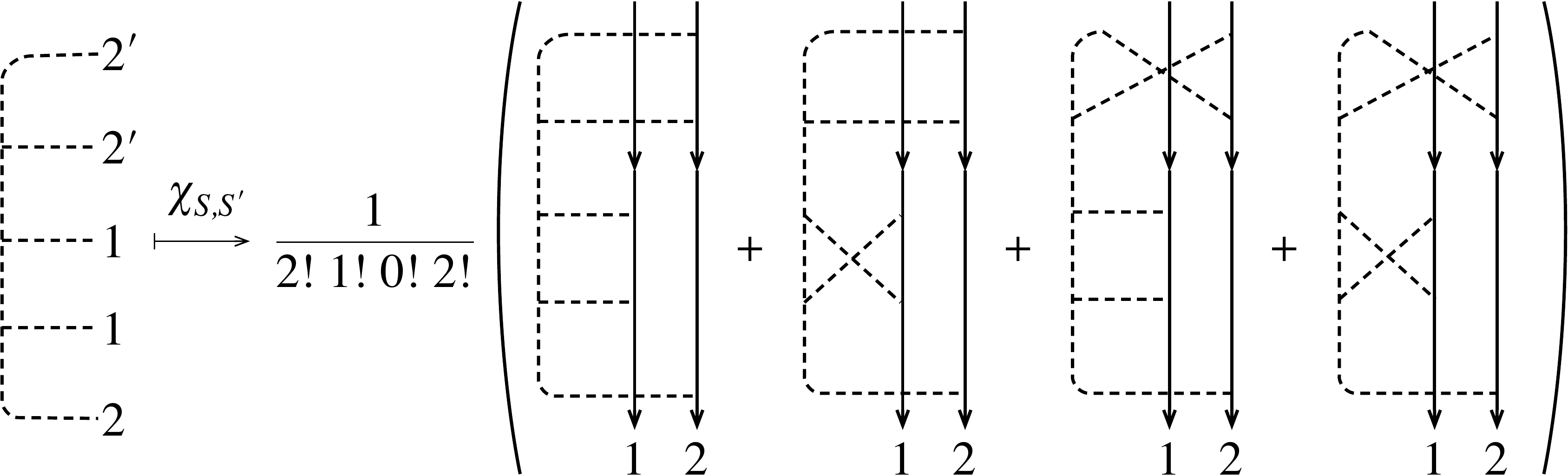}
 \caption{An example of $\chi_{S,S'}$ with $X=C=\emptyset$, $S=\{1,2\}$ and $S'=\{1',2'\}$}
 \label{fig:PBW2}
\end{figure}

Similar ideas appear, for example, in \cite[Claim~5.6]{CHM08}, \cite[Proposition~5.4]{BGRT02b} and \cite[(3.3)]{HaMa09}.
Before finishing this subsection, we recall the following lemma that is used several times.

\begin{lemma}[{\cite[Lemma 8.19]{CHM08}}]\label{lem:filter}
 Let $D$ be a Jacobi diagram based on $(X\!\downarrow^S,C)$.
 Then we have
 \[\chi_S^{-1}(D) = \widetilde{D}+(\ifil>\ideg D),\]
 where $\widetilde{D}$ is obtained from $D$ by deleting the $S$-indexed intervals.
\end{lemma}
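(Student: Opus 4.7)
The plan is to compute $\chi_S(\widetilde{D})$ directly and then solve $\chi_S(x) = D$ by successive approximation inside the degree completion. By definition of $\chi_S$, the element $\chi_S(\widetilde{D})$ is, for each $s \in S$, the average over orderings of the $s$-colored univalent legs of the Jacobi diagram obtained by attaching those legs to the $s$-indexed interval in the chosen order. One summand in this average is $D$ itself; every other summand differs from $D$ only by a reordering of the attached endpoints along the intervals.

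The heart of the matter is the identity
\[
\chi_S(\widetilde{D}) = D + E_1, \qquad \ifil E_1 \ge \ideg D + 1.
\]
To prove it, I would decompose each permutation of attached endpoints into adjacent transpositions and apply the STU relation at each swap: exchanging two neighboring legs on an interval equals, in $\A(X\downarrow^S, C)$, the original diagram plus a correction in which those two legs are fused through a fresh trivalent vertex whose remaining leg replaces them on the interval. Each such correction has one more trivalent vertex than $D$, hence strictly larger internal degree. Averaging over all orderings, the deviation $E_1 := \chi_S(\widetilde{D}) - D$ is a series of Jacobi diagrams of internal degree at least $\ideg D + 1$.

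Given this identity, the lemma follows by iteration. Applying it to $E_1$ yields $\chi_S(\widetilde{E_1}) = E_1 + E_2$ with $\ifil E_2 \ge \ideg D + 2$, and inductively $\ifil E_k \ge \ideg D + k$. The partial sums
\[
x_n := \widetilde{D} + \sum_{k=1}^{n} (-1)^k \widetilde{E_k}
\]
satisfy $\chi_S(x_n) = D + (-1)^n E_{n+1}$, and they converge in the degree completion because $\ifil \widetilde{E_k} \to \infty$. The limit is therefore the preimage $\chi_S^{-1}(D)$, and by construction the difference $\chi_S^{-1}(D) - \widetilde{D}$ has internal filter at least $\ideg D + 1$. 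The only non-formal step is the STU bookkeeping that produces the key identity above; I expect that to be the main obstacle, while the rest is a routine convergence argument in the completion.
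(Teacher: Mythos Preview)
The paper does not supply its own proof of this lemma; it is simply quoted from \cite[Lemma~8.19]{CHM08}. Your argument is correct and is essentially the standard one: the STU relation guarantees that any two orderings of the legs on an $s$-indexed interval differ by diagrams with strictly more trivalent vertices, so $\chi_S(\widetilde{D}) = D + (\ifil > \ideg D)$, and inverting this by successive approximation gives the statement. One small point you might make explicit for the convergence step: since $\deg E \ge \tfrac{1}{2}\ideg E$ for any diagram $E$, the condition $\ifil E_k \to \infty$ does force convergence in the \emph{degree} completion used to define $\A(X\!\downarrow^S,C)$.
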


\subsection{The formal Gaussian integral}\label{subsec:GaussianInt}
We first review $S$-substantial Jacobi diagrams according to \cite{CHM08}.

\begin{definition}
 A Jacobi diagram $D \in \A(X,C\cup S)$ is \emph{$S$-substantial} if $D$ contains no strut both of whose vertices are colored by the elements of $S$, and an element of $\A(X,C\cup S)$ which can be written as a series of such diagrams is also called $S$-substantial.
 
 Let $D$ and $E$ be Jacobi diagrams based on $(X,C\cup S)$ such that at least one of them is $S$-substantial.
 If $D$ and $E$ have the same number of $s$-colored vertices, for all $s \in S$, then we define $\ang{D,E}_S \in \A(X,C)$ by
\[\ang{D,E}_S:= \left(\parbox{0.6\textwidth}{sum of all ways of connecting the $s$-colored vertices in $D$ with those of $E$, for all $s \in S$}\right).\]
 Otherwise, we set $\ang{D,E}_S :=0$.
 If $D$ and $E$ have no $S$-colored vertex, we naturally interpret $\ang{D,E}_S$ as their disjoint union.
\end{definition}

From now on, we identify a symmetric matrix $(a_{ij})_{i,j\in S}$ with a linear combination of struts $\sum\limits_{i,j\in S} a_{ij} \strutgraph{j}{i}$, and let $[x]:=\exp_\sqcup(x)$ for any $x \in \A(C)$.

\begin{definition}\label{def:GaussianInt}
 An element $G \in \A(X,C\cup S)$ is \emph{Gaussian} in the \emph{variable $S$} if there are a symmetric matrix $L \in \Sym_S(\Q)$ and an $S$-substantial element $P \in \A(X,C\cup S)$ such that $G = [L/2]\sqcup P$.
 Here, if $\det L\ne 0$, then $G$ is said to be \emph{non-degenerate}, and we set
 \[\int_S G := \ang{[-L^{-1}/2],P}_S\ \in \A(X,C)\]
 that is called the \emph{formal Gaussian integral} of $G$ along $S$.
\end{definition}

\begin{remark}
 If $L$ and $P$ exist, they are unique.
 $L$ is called the \emph{covariance matrix} of $G$ in \cite{BGRT02a}.
\end{remark}

\section{The Kontsevich-LMO invariant}\label{sec:KontsevichLMOinv}
In this section, we review the domains and codomains of the Kontsevich invariant and Kontsevich-LMO invariant in accordance with \cite{CHM08}.

\subsection{Domain of the Kontsevich-LMO invariant}\label{subsec:Domain}
In order to define the Kontsevich invariant and the Kontsevich-LMO invariant as functors, we use the categories $\T_q$, $\T_q\Cub$ and $\A$.

We define the non-strict monoidal category $\T_q\Cub$ as follows:
The set of objects is the magma $\Mag(+,-)$.
A morphism from $w_+$ to $w_-$ is a $q$-tangle in a homology cube of type $(w_+,w_-)$, that is, a properly embedded compact 1-manifold whose boundary corresponds to letters in $w_\pm$.
The composition of $(B,\gamma)\in\T_q\Cub(v,u)$ and $(C,\omega)\in\T_q\Cub(w,v)$ is stacking $C$ on the top of $B$.
The identity of $w$ is the $q$-tangle $([-1,1]^3,\downarrow^w)$, where $\downarrow^w$ is the union of $\downarrow$'s and $\uparrow$'s and each $\downarrow$ (resp.\ $\uparrow$) corresponds to a letter $+$ (resp.\ $-$) in $w$.
The tensor product of $q$-tangles is their juxtaposition.
The unit is the empty word $e$.

Moreover, we denote by $\T_q$ the wide subcategory whose morphisms are $q$-tangles in $[-1,1]^3$.

\subsection{Codomain of the Kontsevich-LMO invariant}\label{subsec:Codomain}
Next, we define the strict monoidal category $\A$ as follows:
The set of objects is $\Mon(+,-)$.
A morphism from $w$ to $v$ is an element of $\coprod_X \A(X,\emptyset)$, the union is taken over all oriented compact 1-manifolds $X$ such that each start (resp.\ end) point corresponds to a letter $+$ in $w$ or $-$ in $v$ (resp.\ $-$ in $v$ or $+$ in $w$).
Roughly speaking, we assign $+$ to the boundary of an interval which faces downward.
The composition of two elements $D\in\A(v,u)$ and $E\in\A(w,v)$ is defined to be stacking $E$ on $D$ according to the letters in $v$.
We often denote by $D\natural E$ the composite $D\circ_{\A} E$.
The identity of $w$ is the intervals $\downarrow^w$.
The tensor product of two morphisms is defined to be their disjoint union.
The unit is the empty word $e$.

Here, we prepare three operations in accordance with \cite{CHM08}.
Let $\Delta$ and $S$ denote the doubling map and orientation reversal map defined as usual:
\[\Delta\colon \A(X \downarrow,C) \to \A(X \downarrow \downarrow,C),\quad S\colon \A(X \downarrow,C) \to \A(X \uparrow,C).\]
Let $w,w_1,\dots,w_n \in \Mon(+,-)$ such that $|w|=n$.
We define the linear map
\[\Delta^{w}_{w_1,\dots,w_n}\colon \A(X \downarrow^w,C) \to \A(X \downarrow^{w_1 \dotsm w_n},C)\]
by applying $\Delta$ as much as $(|w_i|-1)$ times to the interval indexed by the $i$th letter in $w$ for $i=1,\dots,n$, and applying $S$ to the intervals where the two letters differ.

\subsection{Definition of the Kontsevich-LMO invariant}\label{subsec:DefKontsevichLMOinv}
We first fix an (rational) associator $\Phi \in \A(\downarrow \downarrow \downarrow)$ to define the Kontsevich invariant of a $q$-tangle in $[-1,1]^3$.
The \emph{Kontsevich invariant} $\ZK$ is defined by $\ZK(\gamma):=Z(\gamma)$ for a $q$-tangle $\gamma$, where $Z$ is the Kontsevich integral in \cite{CHM08}.
By definition, $\ZK$ defines a tensor-preserving functor $\T_q \to \A$, that is, it satisfies the following conditions:
\begin{enumerate}
 \item $\ZK(\Id_w) = \ \downarrow^w$,
 \item $\ZK(\gamma\circ\omega) = \ZK(\gamma)\circ\ZK(\omega)$,
 \item $\ZK(e) = e$,
 \item $\ZK(\gamma\otimes\omega) = \ZK(\gamma)\otimes\ZK(\omega)$.
\end{enumerate}

Next, we review the Kontsevich-LMO invariant of a $q$-tangle in a homology cube $B$.
We define $U_\pm$ by
\[U_\pm := \int_{\{\ast\}} \chi_{\{\ast\}}^{-1}\ZK(\circlearrowleft_{\pm1}) \sharp \nu \in \A(\emptyset,\emptyset),\]
where $\ast$ is the label of the $(\pm1)$-framed unknot, and $\nu$ denotes the Kontsevich invariant of the 0-framed unknot, and $\sharp$ means the multiplication induced by the connected sum of two circles.
The \emph{Kontsevich-LMO invariant} $\ZKLMO$ is defined by
 \[\ZKLMO(B,\gamma) := U_{+}^{-\sigma_{+}(L)} \sqcup U_{-}^{-\sigma_{-}(L)} \sqcup \int_{\pi_0 L} \chi_{\pi_0 L}^{-1} \ZK(L^\nu \cup \gamma) \in \A(\gamma,\emptyset)\]
for a $q$-tangle $(B,\gamma)$, where $(L,\gamma)$ is a surgery presentation of $(B,\gamma)$, $\sigma_\pm(L)$ is the number of positive/negative eigenvalues of the linking matrix of $L$, and let $\ZK(L^\nu \cup \gamma)$ denote $\ZK(L \cup \gamma)\sharp\nu^{\otimes L}$.
One can check that $\ZKLMO$ defines a tensor-preserving functor $\T_q\Cub \to \A$.

Here, a bottom-top $q$-tangle in a homology cube $B$ is regarded as a $q$-tangle in $B$ via the embedding $\btT_q(w,v) \to \T_q\Cub(w',v')$, where words $v',w' \in \Mag(\bullet,\circ)$ are obtained from words $v,w \in \Mag(+,-)$ by the rule
\[\bullet\mapsto(+-),\quad \circ\mapsto+.\]
Then we denote by $Z(B,\gamma)$ the Kontsevich-LMO invariant of a bottom-top $q$-tangle $(B,\gamma)$.

The next lemma is used in Section~\ref{subsec:ConstExtLMOfunc} and proved similarly to \cite[Lemma 3.17]{CHM08}.
The only difference is that the definition of $\Lk_B(\gamma)$ is extended.

\begin{lemma}\label{lem:bottom-top tangle}
 Let $(B,\gamma)$ be a bottom-top $q$-tangle.
 Then $\chi_{\pi_0 \gamma}^{-1}Z(B,\gamma) \in \A(\pi_0 \gamma)$ is group-like and its $s$-reduction is equal to $[\Lk_B(\gamma)/2]$.
\end{lemma}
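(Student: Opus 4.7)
The plan is to follow the argument of \cite[Lemma~3.17]{CHM08} closely, the only novelty being the contribution of the vertical components $\gamma^0$ and the closing braid $\beta$ that enter the extended definition of $\Lk_B(\gamma)$. Pick a surgery presentation $(L,\gamma')$ of $(B,\gamma)$ in $[-1,1]^3$, so that
\[
Z(B,\gamma) = U_+^{-\sigma_+(L)} \sqcup U_-^{-\sigma_-(L)} \sqcup \int_{\pi_0 L} \chi^{-1}_{\pi_0 L}\, \ZK(L^\nu\cup\gamma).
\]
For group-likeness I use four standard ingredients: the Kontsevich invariant $\ZK$ sends every $q$-tangle to a group-like element; the normalization $\nu$ is group-like; the PBW map $\chi^{-1}$ is a graded coalgebra isomorphism and hence preserves group-likeness; and the formal Gaussian integral of a group-like Gaussian element is again group-like. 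Since $U_\pm$ themselves are group-like (apply the same reasoning to the $(\pm1)$-framed unknot), these facts together imply that $\chi^{-1}_{\pi_0\gamma} Z(B,\gamma)$ is group-like. By Lemma~\ref{lem:group-like} its $s$-reduction has the form $[M/2]$ for a unique symmetric matrix $M$ indexed by $\pi_0\gamma$.

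To identify $M$ with $\Lk_B(\gamma)$ I track struts through the construction. The strut part of $\chi^{-1}\ZK(L^\nu\cup\gamma)$ is the exponential of one half the linking matrix of $L\cup\gamma$ viewed as a $q$-tangle in $[-1,1]^3$, with self-linking read from the framings and the $\nu$-normalization fixing the convention on $L$. At strut level, the formal Gaussian integral along $\pi_0 L$ amounts to a Schur complement against the $L$-block, while the factors $U_\pm^{-\sigma_\pm(L)}$ cancel the residual self-linking of $L$. The resulting matrix is precisely the linking matrix of $\gamma$ in the surgered manifold $[-1,1]^3_L = B$ computed using the canonical closure inherited from $\btT_q\hookrightarrow\T_q\Cub$: $\cap$- and $\cup$-arcs for $\gamma^\pm$, and a braid $\beta$ with $\pi(\beta)=\sigma^{-1}$ closing $\gamma^0$. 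Subtracting the crossing contribution $O_{g_++g_-}\oplus\sigma^{-1}\cdot\Cr(\beta)$ from this matrix is exactly the definition of $\Lk_B(\gamma)$, giving $M=\Lk_B(\gamma)$.

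The only genuinely new step compared with \cite{CHM08} is verifying that the crossings of $\beta$ contribute precisely $\sigma^{-1}\cdot\Cr(\beta)$ to the strut-level linking computation, with the indicated action of $\sigma$ permuting rows and columns; this is the main obstacle, since it requires a direct analysis of the crossings of the closure arcs in $S^3\setminus\Int[-1,1]^3$ and no analogous check appears in \cite{CHM08}, where vertical components are absent. Once this local computation is done, independence from the choice of $\beta$ among braids with $\pi(\beta)=\sigma^{-1}$ is automatic, because different choices differ by elements of the framed pure braid group and the resulting change in the linking matrix is exactly the correction term.
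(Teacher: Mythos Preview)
Your approach matches the paper's, which simply defers to \cite[Lemma~3.17]{CHM08} and notes that ``the only difference is that the definition of $\Lk_B(\gamma)$ is extended.'' Two points in your middle paragraph should be corrected, though neither breaks the overall argument.

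First, $U_\pm\in\A(\emptyset)$ carries no struts at all (there are no colors), so the factors $U_\pm^{-\sigma_\pm(L)}$ contribute nothing to the $s$-reduction rather than ``cancelling residual self-linking of $L$''; the Schur complement alone already produces the correct $\gamma$-block.

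Second, and more importantly, the embedding $\btT_q\hookrightarrow\T_q\Cub$ involves no closure whatsoever---it merely records the endpoint pattern $\bullet\mapsto(+-)$, $\circ\mapsto+$. What the $s$-reduction of $\chi^{-1}Z(B,\gamma)$ gives you directly is the \emph{open} linking matrix of the tangle $\gamma$ in $B$ (half the signed crossing count between strands, framings on the diagonal). The closure by $\cap$-, $\cup$-arcs and a braid $\beta$ enters only through the \emph{definition} of $\Lk_B(\gamma)$: since the $\cap/\cup$-arcs add no crossings while $\beta$ contributes exactly $\sigma^{-1}\!\cdot\!\Cr(\beta)$, one obtains
\[
M \;=\; \Lk_{\widehat B}(\widehat\gamma)-\bigl(O_{g_++g_-}\oplus\sigma^{-1}\!\cdot\!\Cr(\beta)\bigr) \;=\; \Lk_B(\gamma).
\]
Your text reads as though $M$ equals the \emph{closed} linking matrix $\Lk_{\widehat B}(\widehat\gamma)$ and then subtraction produces $\Lk_B(\gamma)$, which would yield $M\neq\Lk_B(\gamma)$; the direction is reversed. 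Once the logic is straightened this way, your final paragraph---verifying that the braid $\beta$ contributes precisely $\sigma^{-1}\!\cdot\!\Cr(\beta)$ and that the result is independent of the choice of $\beta$---is exactly the new check the paper alludes to.
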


\section{Extension of the LMO functor}\label{sec:ExtLMOfunc}
In this section, we construct categories $\LCob_q$, $\tsA$ and an extension of the LMO functor $\Zt\colon \LCob_q \to \tsA$.

\subsection{$q$-cobordisms}\label{subsec:qCob}
We refine cobordisms by replacing the monoid $\Mon(\bullet,\circ)$ with the magma $\Mag(\bullet,\circ)$.
Let $w \in \Mag(\bullet,\circ)$.
We denote by $F_w$ the compact surface $F_{w'}$ defined in Section~\ref{subsec:Cob}, where $w'$ is the word in $\Mon(\bullet,\circ)$ obtained from $w$ by forgetting its parenthesization. 
In the same manner, we use the notation $R^{w_+}_{w_-,\sigma}$.

\begin{definition}\label{def:qCob}
 Let $w_+, w_- \in \Mag(\bullet,\circ)$ with $|w_+^\circ|=|w_-^\circ|$.
 A \emph{$q$-cobordism} from $F_{w_+}$ to $F_{w_-}$ is a cobordism from $F_{w_+}$ to $F_{w_-}$ equipped with the parenthesizations of $w_+$ and $w_-$.
\end{definition}

We define the category $\Cob_q$ of $q$-cobordisms by the same way as the category $\Cob$ in Section~\ref{subsec:Cob}.
Similarly, the wide subcategory $\LCob_q$ of \emph{Lagrangian $q$-cobordisms} is obtained, which is the domain of an extension of the LMO functor.

Furthermore, we set $Z(M):=Z(\D^{-1}(M))$ for a $q$-cobordism $M=(M,\sigma,m)$, where the functor $\D$ has been defined in Section~\ref{subsec:BTtangle}, and $Z(M)$ is called the \emph{unnormalized LMO invariant} of $M$ in \cite{CHM08}.

\subsection{Top-substantial Jacobi diagrams}\label{subsec:TopSubstantial}
An element $x \in \A(\fc{g}^{+}\cup\fc{f}^{-}\cup\fc{n}^0)$ is said to be \emph{top-substantial} if $x$ is $\fc{g}^+$-substantial.
Recall that the term $S$-substantial is defined in Section~\ref{subsec:GaussianInt}.

We define the strict monoidal category $\tsA$ as follows:
The set of objects is the set $\Z_{\ge0}^2$ of pairs of non-negative integers.
A morphism from $(g,n)$ to $(f,n)$ is a pair $(x,\sigma)$, where $x \in \A(\fc{g}^{+}\cup\fc{f}^{-}\cup\fc{n}^0)$ is top-substantial and $\sigma \in \SS_n$.
There is no morphism from $(g,n)$ to $(f,m)$ if $n \neq m$, and we simply denote $\tsA((g,n),(f,n))$ by $\tsA(g,f,n)$.
The composition of $(x,\sigma) \in \tsA(g,f,n)$ and $(y,\tau) \in \tsA(h,g,n)$ is defined to be
\[\left(
\chi_{\fc{n}^0}^{-1} \circ \chi_{\fc{n}^0,\fc{n}^{0'}} \ang{
\left(x \middle/ \parbox{5.6em}{$i^{+}\mapsto i^\ast,\\ i^0\mapsto \tau^{-1}(i)^0$}\right),
\left(y \middle/ \parbox{3.8em}{$i^{-}\mapsto i^\ast,\\ i^0\mapsto i^{0'}$}\right)
}_{\fc{g}^\ast}, \sigma\tau
\right),\]
where two kinds of $\chi$'s have been defined in Section~\ref{subsec:JacobiDiag}.
The identity of $(g,n)$ is $\left[\sum\limits_{i=1}^g \strutgraph{i^+}{i^-}\right]$.
The tensor product of $(x,\sigma) \in \tsA(g,f,n)$ and $(y,\tau) \in \tsA(g',f',n')$ is defined to be
\[\left(
x\sqcup(y/j^{\pm}\mapsto(n+j)^{\pm},j^0\mapsto(n+j)^0), \sigma\oplus\tau
\right),\]
where $\sigma\oplus\tau \in \SS_{n+n'}$ maps $i$ to $\sigma(i)$ (resp.\ $n+\tau(i-n)$) if $i\leq n$ (resp.\ $i>n$).
Finally, $(g,n)\otimes(g',n'):=(g+g',n+n')$ and the unit is $(0,0)$.
Note that we omit $\sigma$ from $(x,\sigma)$ when no confusion can arise.

Let us describe the above composition law concretely.

\begin{lemma}[see {\cite[Lemma 4.4]{CHM08}}]\label{lem:PreComposition}
 Let $x=(x,\sigma)\in\tsA(g,f,n)$, $y=(y,\tau)\in\tsA(h,g,n)$ and let $C=(c_{i^-,j^+})$ be a $\fc{f}^{-}\times\fc{g}^{+}$ matrix that is regarded as a linear map $C\colon \Q\fc{g}^{+} \to \Q\fc{f}^{-}$.
 Then $([C]\sqcup x)\circ y$ is equal to
 \[\chi_{\fc{n}^0}^{-1}\chi_{\fc{n}^0,\fc{n}^{0'}}\ang{(x/j^{+}\mapsto j^\ast, j^0\mapsto \tau^{-1}(j)^{0}), (y/j^{-}\mapsto j^\ast +Cj^{+},j^0\mapsto j^{0'})}_{\fc{g}^\ast},\]
 where $(y/j^{-}\mapsto Cj^+) := \sum_{i=1}^f (c_{i^-,j^+}y / j^-\mapsto i^-)$.
 
 Similarly, let $D$ be a $\fc{g}^-\times\fc{h}^+$ matrix. Then $x\circ([D]\sqcup y)$ is equal to
 \[\chi_{\fc{n}^0}^{-1}\chi_{\fc{n}^0,\fc{n}^{0'}}\ang{(x/i^{+}\mapsto i^\ast +Di^{-}, i^0\mapsto \tau^{-1}(i)^{0}), (y/i^-\mapsto i^\ast,i^0\mapsto i^{0'})}_{\fc{g}^\ast}.\]
\end{lemma}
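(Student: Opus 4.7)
The plan is to reduce the claim to a combinatorial identity about the formal exponential of struts, and then apply it. Start by applying the definition of composition in $\tsA$ to the left-hand side:
\[
([C]\sqcup x) \circ y \;=\; \chi_{\fc{n}^0}^{-1}\,\chi_{\fc{n}^0,\fc{n}^{0'}} \ang{([C]\sqcup x)/\phi,\; y/\psi}_{\fc{g}^\ast},
\]
where $\phi$ denotes the substitution $j^+\mapsto j^\ast,\; j^0\mapsto \tau^{-1}(j)^0$ and $\psi$ denotes $j^-\mapsto j^\ast,\; j^0\mapsto j^{0'}$. Since $\phi$ only acts on $\fc{g}^+\cup\fc{n}^0$ colors, and colored substitutions commute with disjoint union, one has $([C]\sqcup x)/\phi = ([C]/\phi)\sqcup (x/\phi)$, and $[C]/\phi$ is exactly the series of struts $\exp_\sqcup\!\bigl(\sum_{i,j} c_{i^-,j^+}\,\strutgraph{j^\ast}{i^-}\bigr) \in \A(\fc{f}^-\cup\fc{g}^\ast)$. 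Comparing with the target formula, it therefore suffices to establish the identity
\[
\ang{[C]/\phi \sqcup u,\; v}_{\fc{g}^\ast} \;=\; \ang{u,\; v/(j^\ast\mapsto j^\ast + Cj^+)}_{\fc{g}^\ast}
\]
and then specialize to $u=x/\phi$ and $v=y/\psi$; here $Cj^+$ stands for $\sum_i c_{i^-,j^+}\,i^-$, and the induced substitution on $v = y/\psi$ coincides with the substitution $j^-\mapsto j^\ast + Cj^+,\; j^0\mapsto j^{0'}$ applied to $y$.

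To prove this identity, expand the exponential so that a generic term of $[C]/\phi$ is a disjoint union of struts $\bigsqcup_{i,j}\bigl(\strutgraph{j^\ast}{i^-}\bigr)^{\sqcup m_{ij}}$ carrying the coefficient $\prod_{i,j} c_{i^-,j^+}^{m_{ij}}/m_{ij}!$. When we take the bracket $\ang{\cdot,\cdot}_{\fc{g}^\ast}$, each $j^\ast$-colored vertex of $v$ is paired either with a $j^\ast$-colored vertex of $u$, or with the $j^\ast$ end of one of the struts; in the latter case, the pairing converts that leg of $v$ into an $i^-$-colored leg with weight $c_{i^-,j^+}$. Summing over all strut multiplicities $m_{ij}$ and over all pairings, the factorials $m_{ij}!$ in the exponential cancel the multinomial overcount from permuting identical struts, and the total effect is precisely the substitution $j^\ast \mapsto j^\ast + Cj^+$ applied to $v$. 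This is the same manipulation as in \cite[Lemma~4.4]{CHM08}; the extra $\fc{n}^0$ colors are untouched by the struts and by the bracket over $\fc{g}^\ast$, so the outer operator $\chi_{\fc{n}^0}^{-1}\chi_{\fc{n}^0,\fc{n}^{0'}}$ simply passes through unchanged.

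The second formula, for $x\circ([D]\sqcup y)$, follows from the mirror image of the same identity: the struts in $[D]/\psi$ have one end in $\fc{g}^\ast$ (former $\fc{g}^-$) and one in $\fc{h}^+$, and the pairing over $\fc{g}^\ast$ now converts each $i^\ast$-leg of $u = x/\phi$ into an $\fc{h}^+$-leg according to $D$, realizing the substitution $i^+\mapsto i^\ast + Di^-$ in $x$. The main obstacle in both halves of the proof is purely book-keeping: one must track the several successive color substitutions, the asymmetric roles of the two strut endpoints, and verify that the exponential-of-struts/pairing combinatorics reproduce the advertised affine substitution. None of this is conceptually difficult once the computation of \cite[Lemma~4.4]{CHM08} has been followed in the special case $n=0$.
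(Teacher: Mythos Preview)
Your proof is correct and follows exactly the approach one would expect from the cited \cite[Lemma~4.4]{CHM08}: unwind the definition of the composition in $\tsA$, factor out the strut exponential, and use the standard ``exponential of struts implements an affine substitution under $\langle\,\cdot\,,\,\cdot\,\rangle$'' identity. The paper itself does not supply a proof of this lemma; it simply states the result with the reference to \cite{CHM08} and moves on, so there is nothing further to compare.
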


The next lemma is proven by applying the previous lemma repeatedly.

\begin{lemma}[see {\cite[Lemma 4.5]{CHM08}}] \label{lem:Composition}
 Let $x,y$ be as above and suppose that they are decomposed as $x=[X/2]\sqcup x^Y$ and $y=[Y/2]\sqcup y^Y$, where
 \[X=\begin{pmatrix}
 O & X^{+-} & X^{+0} \\
 X^{-+} & X^{--} & X^{-0} \\
 X^{0+} & X^{0-} & X^{00}
 \end{pmatrix},\quad
 Y=\begin{pmatrix}
 O & Y^{+-} & Y^{+0} \\
 Y^{-+} & Y^{--} & Y^{-0} \\
 Y^{0+} & Y^{0-} & Y^{00}
 \end{pmatrix}.\]
 Then $x\circ y$ is equal to
\begin{align*}
 & [X^{--}/2+Y^{+-}X^{+-}+X^{-+}Y^{--}X^{+-}/2] \\
 &\quad \sqcup\chi_{\fc{n}^0}^{-1}\chi_{\fc{n}^0,\fc{n}^{0'}}\left([\tau^{-1}(X^{00}/2+X^{-0})+Y^{00}/2+Y^{+0}]\sqcup\ang{L,R}_{\fc{g}^\ast}\right),
\end{align*}
where
\begin{align*}
 L &:= \left([\tau^{-1}X^{+0}]\sqcup x^Y/i^{+} \mapsto i^\ast +X^{-+}Y^{--}i^{-} +Y^{+-}i^{-}, i^0 \mapsto \tau^{-1}(i)^0 \right), \\
 R &:= \left([Y^{--}/2]/i^{-} \mapsto i^\ast\right)\sqcup\left([Y^{-0}]\sqcup y^Y/i^{-} \mapsto i^\ast +X^{-+}i^{+}, j^0 \mapsto j^{0'}\right).
\end{align*}
\end{lemma}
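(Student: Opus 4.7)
\emph{Proof proposal.} The plan is to derive the stated formula by iterated application of Lemma~\ref{lem:PreComposition}, which is the ``one-block'' version of the composition law that isolates a single matrix of $+-$ struts. Since $x$ is top-substantial we have $X^{++}=0$, and similarly $Y^{++}=0$; together with symmetry of $X$ and $Y$, this lets me split
\[
\tfrac{X}{2} = X^{+-} + \tfrac{X^{--}}{2} + X^{+0} + X^{-0} + \tfrac{X^{00}}{2},
\qquad
\tfrac{Y}{2} = Y^{+-} + \tfrac{Y^{--}}{2} + Y^{+0} + Y^{-0} + \tfrac{Y^{00}}{2},
\]
and correspondingly factor $x=[X^{+-}]\sqcup[X^{--}/2]\sqcup[X^{+0}]\sqcup[X^{-0}]\sqcup[X^{00}/2]\sqcup x^{Y}$ and analogously for $y$.

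First, I would apply the first form of Lemma~\ref{lem:PreComposition} with $C=X^{+-}$ to pull the $+-$ struts out of $x$: this replaces every $j^-$-vertex of the remaining piece of $x$ by $j^\ast+X^{+-}$-contractions, and more importantly, when eventually paired against $y$ it substitutes every $j^-$-label on $y$ by $j^\ast+X^{+-}j^+$ from the $y$-side. Then I would apply the second form of Lemma~\ref{lem:PreComposition} with $D=Y^{+-}$ (viewing $y$ as having a Gaussian $+-$ block of its own), replacing every $i^+$-vertex of the remaining piece of $x$ by $i^\ast+Y^{+-}i^-$. The two substitutions interact through the intermediate pairing $\langle\cdot,\cdot\rangle_{\fc{g}^\ast}$: contracting $X^{-+}$ across a $Y^{--}$-strut and back through $X^{+-}$ produces the strut block $[X^{-+}Y^{--}X^{+-}/2]$, while direct contraction of $Y^{+-}$ against $X^{+-}$ produces $[Y^{+-}X^{+-}]$. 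Adding in the untouched $[X^{--}/2]$ gives the leading Gaussian factor of the stated formula.

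Next, I would track the $0$-colored legs. The relabelings $i^0\mapsto\tau^{-1}(i)^{0}$ on $x$ and $i^0\mapsto i^{0'}$ on $y$ are exactly the ones built into the definition of $\circ_{\tsA}$ in Section~\ref{subsec:TopSubstantial}. Because the pairing $\langle\cdot,\cdot\rangle_{\fc{g}^\ast}$ touches only the $+$- and $-$-labeled vertices, the ``pure $0$'' pieces $[\tau^{-1}(X^{00}/2+X^{-0})]$ from $x$ and $[Y^{00}/2+Y^{+0}]$ from $y$ pass straight through the pairing and assemble into the single strut factor $[\tau^{-1}(X^{00}/2+X^{-0})+Y^{00}/2+Y^{+0}]$ inside $\chi_{\fc{n}^0}^{-1}\chi_{\fc{n}^0,\fc{n}^{0'}}$. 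The mixed factors $[X^{+0}]$ and $[Y^{-0}]$, whose $+$- or $-$-endpoints are consumed by $\langle\cdot,\cdot\rangle_{\fc{g}^\ast}$, must be left inside the pairing; after the substitutions $i^+\mapsto i^\ast+X^{-+}Y^{--}i^-+Y^{+-}i^-$ and $i^-\mapsto i^\ast+X^{-+}i^+$ they appear precisely as the $[\tau^{-1}X^{+0}]$ summand inside $L$ and the $[Y^{-0}]$ summand inside $R$. What remains of $x^Y$ and $y^Y$ after these substitutions is exactly the rest of $L$ and $R$ in the statement.

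The main obstacle is the combinatorial bookkeeping. One must verify that iterating Lemma~\ref{lem:PreComposition} once with $C=X^{+-}$ and once with $D=Y^{+-}$ produces all the quadratic cross-terms listed (and no others), that the permutation $\tau^{-1}$ is applied exactly where the composition law demands (on the $0$-legs of $x$ and on the $X^{+0}$-block, which sits on the $x$-side before relabeling), and that the two normalizations $\chi_{\fc{n}^0}^{-1}$ and $\chi_{\fc{n}^0,\fc{n}^{0'}}$ interact correctly with the pure $0$-strut factor that has been pulled outside the pairing. Once these identifications are made, the formula follows by collecting the terms produced by the two applications of Lemma~\ref{lem:PreComposition}, using the top-substantial vanishing $X^{++}=Y^{++}=0$ to ensure that no additional strut contributions appear.
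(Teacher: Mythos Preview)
Your approach is correct and coincides with the paper's: the paper's own proof is the single sentence ``proven by applying the previous lemma repeatedly,'' and your outline of peeling off the $+-$ strut block $[X^{+-}]$ via the first form of Lemma~\ref{lem:PreComposition} and then $[Y^{+-}]$ via the second form, followed by bookkeeping of the $0$-labels and the secondary contractions through $Y^{--}$, is exactly that strategy made explicit. The only point worth tightening is that after the first substitution $j^{-}\mapsto j^{\ast}+X^{-+}j^{+}$ the block $[Y^{--}/2]$ spawns new $+$-ended struts, so ``repeatedly'' may literally mean one further pass of Lemma~\ref{lem:PreComposition} beyond the two you name; your parenthetical about ``contracting $X^{-+}$ across a $Y^{--}$-strut and back through $X^{+-}$'' already accounts for this, but it is cleaner to phrase it as a third application rather than an ad hoc interaction.
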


\begin{corollary}\label{cor:Composition}
 In the above lemma, if $Y=
\begin{pmatrix}
 O & I_g & O \\
 I_g & O & O \\
 O & O & O
\end{pmatrix}$
 and $\varepsilon(x)=\varepsilon(y)=\varnothing$, then $(x\circ y)^s = [X/2]$.
\end{corollary}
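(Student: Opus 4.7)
The plan is to specialize Lemma~\ref{lem:Composition} to the given form of $Y$ and then identify which contributions survive the $s$-reduction. Inserting $Y^{+-}=Y^{-+}=I_g$ and $Y^{--}=Y^{00}=Y^{+0}=Y^{-0}=0$, the formula collapses to
\[
x\circ y = [X^{--}/2+X^{+-}] \sqcup \chi_{\fc{n}^0}^{-1}\chi_{\fc{n}^0,\fc{n}^{0'}}\!\bigl([\tau^{-1}(X^{00}/2+X^{-0})] \sqcup \ang{L,R}_{\fc{g}^\ast}\bigr),
\]
where $L=[\tau^{-1}X^{+0}]\sqcup(x^Y/i^+\mapsto i^\ast+i^-,\ i^0\mapsto\tau^{-1}(i)^0)$ and $R=(y^Y/i^-\mapsto i^\ast+X^{-+}i^+,\ j^0\mapsto j^{0'})$. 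The two explicit strut factors already furnish the $X^{--}$ and $X^{+-}$ blocks (from outside the $\chi$'s) and the $X^{00}$ and $X^{-0}$ blocks (from inside) of $X/2$, so the task reduces to extracting the remaining $X^{+0}$ contribution from the pairing $\ang{L,R}_{\fc{g}^\ast}$.

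By definition of the $Y$-reduction, $x^Y$ and $y^Y$ contain no struts, so every univalent vertex of $x^Y$ or $y^Y$ is attached to a trivalent vertex. Consequently any $\fc{g}^\ast$-contraction against such a vertex produces an edge incident to a trivalent vertex, which is killed by the $s$-reduction. The only strut-producing contractions are therefore those pairing the strut factor $[\tau^{-1}X^{+0}]$ in $L$ against the empty-diagram component of $R$, coupled with the empty-diagram component of $x^Y$ in $L$. The counit hypotheses $\varepsilon(x)=\varepsilon(y)=\varnothing$ yield $\varepsilon(x^Y)=\varepsilon(y^Y)=1$, so these empty components enter with coefficient $1$. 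The $\ast$-vertex counts on the two sides must match and are necessarily zero, which forces the branch $i^+\mapsto i^-$ of the substitution on $[\tau^{-1}X^{+0}]$ and produces exactly the $X^{+0}$ block of $X/2$ in the output.

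Finally, Lemma~\ref{lem:filter} shows that $\chi_{\fc{n}^0}^{-1}$ and $\chi_{\fc{n}^0,\fc{n}^{0'}}$ act on strut-only inputs as the identity modulo terms of strictly higher internal filter, and those corrections carry trivalent vertices and are therefore annihilated by the $s$-reduction. Collecting the $X^{--}$, $X^{+-}$, $X^{+0}$, $X^{-0}$ and $X^{00}$ blocks (with the transposed blocks automatically supplied by the symmetry of struts) one recovers $[X/2]$. The main technical obstacle is the pairing analysis in the second paragraph: verifying that the various branches of the substitutions $i^+\mapsto i^\ast+i^-$ and $i^-\mapsto i^\ast+X^{-+}i^+$ contribute zero to the $s$-reduction except for the single surviving branch, and that the $\tau^{-1}$-relabeling on the $0$-indices together with the stacking performed by $\chi_{\fc{n}^0}^{-1}\chi_{\fc{n}^0,\fc{n}^{0'}}$ aligns with the indexing of $X$ in $\tsA(h,f,n)$ as dictated by the composition law.
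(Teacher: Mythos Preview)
Your approach is essentially the same as the paper's: specialize Lemma~\ref{lem:Composition} to the given $Y$, observe that $x^Y$ and $y^Y$ contribute nothing to the $s$-reduction (their univalent vertices are all adjacent to trivalent ones), extract the $X^{+0}$ block from the strut factor in $L$ via the non-$\ast$ branch of the substitution, and invoke Lemma~\ref{lem:filter} to dispose of the corrections introduced by the $\chi$-maps. The paper's proof is terser but follows exactly this line.

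One notational slip: when $Y^{+-}=I_g$, the substitution in $L$ should read $i^+\mapsto i^\ast+i^+$ (an output $+$-color), not $i^+\mapsto i^\ast+i^-$. In the convention of Lemma~\ref{lem:PreComposition}, the expression $Y^{+-}i^-$ denotes a linear combination of $j^+$'s, so with $Y^{+-}=I_g$ one gets $i^+$. This is why the surviving branch lands in the $X^{+0}$ block rather than $X^{-0}$, which is the conclusion you correctly state; your argument for \emph{why} only the non-$\ast$ branch survives is sound, only the label of that branch is off.
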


\begin{proof}
 By the previous Lemma, we have
\begin{align*}
 x\circ y &= [X^{--}/2+X^{+-}]\sqcup\chi_{\fc{n}^0}^{-1}\chi_{\fc{n}^0,\fc{n}^{0'}}\bigl([X^{00}/2+X^{-0}/2] \\
 &\quad \sqcup\ang{([X^{+0}]\sqcup x^Y/i^{+}\mapsto i^\ast+i^{+}),(y^Y/i^{-}\mapsto i^\ast +X^{-+}i^{+})}_{\fc{g}^\ast}\bigr).
\end{align*}
 Hence, using Lemma~\ref{lem:filter}, $(x\circ y)^s$ is equal to
 \[[X^{--}/2+X^{+-}]\sqcup\chi_{\fc{n}^0}^{-1}\chi_{\fc{n}^0,\fc{n}^{0'}}\left([X^{00}/2+X^{-0}/2]\sqcup[X^{+0}]\right) = [X/2].\]
\end{proof}

\subsection{Construction of an extension of the LMO functor}\label{subsec:ConstExtLMOfunc}
In \cite{CHM08}, a certain element $\TT_g \in \A(\fc{g}^+\cup\fc{g}^-)$ is introduced, which roughly corresponds to the bottom-top tangle $T_w$ defined in Section \ref{subsec:BTtangle}.
We first set
\[\lambda(x,y;r):=\chi_{\{r\}}^{-1}\left(\BCHgraph{x}{y}{r}\right) \in \A(\emptyset,\{x,y,r\}),\]
where \rotatebox{90}{$[\ ]$} is the map $\exp_\natural\colon \A_{>0}(\downarrow) \to \A(\downarrow)$.
Next, $\TT(x_+,x_-) \in \A(\{x_+,x_-\})$ is defined to be
\[U_+^{-1} \sqcup U_-^{-1} \sqcup \int_{\{r_+,r_-\}} \ang{\lambda(y_-,x_-;r_-)\sqcup\lambda(x_+,y_+;r_+),\chi^{-1}Z(T_\bullet^\nu)}_{\{y_+,y_-\}}.\]
Finally, $\TT_g$ is defined by
\[\TT_g := \TT(1^+,1^-) \sqcup \dots \sqcup \TT(g^+,g^-) \in \A(\fc{g}^+\cup\fc{g}^-).\]
Since $\TT(x_+,x_-)$ is group-like and its $s$-reduction is equal to $\left[\strutgraph{x_+}{x_-}\right]$ (\cite[Lemma~4.9]{CHM08}), $\TT_g$ is also group-like and its $s$-reduction is equal to $[I_g]$.

The proof of the following key lemma is similar to \cite[Lemma~4.10]{CHM08}, however, we must pay attention to the additional map $\chi_{\fc{n}^0}^{-1} \circ \chi_{\fc{n}^0,\fc{n}^{0'}}$ in the definition of the composition law in the category $\tsA$.

\begin{lemma} \label{lem:key}
 Let $B=(B,\gamma)$, $C=(C,\omega)$ be bottom-top $q$-tangles of type $(v,u,\sigma)$, $(w,v,\tau)$ respectively and suppose that $\D(B)$, $\D(C)$ are Lagrangian.
 Then we have
 \[\left(\chi_{\pi_0(\gamma^{-}\cup\omega^{+}\cup\gamma^0\omega^0)}^{-1} Z(B\circ C), \sigma\tau\right) = (\chi_{\pi_0 \gamma}^{-1}Z(B), \sigma) \circ \TT_{|v^\bullet|} \circ (\chi_{\pi_0 \omega}^{-1}Z(C), \tau),\]
 where $\TT_{|v^\bullet|}=(\TT_{|v^\bullet|},\Id_{\SS_{|v^\circ|}})$ is regarded as an element of $\tsA(|v^\bullet|,|v^\bullet|,|v^\circ|)$.
\end{lemma}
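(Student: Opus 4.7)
The plan is to mimic the strategy of \cite[Lemma~4.10]{CHM08} and simply track the extra data coming from the vertical ($\circ$-colored) strands. Start with surgery presentations $(L_B,\gamma')$ of $B$ and $(L_C,\omega')$ of $C$; then by the description of the composition in $\btT$ given in Section~\ref{subsec:BTtangle}, the juxtaposition $B \circ C$ has a surgery presentation in $[-1,1]^3$ with surgery link $L_B \sqcup L_C \sqcup (\gamma'^+ \cup T_{v^\bullet} \cup \omega'^-)$ and residual tangle the union of $\gamma^-$, $\omega^+$ and the concatenated vertical strands $\gamma^0 \omega^0$. Feeding this into the definition of $Z^{\mathrm{K\text{-}LMO}}$ gives $Z(B\circ C)$ as a single formal Gaussian integral over all surgery labels plus the labels of $T_{v^\bullet}$, with the integrand built from $Z^{\mathrm{K}}$ of the underlying $q$-tangle.

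Next, I would perform the integration in two stages, the inner one along the variables coming from $T_{v^\bullet}$. The Kontsevich integral of $T_{v^\bullet}$ together with the meridianal $\lambda$-factors coming from $\gamma^+$ and $\omega^-$ is, by construction of $\TT(x_+,x_-)$, precisely what produces one copy of $\TT_{|v^\bullet|}$ pairing the (future) $+$-vertices of $\chi^{-1}Z(B)$ with the (future) $-$-vertices of $\chi^{-1}Z(C)$. Carrying out this partial integration and invoking Lemma~\ref{lem:filter} to move $\chi^{-1}$ past the remaining Gaussian integrals reduces the statement to checking that the outer integration, together with the pairing along the $\fc{|v^\bullet|}^\ast$-colored vertices and the stacking of the $\circ$-indexed intervals, matches the composition formula of $\tsA$ described in Lemma~\ref{lem:PreComposition}.

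The step I expect to carry the most new content is the bookkeeping for the vertical strands, as this is exactly where our extension diverges from \cite{CHM08}. In $B \circ C$, each pair $(\gamma^0_i,\omega^0_{\sigma(i)})$ fuses into a single vertical strand indexed by the product $\sigma\tau$; on the Jacobi-diagram side, $\chi^{-1}Z$ records each such strand as $0$-colored univalent vertices, and the two collections (one from $B$, one from $C$) must be sewn together consistently with how $Z^{\mathrm{K}}$ behaves under vertical stacking of strands. I would verify, by first applying $\chi$ to both sides on the relevant $0$-colored vertices and then stacking the resulting intervals, that the right-hand side of the claimed equality is exactly $\chi_{\fc{n}^0}^{-1} \circ \chi_{\fc{n}^0,\fc{n}^{0'}}$ applied to the naive disjoint union, which is the content of the new definition of composition in $\tsA$; this uses the group-likeness of $\chi^{-1}Z$ of a bottom-top $q$-tangle (Lemma~\ref{lem:bottom-top tangle}) together with the standard fact that $\chi$ intertwines stacking with the doubled PBW map.

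Once the vertical contributions are identified, the remaining $\pm$-part is, up to notation, a parameterized version of \cite[Lemma~4.10]{CHM08}: the Gaussian variables from $L_B \sqcup L_C$ integrate independently, the strut part gives the matrix identities appearing in Lemma~\ref{lem:Composition}, and the permutation data combines to $\sigma\tau$ because the gluing of vertical strands on the topological level is governed by the product of the permutations. Putting the three computations together yields the stated equality in $\tsA(|w^\bullet|,|u^\bullet|,|w^\circ|)$.
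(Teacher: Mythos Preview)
Your overall strategy is the paper's: take surgery presentations of $B$ and $C$, realize $B\circ C$ via the combined surgery link $K\sqcup L\sqcup T$ (with $T$ coming from $\gamma^+\cup T_v\cup\omega^-$), split the formal Gaussian integral using iterated integration and the signature identity $\sigma_\pm(S)=\sigma_\pm(K)+g+\sigma_\pm(L)$, recognize the $T$-integration as producing $\TT_g$, and then match the vertical-strand bookkeeping against the $\chi_{\fc{n}^0}^{-1}\chi_{\fc{n}^0,\fc{n}^{0'}}$ in the composition law of $\tsA$. You have correctly identified that this last step is the only genuinely new content over \cite[Lemma~4.10]{CHM08}.

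Two details should be fixed. First, your invocation of Lemma~\ref{lem:filter} is misplaced: that lemma only gives $\chi_S^{-1}(D)=\widetilde D+(\ifil>\ideg D)$, an approximation modulo higher i-filter, whereas here one needs exact identities. The paper does not invoke Lemma~\ref{lem:filter} at all; instead it computes the integrand directly by writing the stacked Kontsevich integral as a pairing involving the $\lambda$-factors $\bigsqcup_i\lambda(i^\vartriangle,i^\cup;i^\bot)\sqcup\bigsqcup_i\lambda(i^\cap,i^\triangledown;i^\top)$ and then identifies the $T$-integral of this with $\TT_g$ by the very definition of $\TT(x_+,x_-)$. Second, your strand matching is off: in $B\circ C$ the $i$th vertical component $\gamma^0_i$ of $B$ is glued to $\omega^0_{\tau^{-1}(i)}$ (not $\omega^0_{\sigma(i)}$), exactly as the paper records when it writes ``the $i$th component of $\gamma^0$ corresponds to the $\tau^{-1}(i)$th component of $\omega^0$''; this is what makes the relabeling $i^0\mapsto\tau^{-1}(i)^0$ in the $\tsA$-composition come out correctly. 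With these two corrections your outline becomes the paper's proof.
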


\begin{proof}
 Let $(K,\gamma)$, $(L,\omega)$ be surgery presentations of $B$, $C$ respectively.
 Let $T^-$ (resp.\ $T^+$) be the framed link in $B\circ C$ obtained from $\gamma^+$ and the bottom components of $T_v$ (resp.\ $\omega^-$ and the top components of $T_v$) by gluing their boundaries.
 Finally, let $T:=T^-\cup T^+$ and $S:=K\cup T\cup L$.
 Then $(S,\gamma^{-}\cup\omega^{+}\cup\gamma^0\omega^0)$ is a surgery presentation of $B\circ C$, and thus
 \[Z(B\circ C) = U_+^{-\sigma_+(S)} \sqcup U_-^{-\sigma_-(S)} \sqcup \int_{\pi_0 S} \chi_{\pi_0 S}^{-1}\ZK\left( S^\nu \cup (\gamma^{-}\cup\omega^{+}\cup\gamma^0\omega^0) \right).\]
 Here, the above integration is computed as follows:
\begin{align*}
 & \int_{\pi_0 S}\chi_{\pi_0 S}^{-1} \left(\ZK(K^\nu\cup\gamma)\natural (Z(T_w)\sharp\nu^{\otimes 2g}) \natural\ZK(L^\nu\cup\omega)\right) \\
 &= \int_{\pi_0 T}\chi_{\pi_0 T}^{-1}\int_{\pi_0 K}\chi_{\pi_0 K}^{-1}\ZK(K^\nu\cup\gamma)\natural (Z(T_w)\sharp\nu^{\otimes 2g}) \natural\int_{\pi_0 L}\chi_{\pi_0 L}^{-1}\ZK(L^\nu\cup\omega),
\end{align*}
 where $g:=|v^\bullet|$, and $\natural$ means the composition in the category $\A$.
 It follows from the identity $\sigma_\pm(S) =\sigma_\pm(K) +g +\sigma_\pm(L)$ (\cite[(4-2)]{CHM08}) that 
 \[Z(B\circ C) = U_{+}^{-g}\sqcup U_{-}^{-g}\sqcup\int_{\pi_0 T}\chi_{\pi_0 T}^{-1} \left( Z(K^\nu\cup\gamma)\natural (Z(T_w)\sharp\nu^{\otimes 2g}) \natural Z(L^\nu\cup\omega) \right).\]
 Therefore, $\chi_{\pi_0(\gamma^{-}\cup\omega^{+}\cup\gamma^0\omega^0)}^{-1} Z(B\circ C)$ is equal to
\begin{align*}
 & U_{+}^{-g}\sqcup U_{-}^{-g}\sqcup \int_{\pi_0 T} \chi_{\pi_0(T\cup\gamma^0\omega^0)}^{-1} \left( \chi_{\pi_0 \gamma^{-}}^{-1}Z(K^\nu\cup\gamma)\natural (Z(T_w)\sharp\nu^{\otimes 2g}) \natural\chi_{\pi_0 \omega^{+}}^{-1}Z(L^\nu\cup\omega) \right).
\end{align*}
 
 Let us introduce notation to calculate the above integration:
\begin{align*}
 \pi_0(T^{-}) &= \fc{g}^\bot, & \pi_0(T_{w^\bullet}^{+}) &= \fc{g}^\triangledown, & \pi_0(\omega^{+}) &= \fc{h}^+, \\
 \pi_0(\gamma^{+}) &= \fc{f}^\cup, & \pi_0(T_{w^\bullet}^{-}) &= \fc{g}^\vartriangle, & \pi_0(\omega^{-}) &= \fc{h}^\cap, \\
 \pi_0(\gamma^{-}) &= \fc{f}^{-}, & & & \pi_0(T^{+}) &= \fc{g}^\top,
\end{align*}
 where $f:=|u^\bullet|$, $h:=|w^\bullet|$.
 Then, the above integrand is computed as follows:
\begin{align*}
 & \chi_{\pi_0(T\cup\gamma^0\omega^0)}^{-1} \left( \chi_{\pi_0 \gamma^+}\chi_{\pi_0 \gamma^\pm}^{-1}Z(B) \natural \chi_{\pi_0 T_{w^\bullet}}\chi_{\pi_0 T_{w^\bullet}}^{-1}(Z(T_{w^\bullet}^\nu)\downarrow^{\fc{n}^0}) \natural \chi_{\pi_0 \omega^-}\chi_{\pi_0 \omega^\pm}^{-1}Z(C) \right) \\
 &= \chi_{\pi_0(T\cup\gamma^0\omega^0)}^{-1} \biggl\langle \bigsqcup_{i=1}^g \BCHgraph{i^\vartriangle}{i^\cup}{i^\bot} \sqcup \bigsqcup_{i=1}^g \BCHgraph{i^\cap}{i^\triangledown}{i^\top}\ , \chi_{\pi_0 T_{w^\bullet}}^{-1}Z(T_{w^\bullet}^\nu) \sqcup \chi_{\pi_0 \gamma^\pm}^{-1}Z(B) \natural \chi_{\pi_0 \omega^\pm}^{-1}Z(C) \biggr\rangle_{\parbox{2em}{\scriptsize $\mathord{\vartriangle}\mathord{\cap} \\[-4pt] \mathord{\cup}\mathord{\triangledown}$}} \\ 
 &= \chi_{\pi_0(\gamma^0\omega^0)}^{-1} \ang{ \ang{ \Lambda, \chi_{\pi_0 T_{w^\bullet}}^{-1}Z(T_{w^\bullet}^\nu) }_{\vartriangle \triangledown}, \chi_{\pi_0 \gamma^\pm}^{-1}Z(B) \natural \chi_{\pi_0 \omega^\pm}^{-1}Z(C) }_{\cup \cap} \\
 &= \chi_{\pi_0(\gamma^0\omega^0)}^{-1} \ang{ \ang{ \Lambda, \chi_{\pi_0 T_{w^\bullet}}^{-1}Z(T_{w^\bullet}^\nu) }_{\vartriangle \triangledown}, \chi_{\pi_0 \gamma^0, \pi_0 \omega^0} \left(\chi_{\pi_0 \gamma}^{-1}Z(B) \sqcup \chi_{\pi_0 \omega}^{-1}Z(C)\right) }_{\cup \cap},
\end{align*}
 where the $i$th component of $\gamma^0$ corresponds to the $\tau^{-1}(i)$th component of $\omega^0$, and $\Lambda \in \A(\fc{g}^\vartriangle \cup \fc{f}^\cup \cup \fc{g}^\bot \cup \fc{h}^\cap \cup \fc{g}^\triangledown \cup \fc{g}^\top)$ is defined by
 \[\Lambda := \bigsqcup_{i=1}^g \lambda(i^\vartriangle,i^\cup;i^\bot) \sqcup \bigsqcup_{i=1}^g \lambda(i^\cap,i^\triangledown;i^\top).\]
 Therefore, the above integration is written as follows:
\[\chi_{\pi_0(\gamma^0\omega^0)}^{-1} \chi_{\pi_0 \gamma^0, \pi_0 \omega^0} \ang{ \int_{\pi_0 T} \ang{\Lambda, \chi_{\pi_0 T_{w^\bullet}}^{-1}Z(T_{w^\bullet}^\nu) }_{\vartriangle \triangledown}, \chi_{\pi_0 \gamma}^{-1}Z(B) \sqcup \chi_{\pi_0 \omega}^{-1}Z(C) }_{\cup \cap}.\]
 Here,
\begin{align*}
 & U_{+}^{-g} \sqcup U_{-}^{-g} \sqcup \int_{\pi_0 T} \ang{\Lambda, \chi_{\pi_0 T_{w^\bullet}}^{-1}Z(T_{w^\bullet}^\nu) }_{\vartriangle \triangledown} \\
 &= \bigsqcup_{i=1}^g U_{+}^{-1} \sqcup U_{-}^{-1} \sqcup \int_{\{i^\bot,i^\top\}} \ang{ \lambda(i^\vartriangle,i^\cup;i^\bot) \sqcup \lambda(i^\cap,i^\triangledown;i^\top)} \\
 &= \bigsqcup_{i=1}^g \TT(i^\cap,i^\cup) 
 = \left(\TT_g \middle/ \parbox{40pt}{$i^+ \mapsto i^\cap \\ i^- \mapsto i^\cup$} \right). 
\end{align*}
 Therefore, the left-hand side of this lemma is equal to
\begin{align*}
 \chi_{\pi_0(\gamma^0\omega^0)}^{-1} \chi_{\pi_0 \gamma^0, \pi_0 \omega^0} \ang{ \left(\TT_g \middle/ \parbox{40pt}{$i^+ \mapsto i^\cap \\ i^- \mapsto i^\cup$} \right), \chi_{\pi_0 \gamma}^{-1}Z(B) \sqcup \chi_{\pi_0 \omega}^{-1}Z(C) }_{\cup \cap}.
\end{align*}
 
 On the other hand, the right-hand side of this lemma is obtained as follows:
 Connect the $i^-$-colored vertices and $i^+$-colored vertices in $\TT_g$ with the $i^\cup$-colored vertices in $\chi_{\pi_0 \gamma}^{-1}Z(B)$ and $i^\cap$-colored vertices in $\chi_{\pi_0 \omega}^{-1}Z(C)$ respectively and apply the composite map $\chi_{\pi_0(\gamma^0\omega^0)}^{-1} \chi_{\pi_0 \gamma^0, \pi_0 \omega^0}$.
\end{proof}

We can now define $\Zt$, which is the main purpose of this paper.

\begin{definition}
 Let $v, w \in \Mag(\bullet,\circ)$ with $|v^\circ|=|w^\circ|=n$ and let $f:=|v^\bullet|$, $g:=|w^\bullet|$.
 The \emph{normalized LMO invariant} $\Zt$ of $(M,\sigma,m) \in \LCob_q(w,v)$ is defined by
 \[\Zt(M,\sigma,m) := \left(\chi_{\pi_0 \gamma}^{-1} Z(B,\gamma), \sigma\right) \circ_{\tsA} \left(\TT_g, \Id_{\SS_n}\right) \in \tsA(g,f,n).\]
\end{definition}

Let $\Zt^s(M)$ (resp.\ $\Zt^Y(M)$) denote the $s$- (resp.\ $Y$-) reduction of $\Zt(M)=\Zt(M,\sigma,m)$.
The next lemma follows from Lemma~\ref{lem:bottom-top tangle} and Corollary~\ref{cor:Composition}~(1).

\begin{lemma}
 $\Zt(M,\sigma,m)$ is group-like and $\Zt^s(M,\sigma,m) = [\Lk(M)/2]$.
\end{lemma}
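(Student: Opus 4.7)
The plan is to translate the statement into a direct application of Lemma~\ref{lem:bottom-top tangle} and Corollary~\ref{cor:Composition}. Set $(B,\gamma) := \D^{-1}(M,\sigma,m)$ and $g := |w^\bullet|$, so that by definition
\[\Zt(M) = (x,\sigma) \circ_{\tsA} (\TT_g, \Id_{\SS_n}), \qquad x := \chi^{-1}_{\pi_0 \gamma} Z(B,\gamma).\]
Since $M$ is Lagrangian, Lemma~\ref{lem:Lagrangian} ensures that $B$ is a homology cube and that $\Lk_B(\gamma^+) = O$; the vanishing of this block says exactly that $x$ is top-substantial. Lemma~\ref{lem:bottom-top tangle} then tells us that $x$ is group-like with $x^s = [\Lk_B(\gamma)/2]$. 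On the other hand, as recalled just before Lemma~\ref{lem:key}, $\TT_g$ is group-like with $s$-reduction $[I_g]$.

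For the identity $\Zt^s(M) = [\Lk(M)/2]$, I would invoke Corollary~\ref{cor:Composition} with $y := \TT_g$. Viewed as an element of $\tsA(g,g,n)$, $\TT_g$ carries no $0$-colored legs, so its covariance matrix is precisely
\[Y = \begin{pmatrix} O & I_g & O \\ I_g & O & O \\ O & O & O \end{pmatrix},\]
which is the block form required by the hypothesis of the corollary. Moreover $\varepsilon(x) = \varepsilon(\TT_g) = \varnothing$ is automatic from group-likeness, so the corollary delivers
\[\Zt^s(M) = (x \circ \TT_g)^s = [\Lk_B(\gamma)/2] = [\Lk(M)/2].\]

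For the group-likeness of $\Zt(M)$, I would check that every ingredient of the composition law of $\tsA$ preserves group-like elements. Label substitutions (such as $i^+ \mapsto i^\ast$ and $i^0 \mapsto \tau^{-1}(i)^0$) and the coalgebra isomorphisms $\chi_{\fc{n}^0, \fc{n}^{0'}}$ and $\chi^{-1}_{\fc{n}^0}$ are all coalgebra morphisms, hence carry group-like elements to group-like elements. The only nontrivial operation is the pairing $\ang{-,-}_{\fc{g}^\ast}$, for which one needs to show that the pairing of two group-like elements (one of them $\fc{g}^\ast$-substantial) along the common labels is itself group-like. I would establish this by writing both factors as $\exp_\sqcup$ of connected Jacobi diagrams and running the direct Hopf-algebraic argument of \cite[Lemma~4.12]{CHM08}, which adapts to our setting without change.

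The main obstacle I foresee is precisely this last point — stability of group-likeness under the pairing $\ang{-,-}_S$ — since it is the one step that does not reduce to direct inspection of coalgebra morphisms. Everything else is either definitional bookkeeping or a one-line application of Corollary~\ref{cor:Composition} and Lemma~\ref{lem:bottom-top tangle}.
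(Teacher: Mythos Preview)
Your proposal is correct and follows the same approach as the paper, which simply states that the lemma follows from Lemma~\ref{lem:bottom-top tangle} and Corollary~\ref{cor:Composition}. You have unpacked exactly those two citations: Corollary~\ref{cor:Composition} applied with $y=\TT_g$ gives the $s$-reduction, and Lemma~\ref{lem:bottom-top tangle} supplies the group-likeness of the input $x$, after which the stability of group-likeness under the operations defining $\circ_{\tsA}$ (relabelings, $\chi$-maps, and the pairing $\ang{-,-}_S$) is precisely the argument of \cite[Lemma~4.12]{CHM08} that you invoke. The paper leaves this last verification implicit, so your write-up is in fact more complete than the paper's one-line justification.
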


\begin{theorem}\label{thm:Zt}
 The normalized LMO invariant defines a tensor-preserving functor
 \[\Zt \colon \LCob_q\to\tsA,\]
 which is called \emph{(}an extension of\/\textup{)} the \emph{LMO functor}, that is, $\Zt$ satisfies the following conditions:
\begin{enumerate}
 \item $\Zt(\Id_w)=\Id_{(|w^\bullet|,|w^\circ|)}$,
 \item $\Zt(M\circ N)=\Zt(M)\circ\Zt(N)$,
 \item $\Zt(e)=(0,0)$,
 \item $\Zt(M\otimes N)=\Zt(M)\otimes\Zt(N)$.
\end{enumerate}
\end{theorem}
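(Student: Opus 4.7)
The plan is to verify the four conditions in order of increasing difficulty.

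Condition (3) is immediate: since $\D^{-1}(e)$ is the empty bottom-top tangle in $[-1,1]^3$, we have $Z(e)=\varnothing$ and $\TT_0=\varnothing$, whence $\Zt(e)=(\varnothing,\Id_{\SS_0})$, which is the unit $(0,0)$ of $\tsA$.

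Condition (4), tensor-preservation, follows by unraveling definitions. The isomorphism $\D^{-1}$ is strict monoidal, so $\D^{-1}(M\otimes N)$ is the juxtaposition of $\D^{-1}(M)$ and $\D^{-1}(N)$; the Kontsevich-LMO invariant is tensor-preserving on $\T_q\Cub$ because a surgery presentation of a juxtaposition is the disjoint union of the individual presentations; the map $\chi^{-1}$ commutes with disjoint union on disjoint color sets; and $\TT_{g+g'}=\TT_g\sqcup\TT_{g'}$ by construction. Tracking the shifts in labels and permutations prescribed by the definition of $\otimes_{\tsA}$ then shows that $\Zt(M\otimes N)=\Zt(M)\otimes\Zt(N)$.

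Condition (2) is the main content and is essentially delivered by Lemma~\ref{lem:key}. For $M\in\LCob_q(v,u)$ with $\D^{-1}(M)=(B,\gamma)$ carrying permutation $\sigma$ and $N\in\LCob_q(w,v)$ with $\D^{-1}(N)=(C,\omega)$ carrying permutation $\tau$, one computes
\begin{align*}
\Zt(M\circ N)
&= \left(\chi^{-1}Z(B\circ C),\sigma\tau\right)\circ_{\tsA}\left(\TT_{|w^\bullet|},\Id\right) \\
&= \left(\chi^{-1}Z(B),\sigma\right)\circ \TT_{|v^\bullet|}\circ\left(\chi^{-1}Z(C),\tau\right)\circ\left(\TT_{|w^\bullet|},\Id\right) \\
&= \left[\left(\chi^{-1}Z(B),\sigma\right)\circ\left(\TT_{|v^\bullet|},\Id\right)\right]\circ\left[\left(\chi^{-1}Z(C),\tau\right)\circ\left(\TT_{|w^\bullet|},\Id\right)\right] \\
&= \Zt(M)\circ\Zt(N),
\end{align*}
where the second equality is Lemma~\ref{lem:key} and the third is the associativity of $\circ_{\tsA}$, which can be verified directly from the compatibility of the maps $\chi_S^{-1}$, $\chi_{S,S'}$, and the pairings $\ang{-,-}_{\fc{g}^\ast}$.

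For condition (1), the identity $\Id_w$ corresponds under $\D^{-1}$ to the bottom-top tangle of Figure~\ref{fig:TvIdw}: namely $|w^\bullet|$ trivial $\cap$-shaped arcs together with $|w^\circ|$ vertical strands in $[-1,1]^3$, with trivial permutation. Since the surgery link is empty, $Z(\Id_w)=\ZK(\Id_w)$, and $\ZK$ splits as the Kontsevich invariant of the $\cap$-components disjoint union with $\downarrow^{|w^\circ|}$ on the verticals. After applying $\chi^{-1}$, the vertical strands contribute only empty diagrams on the $0$-colored labels, so composing with $\TT_{|w^\bullet|}$ reduces, on the bullet colors, to the computation carried out in the proof of \cite[Theorem~4.7]{CHM08}, which yields $\left[\sum_{i=1}^{|w^\bullet|}\strutgraph{i^+}{i^-}\right]$. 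Together with the trivial permutation $\Id_{\SS_{|w^\circ|}}$, this is exactly the identity of $(|w^\bullet|,|w^\circ|)$ in $\tsA$. The main obstacle lies in this last step: one must track the trivial $0$-colored contributions through the composition formula of Lemma~\ref{lem:Composition} and Corollary~\ref{cor:Composition} to confirm that no extraneous terms on the circle colors appear.
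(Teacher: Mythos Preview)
Your handling of conditions~(2), (3), and~(4) matches the paper's argument: these follow respectively from Lemma~\ref{lem:key}, from triviality of the empty tangle, and from the tensor-preservingness of $\ZK$ together with the multiplicativity of $\TT_g$.

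The difference lies in condition~(1). Your approach is a direct computation, attempting to reduce to the case handled in \cite{CHM08} by arguing that the $\circ$-strands contribute nothing. As you yourself flag, this last step is the obstacle --- and it is a real one: the Kontsevich invariant of $\D^{-1}(\Id_w)$ is not simply a tensor product of $\bullet$-parts and $\circ$-parts, since the letters of $w$ may be interleaved and the associators connecting the strands can a priori produce diagrams mixing $\pm$- and $0$-colors. Verifying that these all cancel after $\chi^{-1}$ and composition with $\TT_g$ would require substantial explicit calculation with the associator, and you have not carried it out.

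The paper bypasses this computation entirely. It first establishes~(2), and separately uses Lemma~\ref{lem:bottom-top tangle} and Corollary~\ref{cor:Composition} to show that $\Zt(\Id_w)$ is group-like with $s$-reduction $[I_g]$, hence of the form $\varnothing+(\ideg>0)$. If the $Y$-part were nonzero, write $\Zt(\Id_w)=\varnothing+x+(\ideg>k)$ with $\ideg x=k$ minimal. Applying~(2) to $\Id_w=\Id_w\circ\Id_w$ and expanding the composition in $\tsA$ via $\chi_{\fc{n}^0}^{-1}\chi_{\fc{n}^0,\fc{n}^{0'}}$ gives $\varnothing+2x+(\ideg>k)$, forcing $x=2x$, a contradiction. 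This idempotent argument works uniformly for any word~$w$ and requires no direct evaluation of Kontsevich integrals of the identity tangle. You would do well to replace your treatment of~(1) with this argument.
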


\begin{proof}
 (3) and (4) follows from these of $\ZK$ (see Section~\ref{subsec:DefKontsevichLMOinv}).
 (2) is due to Lemma~\ref{lem:key}.
 Let us prove (1).
 By the previous lemma, $\Zt(\Id_w)$ is group-like, thus it is written as $\varnothing+(\ideg>0)$.
 We assume that the higher-degree terms are not zero, and write
\[\Zt(\Id_w) = \varnothing +x +(\ideg>k) \quad(\deg x=k).\]
 It follows from (2) that $\Zt(\Id_w) = \Zt(\Id_w)\circ\Zt(\Id_w)$.
 By comparing both side of this equality,
\begin{align*}
 & \chi_{\fc{n}^0}^{-1} \chi_{\fc{n}^0,\fc{n}^{0'}} \ang{ \left( \Zt^Y(\Id_w) \middle/ i^{+}\mapsto i^\ast \right), \left( \Zt^Y(\Id_w) \middle/ \parbox{3.7em}{$i^{-}\mapsto i^\ast\\ i^0\mapsto i^{0'}$} \right) }_{\fc{g}^\ast} \\ 
 &= \chi_{\fc{n}^0}^{-1} \chi_{\fc{n}^0,\fc{n}^{0'}}\left( \varnothing +x +(x/i^0\mapsto i^{0'}) +(\ideg>k) \right) \\
 &= \varnothing +2x +(\ideg>k).
\end{align*}
 Thus we have $x=2x$, and it is a contradiction.
 Therefore, $\Zt^Y(\Id_w)=\varnothing$, and we conclude (4).
\end{proof}

\begin{remark}
 By definition, if $w \in \Mag(\bullet)$, then $\Zt$ coincides with the original LMO functor defined in \cite{CHM08}.
 Moreover, the diagram
 \[\xymatrix{\LCob_q \ar[r]^-{\K} \ar[d]_-{\Zt} & \LCob^\bullet_q \ar[d]^-{\text{original $\Zt$}} \\
 \tsA \ar[r]^-{(-/i^0\mapsto0)} & \tsA}\]
 commutes, where $\K$ is the functor defined in the same manner as $\K$ in Remark~\ref{rem:Kill}.
 In contrast, if $w \in \Mag(\circ)$, then we have
 \[\Zt(\D([-1,1]^3,-\sigma)) = \left(\chi_{\pi_0 \sigma}^{-1} \ZK(-\sigma), \Id_{\SS_n}\right)\]
 for any (not necessarily pure) string link $\sigma$ in $[-1,1]^3$ on $n$ strands.
\end{remark}

\begin{remark}\label{rem:Rational}
 By the same manner, one can define the category $\Q\LCob$ of \emph{rational Lagrangian $q$-cobordisms} and the functor $\Zt\colon \Q\LCob_q \to \tsA$, see \cite[Remark~2.8, 3.19, 4.14]{CHM08}.
 Indeed, the formal Gaussian integral is originally defined under rational condition and linking matrices in this paper are defined homologically.
\end{remark}

\section{Generators and the values on them}\label{sec:GenValue}
In \cite{CHM08}, it was proven that the category $\Cob^\bullet$, see Remark~\ref{rem:Kill}, is generated as a monoidal category by the morphisms $\mu$, $\eta$, $\Delta$, $\varepsilon$, $S^{\pm 1}$, $\psi_{\bullet,\bullet}^{\pm 1}$, $v_\pm$ and $Y$ listed in Table~\ref{tab:Gen}, and the values on them was calculated up to internal degree 2.
We shall add some morphisms and calculate the values on them.

\subsection{Generators of the category $\LCob$}\label{subsec:Gen}
We first define some bottom-top tangles as Table~\ref{tab:Gen}.
By Lemma~\ref{lem:Lagrangian}, the corresponding cobordisms are Lagrangian, and we use the same notation to represent the cobordisms.
The bottom-top tangles except $\psi_{\bullet,\circ}$, $\psi_{\circ,\bullet}$, $\psi_{\circ,\circ}$, $\tau$ and $\beta$ are same as in \cite{CHM08}.

\begin{table}
\centering
$ \begin{array}{cccc}
 \eta:=\raisegraph{-1em}{2.5em}{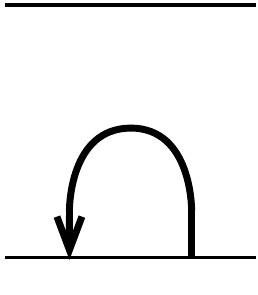} & \varepsilon:=\raisegraph{-0.8em}{2.5em}{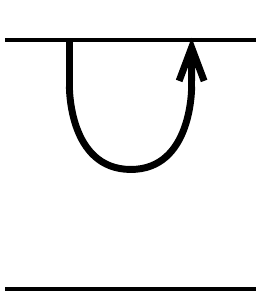} & v_{+}:=\raisegraph{-1em}{2.5em}{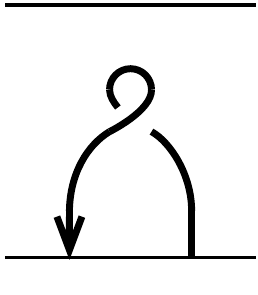} & v_{-}:=\raisegraph{-1em}{2.5em}{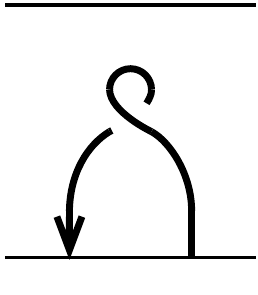} \\
 \mu:=\raisegraph{-2em}{5em}{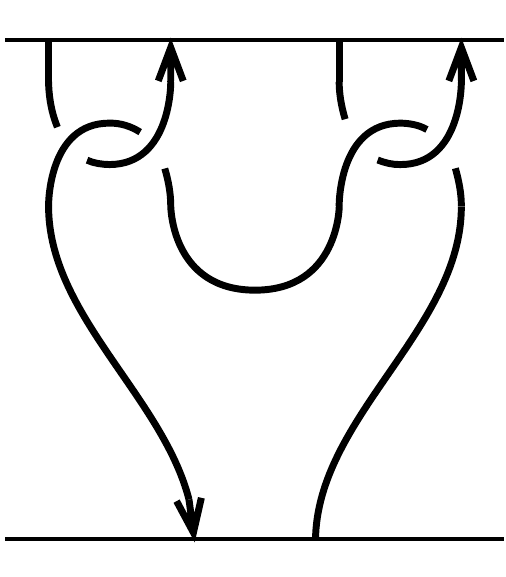} & \Delta:=\raisegraph{-2em}{5em}{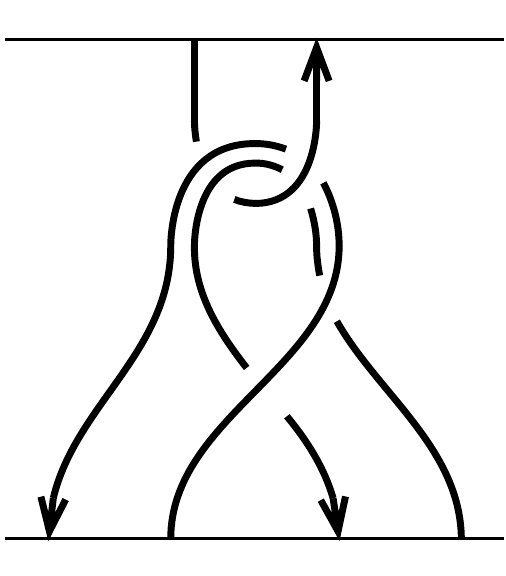} & \psi_{\bullet,\bullet}:=\raisegraph{-2em}{5em}{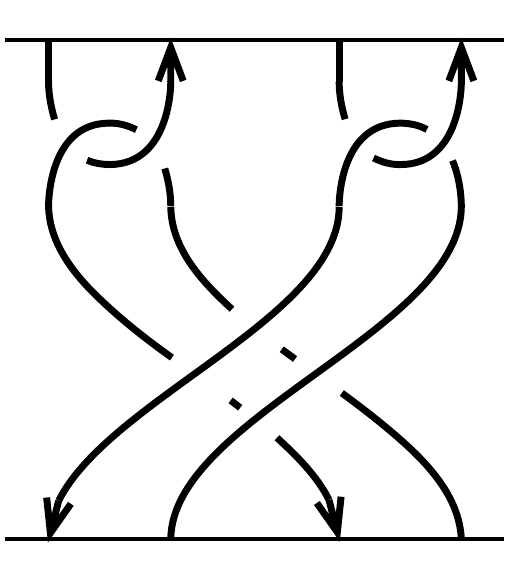} & \psi_{\circ,\circ}:=\raisegraph{-2em}{5em}{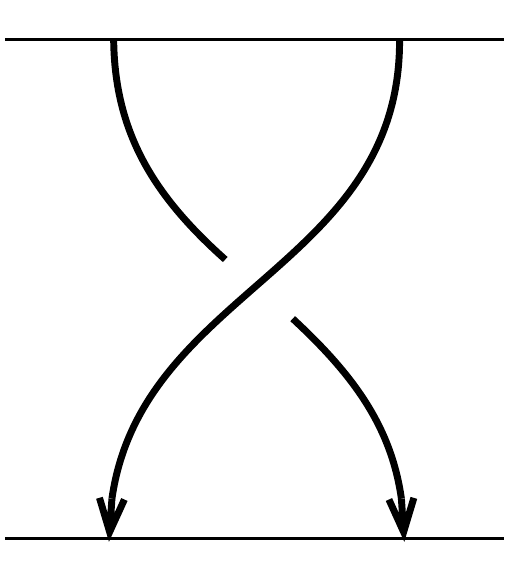} \\
 \tau:=\raisegraph{-2em}{5em}{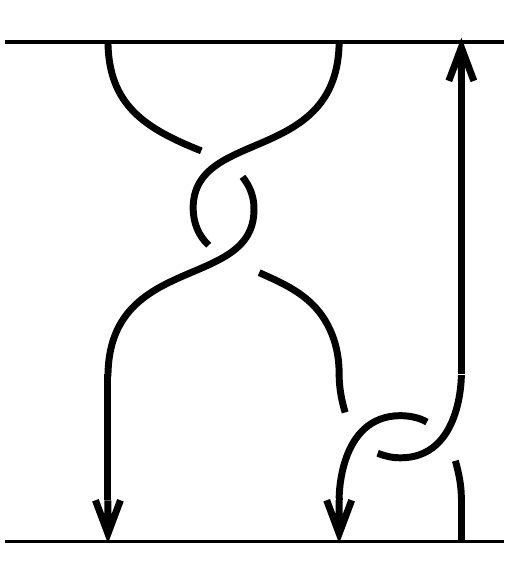} & \beta:=\raisegraph{-2em}{5em}{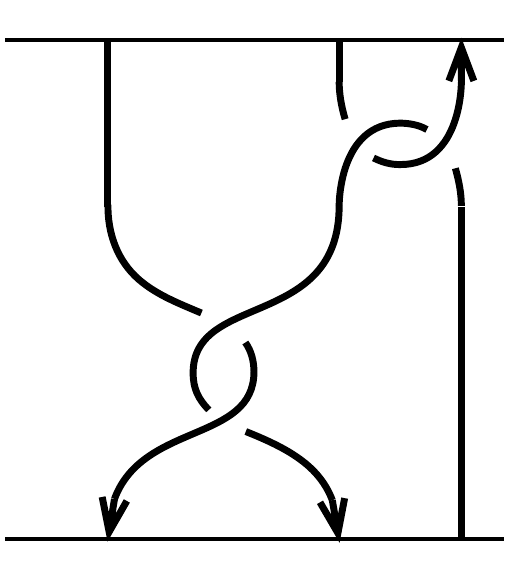} & \psi_{\bullet,\circ}:=\raisegraph{-2em}{5em}{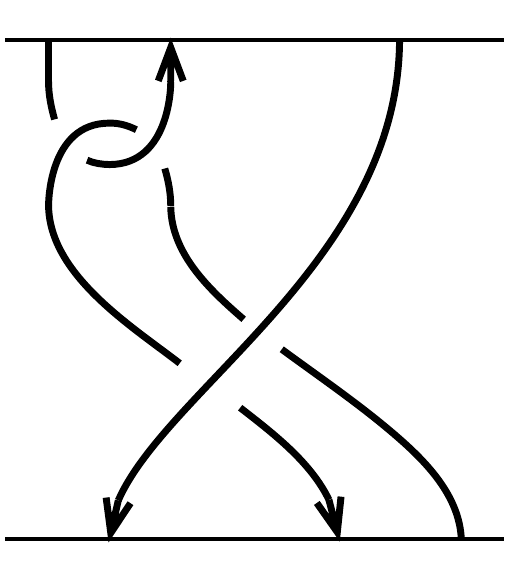} & \psi_{\circ,\bullet}:=\raisegraph{-2em}{5em}{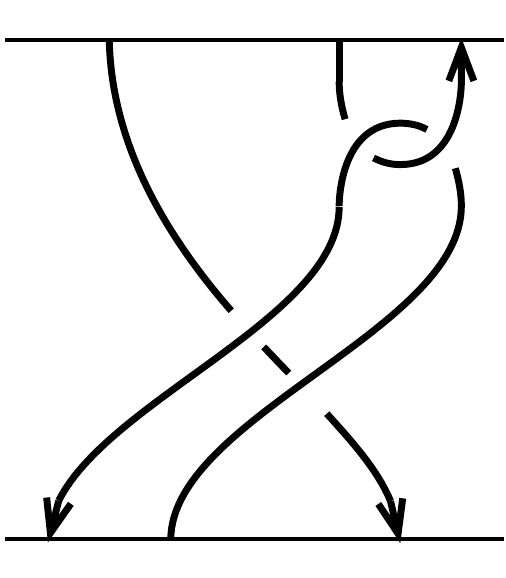} \\
 S:=\raisegraph{-2em}{5em}{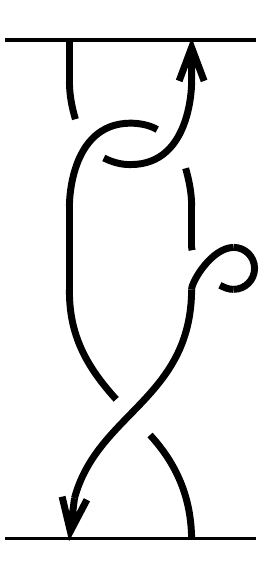} & \multicolumn{3}{l}{Y:=\raisegraph{-2em}{5em}{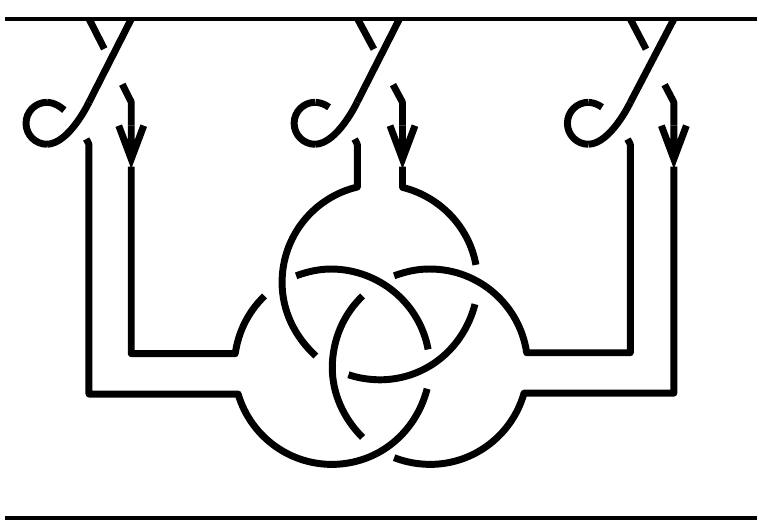}}
\end{array} $ \vspace{1em}
\caption{Some morphisms of the category $\LCob$}
\label{tab:Gen}
\end{table}%

\begin{remark}
 When we write $Y$ as a morphism of $\LCob_q$, we always regard it as a morphism from $(\bullet\bullet)\bullet$ to $e$.
 In general, when we regard a morphism of $\Cob$ as one of $\Cob_q$, we use the left-handed parenthesization $(\cdots((\ast\ast)\ast)\dots\ast)$ unless otherwise stated.
\end{remark}

\begin{remark}\label{rem:braided}
 Four kinds of $\psi$'s and their inverses induce braidings $\psi_{v,w}\colon v\otimes w \to w\otimes v$, which make the category $\LCob$ into a braided strict monoidal category.
\end{remark}

It is easy to see that the cobordisms
\begin{center}
 \raisegraph{-2em}{5em}{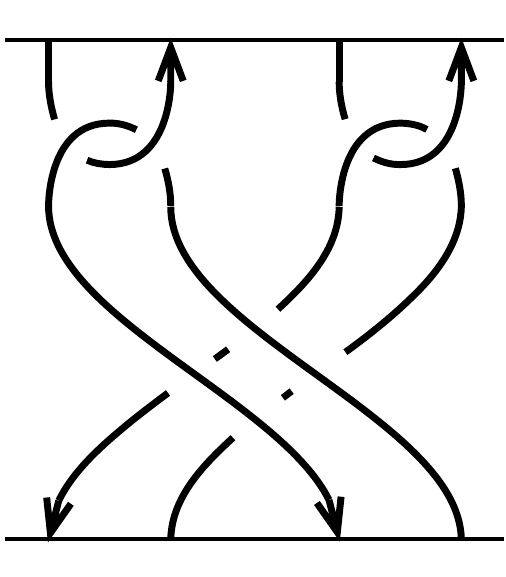}\ ,\quad
 \raisegraph{-2em}{5em}{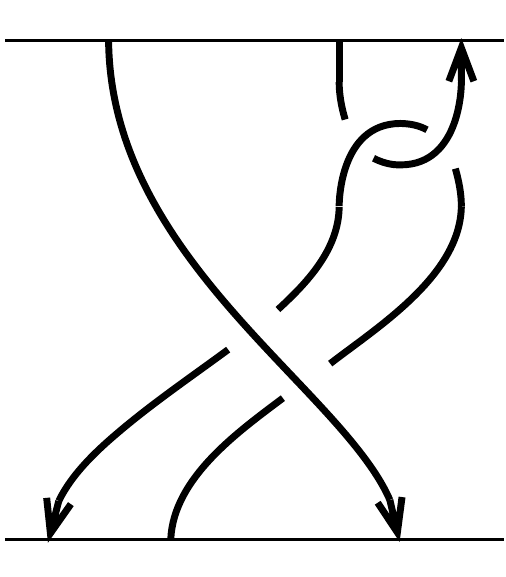}\ ,\quad
 \raisegraph{-2em}{5em}{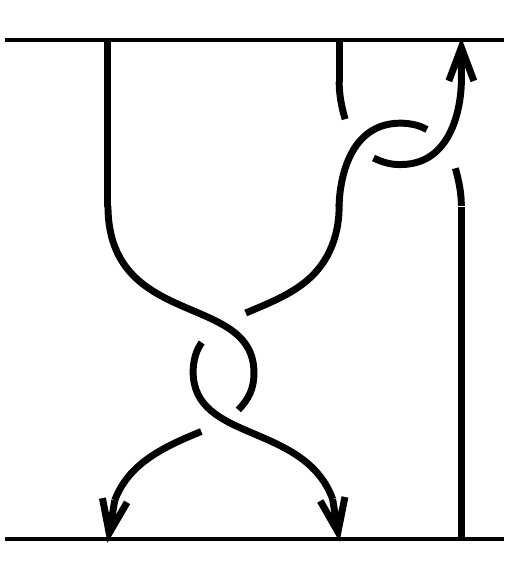} and
 \raisegraph{-2em}{5em}{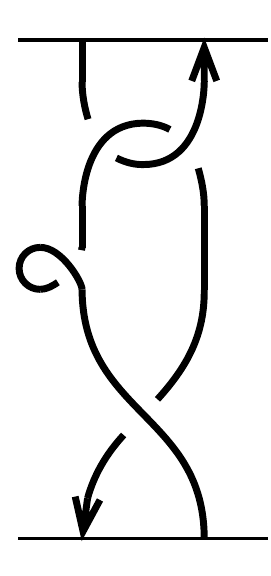}
\end{center}
are the (two-sided) inverses of $\psi_{\bullet,\bullet}$, $\psi_{\bullet,\circ}$, $\beta$ and $S$ in the category $\LCob_q$.
In the same manner, one finds the inverses of $\psi_{\circ,\circ}$, $\psi_{\circ,\bullet}$ and $\tau$.
(By Proposition~\ref{prop:InvertElem}, it suffices to check that they are left or right inverses.)

\begin{proposition}\label{prop:Gen}
 The category $\LCob$ is generated as monoidal category by the morphisms $\mu$, $\eta$, $\Delta$, $\varepsilon$, $S^{\pm1}$, $\psi_{\bullet,\bullet}^{\pm 1}$, $v_\pm$, $Y$, $\psi_{\bullet,\circ}^{\pm1}$, $\psi_{\circ,\bullet}^{\pm1}$, $\psi_{\circ,\circ}^{\pm1}$ and $\tau$.
 Moreover, $\LCob_q$ is generated by the morphism
 \[P_{u,v,w}:=\raisegraph{-2em}{5em}{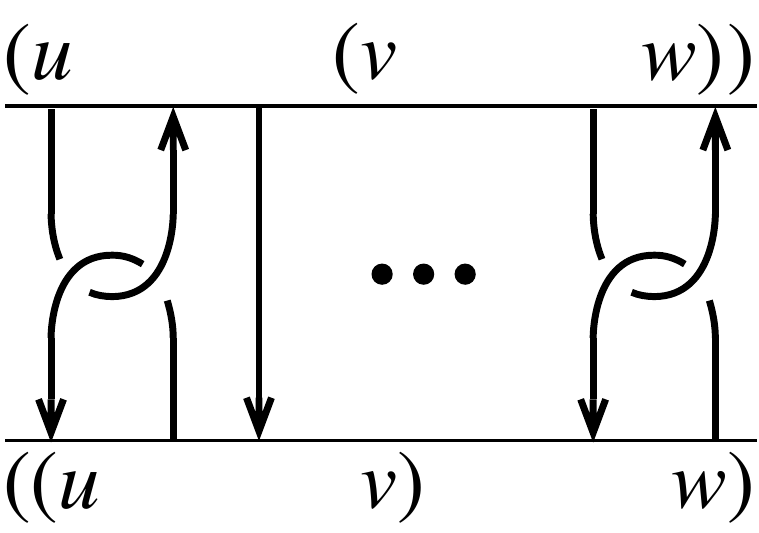}\]
 for $u,v,w \in \Mag(\bullet,\circ)$ and their inverses together with the above morphisms.
\end{proposition}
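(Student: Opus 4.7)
The plan is to reduce to the generation theorem for $\LCob^\bullet$ proved in \cite{CHM08}, using the additional generators to account for the boundary circles indexed by $\circ$. Throughout, I would pass to the bottom-top tangle side via $\D^{-1}\colon\LCob\to\btT$; by Lemma~\ref{lem:Lagrangian}, a Lagrangian cobordism $M$ corresponds to a bottom-top tangle $(B,\gamma)$ with $B$ a homology cube and $\Lk_B(\gamma^+)=O$. The vertical components $\gamma^0$ encode exactly the data carried by the $\circ$-letters of $w_\pm$, while $\gamma^\pm$ are the parts already treated in \cite{CHM08}.

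The first main step is to isolate the vertical strands. Choose a surgery presentation $(L,\gamma')$ of $(B,\gamma)$ in $[-1,1]^3$ and take a generic height function, which slices $L\cup\gamma'$ into elementary tangle pieces. A framed-braid calculation on the $\circ$-strands shows that every elementary piece involving a vertical strand, be it a crossing with another strand of either flavor or a framing change, is realized by $\psi_{\circ,\circ}^{\pm1}$, $\psi_{\bullet,\circ}^{\pm1}$, $\psi_{\circ,\bullet}^{\pm1}$, or $\tau$. In particular, the auxiliary morphism $\beta$ listed in Table~\ref{tab:Gen} should be written explicitly as a product of these, confirming that $\beta$ need not be added to the generating set. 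After collecting these pieces, one reduces to the case where $\gamma^0$ consists of $n$ parallel trivial vertical strands that are unlinked from the rest of the tangle.

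At that point, the $\circ$-strands contribute only the identity factor on the $\circ$-part, and the residual data is a Lagrangian cobordism in $\LCob^\bullet$, namely $\K(M)$ with $\K$ as in Remark~\ref{rem:Kill}. Applying the generation theorem of \cite{CHM08} expresses this as a composition of $\mu,\eta,\Delta,\varepsilon,S^{\pm1},\psi_{\bullet,\bullet}^{\pm1},v_\pm,Y$; tensoring with trivial $\circ$-strands at each intermediate stage, and commuting the $\bullet$-generators past them via $\psi_{\bullet,\circ}^{\pm1}$ and $\psi_{\circ,\bullet}^{\pm1}$ as needed, yields the desired decomposition in $\LCob$. For the $\LCob_q$ statement, any change of parenthesization at an intermediate object can be realized by a composition of $P_{u,v,w}^{\pm1}$, and Mac~Lane's coherence theorem guarantees that these associators suffice to interpolate between any two bracketings compatible with the generators-side and target-side monoid words.

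The main obstacle is the framed-braid reduction in the first step: one must show that an arbitrary embedding of the vertical strands, possibly linked with the top and bottom cups/caps of $\gamma^\pm$ and with the surgery components of $L$, can be brought to standard vertical form using only the listed generators, while preserving the homology-cube and Lagrangian conditions at every intermediate stage. This amounts to combining a generation statement for the framed braid group on $n$ strands with a careful tracking of the Lagrangian data through each elementary move, analogous in spirit to the handle-slide arguments in \cite{CHM08} but adapted to the extended boundary structure.
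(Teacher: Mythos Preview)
Your plan has a genuine gap in the ``framed-braid reduction'' step. You assert that slicing a surgery presentation by a generic height function decomposes the contribution of each vertical strand into crossings and framing changes realized by $\psi_{\circ,\circ}^{\pm1}$, $\psi_{\bullet,\circ}^{\pm1}$, $\psi_{\circ,\bullet}^{\pm1}$, and $\tau$. But a vertical strand $\gamma^0_i$ can have local maxima and minima with respect to the height function---for instance when the strand is knotted, as in $\C(S^3,K)\in\LCob(\circ,\circ)$ for a non-trivial knot $K$---and none of the listed generators creates or removes a turning point on a $\circ$-strand; indeed every morphism of $\LCob$ preserves $|w^\circ|$, so a $\circ$-strand can never close up into a cup or cap. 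Crossings of a $\circ$-strand with a component of the surgery link $L$ are likewise not accounted for: those components are closed circles, not top or bottom components of a bottom-top tangle, and do not correspond to any $\psi_{\bullet,\circ}$ or $\psi_{\circ,\bullet}$. You flag the second point as an obstacle but do not resolve it, and the first you do not mention. Consequently you cannot in general reduce to trivial vertical strands, and the residual data is not $\K(M)$: in the example above, $\K(\C(S^3,K))$ is the standard cube and retains no information about $K$.

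The paper avoids this entirely by a different mechanism. After using braidings to reduce to $M\in\LCob(\bullet^{\otimes g}\circ^{\otimes n},\bullet^{\otimes f}\circ^{\otimes n})$ with $\sigma=\Id$, it rewrites the bottom-top tangle $(B,\gamma)$ as a composition in which one factor is a fixed piece $\alpha_n$ (essentially Habiro's preferred bijection $\tau_n$ from \cite{Hab06}) and the remaining factor is a Lagrangian bottom-top tangle with only $\bullet$-components, hence a morphism of $\LCob^\bullet$ covered by \cite{CHM08}. All the complexity---knotting of strands, linking with surgery components, the homology cube $B$---is absorbed into that $\LCob^\bullet$ factor rather than being stripped from the $\circ$-strands. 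The only new work is the explicit formula $\alpha_1=(\varepsilon\otimes\Id_\circ)\circ\psi_{\bullet,\circ}^{-1}\circ\tau\circ\psi_{\circ,\bullet}^{-1}$ together with $\alpha_n=(\text{braidings})\circ\alpha_1^{\otimes n}$.
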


\begin{proof}
 The latter follows from the former.
 We prove that every $M \in \LCob(w,v)$ belongs to the category $\CC$ generated by the above morphisms.
 Using the braidings, it suffices to consider the case of $M=(M,\Id_{\SS_n},m) \in \LCob(\bullet^{\otimes g}\circ^{\otimes n},\bullet^{\otimes f}\circ^{\otimes n})$.
 Namely, the bottom-top tangle $(B,\gamma):=\D^{-1}(M)$ is represented as follows: 
\begin{center}
 \includegraphics[width=0.99\textwidth]{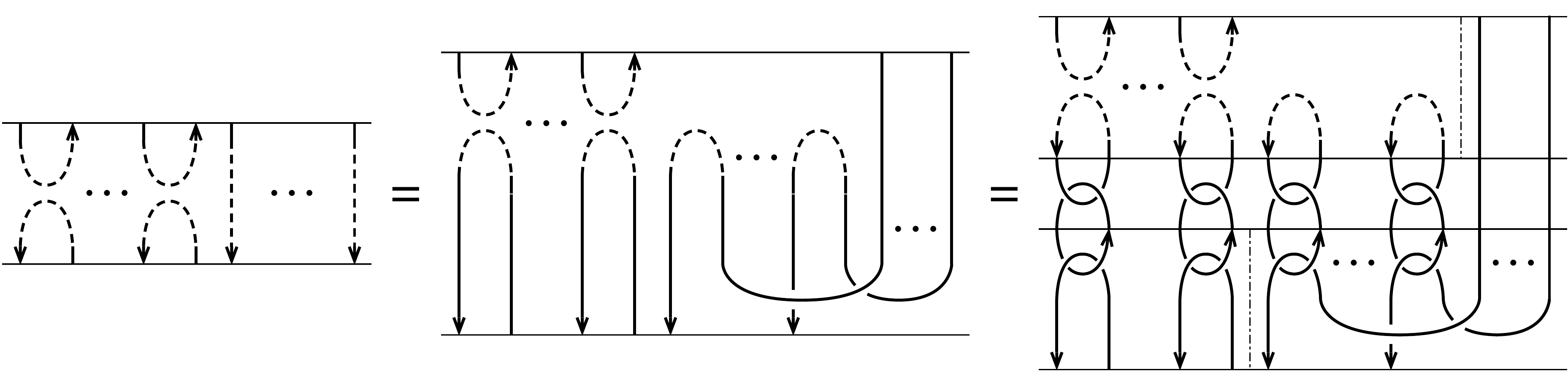}.
\end{center}
 (This correspondence is essentially same as the preferred bijection $\tau_n$ in \cite[Section~13]{Hab06}.)
 Since $\Lk_B(\gamma^+)=O$, it suffice to show that the bottom-top tangle $\alpha_n$ at the lower right in the right-hand side of the above equality belongs to $\CC$.
 Here, one can show the following equalities
 \[\alpha_1 = (\varepsilon \otimes \Id_\circ) \circ \psi_{\bullet,\circ}^{-1} \circ \tau \circ \psi_{\circ,\bullet}^{-1},\quad
 \alpha_n = (\text{braidings})\circ\alpha_1^{\otimes n}.\]
 Therefore, the proof is completed.
\end{proof}

\begin{example}
 One can check the following decomposition
 \[\raisegraph{-2.2em}{5em}{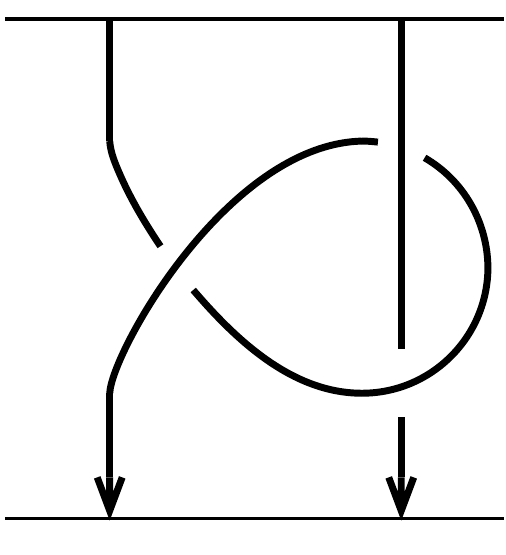} =
((\varepsilon\otimes\Id_\circ)\check{\tau}\otimes\Id_\circ)(\Id_\bullet\otimes\psi_{\circ,\circ}^2)(\psi_{\bullet,\circ}^{-1}\otimes\Id_\circ)(\tau\otimes\Id_\circ)(\Id_\circ \otimes v_{-}\otimes\Id_\circ),\]
 where the composition $\circ$ is omitted and the morphism $\check{\tau}$ is defined by
 \[\check{\tau} := \raisegraph{-2em}{5em}{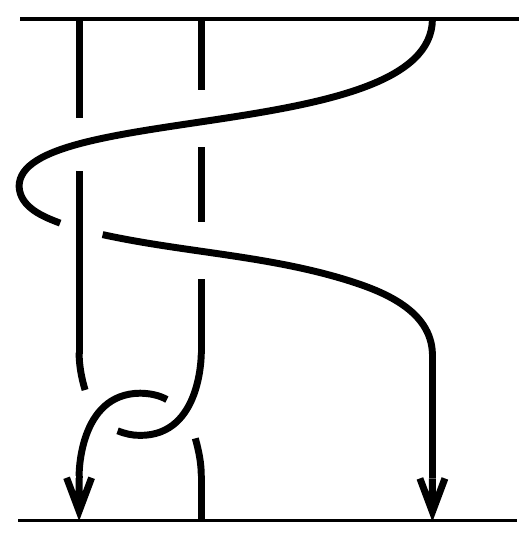} = \psi_{\bullet,\circ}^{-1} \circ \tau \circ \psi_{\bullet,\circ}\ .\]
\end{example}

\subsection{The values on the generators}\label{subsec:Value}
From now on, we assume an associator $\Phi \in \A(\downarrow\downarrow\downarrow)$ is the one derived from a rational even Drinfel'd series $\varphi(A,B) \in \Q\angg{A,B}$, which was introduced in \cite[Section~3]{LeMu97}.
By the pentagon and hexagon relations and the evenness, $\varphi(A,B)$ must be of the form
\[1 +\frac{1}{24}[A,B] +(\deg>3).\] 
Thus, under the above assumption, we have
\[\Phi = \varphi\left(\raisegraph{-1em}{2.5em}{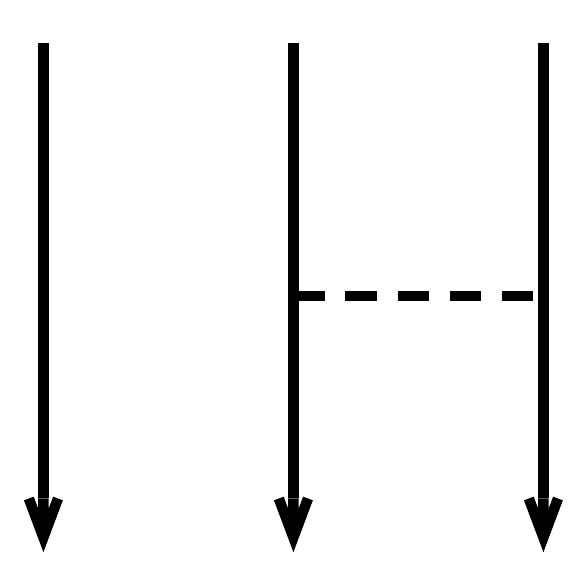},\raisegraph{-1em}{2.5em}{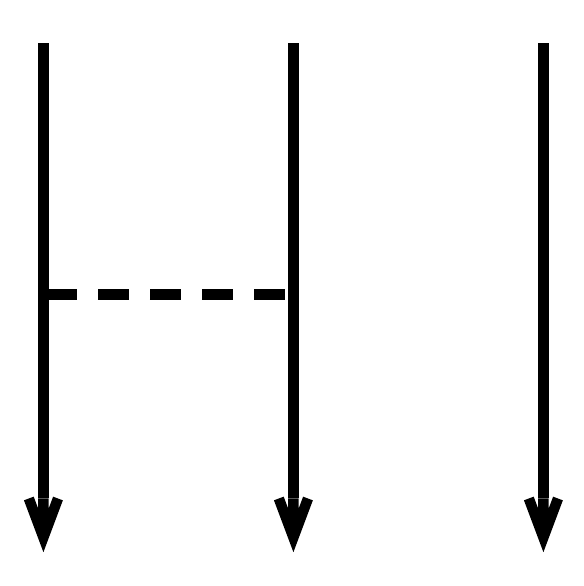}\right) = \raisegraph{-1em}{2.5em}{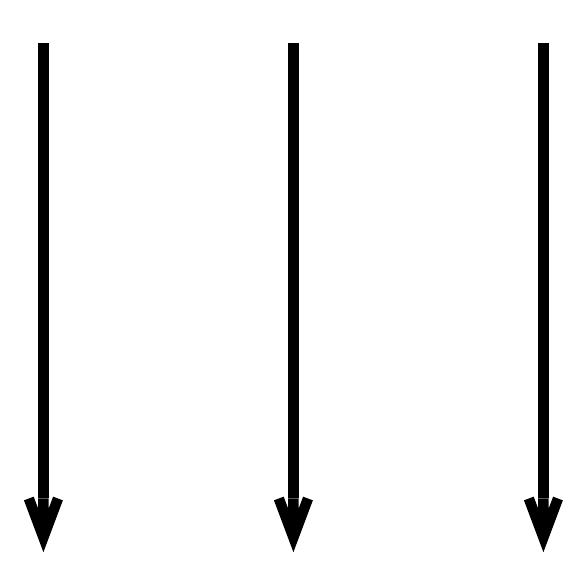} +\frac{1}{24}\raisegraph{-1em}{2.5em}{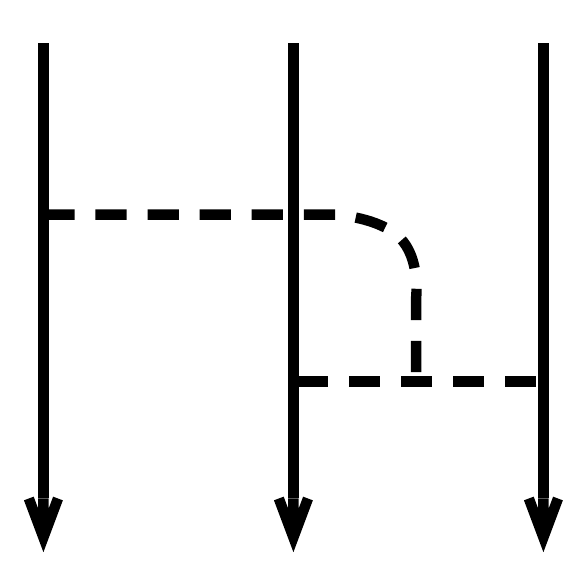} +(\deg>3).\]
We can calculate the values on $\psi_{\bullet,\circ}^{\pm1}$, $\psi_{\circ,\bullet}^{\pm1}$, $\psi_{\circ,\circ}^{\pm1}$ and $\beta$ using \cite[Lemma~5.7]{CHM08}, and the results are written in Table~\ref{tab:Value}.

\begin{table}
\centering
$ \begin{array}{c|c|c}
 M & \log_\sqcup\Zt^s(M) & \log_\sqcup\Zt^Y(M) \bmod (\ideg>2) \\ \hline
 \mu & \strutgraph{1^+}{1^-} +\strutgraph{2^+}{1^-} & -\frac{1}{2}\Ygraph{1^+}{2^+}{1^-} +\frac{1}{12}\yengraph{1^+}{1^+}{2^+}{1^-} +\frac{1}{12}\myengraph{1^+}{2^+}{2^+}{1^-} \rule[-1.5em]{0em}{3.7em} \\ \hline
 \eta & 0 & 0 \rule[-0.7em]{0em}{2em} \\ \hline
 \Delta & \strutgraph{1^+}{1^-} +\strutgraph{1^+}{1^-} & \frac{1}{2}\dYgraph{1^+}{1^-}{2^-} +\frac{1}{12}\mlambdagraph{1^+}{1^-}{2^-}{2^-} +\frac{1}{12}\lambdagraph{1^+}{1^-}{1^-}{2^-} - \frac{1}{4}\Hgraph{1^+}{1^+}{1^-}{2^-} \rule[-1.5em]{0em}{3.7em} \\ \hline
 \varepsilon & 0 & 0 \rule[-0.7em]{0em}{2em} \\ \hline
 S^{\pm1} & -\strutgraph{1^+}{1^-} & \mp\frac{1}{4} \Phigraph{1^+}{1^-} \mp\frac{1}{4}\Hgraph{1^+}{1^+}{1^-}{1^-} \rule[-1.5em]{0em}{3.7em} \\ \hline
 \psi_{\bullet,\bullet}^{\pm1} & \strutgraph{1^+}{2^-} +\strutgraph{2^+}{1^-} & \mp\frac{1}{2}\Hgraph{1^+}{2^+}{2^-}{1^-} \rule[-1.2em]{0em}{3em} \\ \hline
 v_\pm & \mp\frac{1}{2}\dstrutgraph{1^-}{1^-} & \frac{1}{48} \dPhigraph{1^-}{1^-} \rule[-1em]{0em}{3em} \\ \hline
 Y & 0 & -\uYgraph{1^+}{2^+}{3^+} +\frac{1}{2}\uHgraph{1^+}{1^+}{2^+}{3^+} +\frac{1}{2}\uHgraph{2^+}{2^+}{3^+}{1^+} +\frac{1}{2}\uHgraph{3^+}{3^+}{1^+}{2^+} \rule[-1.5em]{0em}{3.7em} \\ \hline
 P_{u,v,w} & \sum\limits_{i=1}^{|u^\bullet|+|v^\bullet|+|w^\bullet|} \!\strutgraph{i^+}{i^-} & 0 \rule[-1.1em]{0em}{3em} \\ \hline
 \psi_{\bullet,\circ}^{\pm1} & \strutgraph{1^+}{1^-} & \pm\frac{1}{2}\rTgraph{1^+}{1^-}{1^0} +\frac{1}{8}\rPigraph{1^+}{1^-}{1^0}{1^0} \rule[-1.5em]{0em}{3.7em} \\ \hline
 \psi_{\circ,\bullet}^{\pm1} & \strutgraph{1^+}{1^-} & \pm\frac{1}{2}\rTgraph{1^+}{1^-}{1^0} +\frac{1}{8}\rPigraph{1^+}{1^-}{1^0}{1^0} \rule[-1.5em]{0em}{3.7em} \\ \hline
 \psi_{\circ,\circ}^{\pm1} & \pm\frac{1}{2}\dstrutgraph{1^0}{2^0} & -\frac{1}{32}\lrPhigraph{1^0}{2^0} \pm\frac{1}{24}\lrHgraph{1^0}{2^0}{1^0}{2^0} \rule[-1.2em]{0em}{3em}\\ \hline
 \tau & \strutgraph{1^+}{1^-} +\lustrutgraph{1^+}{1^0} & -\frac{1}{2}\lTgraph{1^+}{1^-}{1^0} +\frac{1}{4}\uchairgraph{1^+}{1^+}{1^-}{1^0} +\frac{1}{12}\chairgraph{1^+}{1^-}{1^-}{1^0} +\frac{1}{12}\lPigraph{1^+}{1^-}{1^0}{1^0} -\frac{1}{4}\luPhigraph{1^+}{1^0} +\frac{1}{4}\luHgraph{1^+}{1^+}{1^0}{1^0} \rule[-1.5em]{0em}{3.7em}\\ \hline
 \tau^{-1} & \strutgraph{1^+}{1^-} -\lustrutgraph{1^+}{1^0} & \frac{1}{2}\lTgraph{1^+}{1^-}{1^0} -\frac{1}{4}\uchairgraph{1^+}{1^+}{1^-}{1^0} -\frac{1}{12}\chairgraph{1^+}{1^-}{1^-}{1^0} +\frac{1}{12}\lPigraph{1^+}{1^-}{1^0}{1^0} \rule[-1.5em]{0em}{3.7em}\\ \hline
 \beta^{\pm1} & \strutgraph{1^+}{1^-} \pm\ldstrutgraph{1^-}{1^0} & \mp\frac{1}{2}\lTgraph{1^+}{1^-}{1^0} -\frac{1}{8}\ldPhigraph{1^-}{1^0} \pm\frac{1}{12}\uchairgraph{1^+}{1^+}{1^-}{1^0} -\frac{1}{12}\lPigraph{1^+}{1^-}{1^0}{1^0} \pm\frac{1}{8}\ldHgraph{1^-}{1^-}{1^0}{1^0} \rule[-1.5em]{0em}{3.7em}\\
\end{array} $ \vspace{1em}
\caption{The values on the generators, $\tau^{-1}$ and $\beta^{\pm1}$.}
\label{tab:Value}
\end{table}%

On the other hand, to calculate the value on $\tau^{\pm1}$, we need Lemma~\ref{lem:updown} that is an upside-down version of \cite[Lemma~5.5]{CHM08}.
One can prove Lemma~\ref{lem:updown} in a similar way.

\begin{lemma}\label{lem:updown}
 Let $M \in \LCob_q(w,v)$.
 Suppose $\D^{-1}(M)$ is the composition of the $q$-tangle
 \[U:=\raisegraph{-4em}{8em}{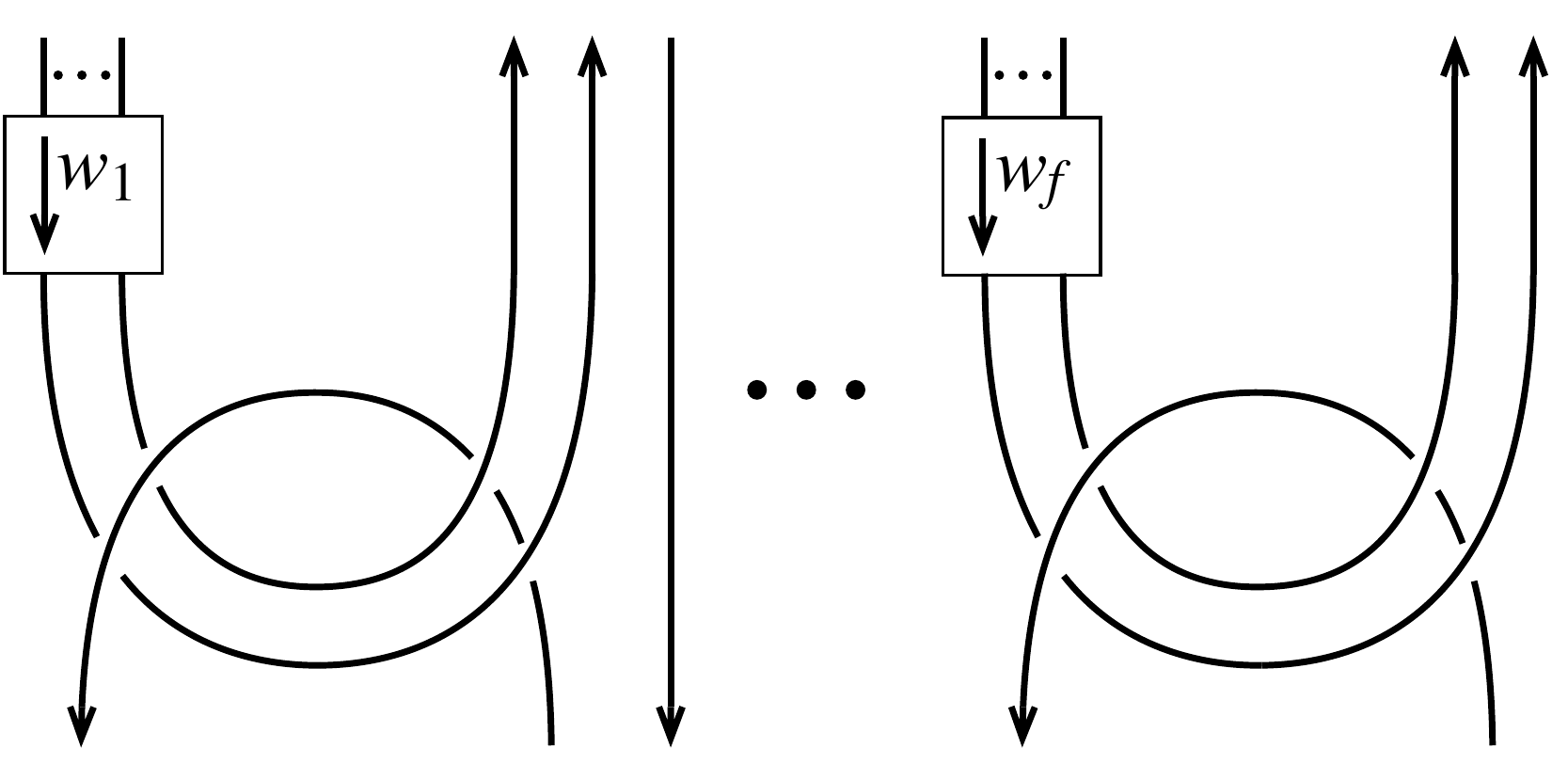}\]
 and some $q$-tangle $L$ in $[-1,1]^3$.
 Then $\TT_f\circ\Zt(M)\circ\TT_g^{-1} \in \tsA(g,f,n)$ is equal to the image by $\chi_{\pi_0(U^{+}\cup L)}^{-1}$ of the composition of the series of Jacobi diagrams
 \[\raisegraph{-5em}{10em}{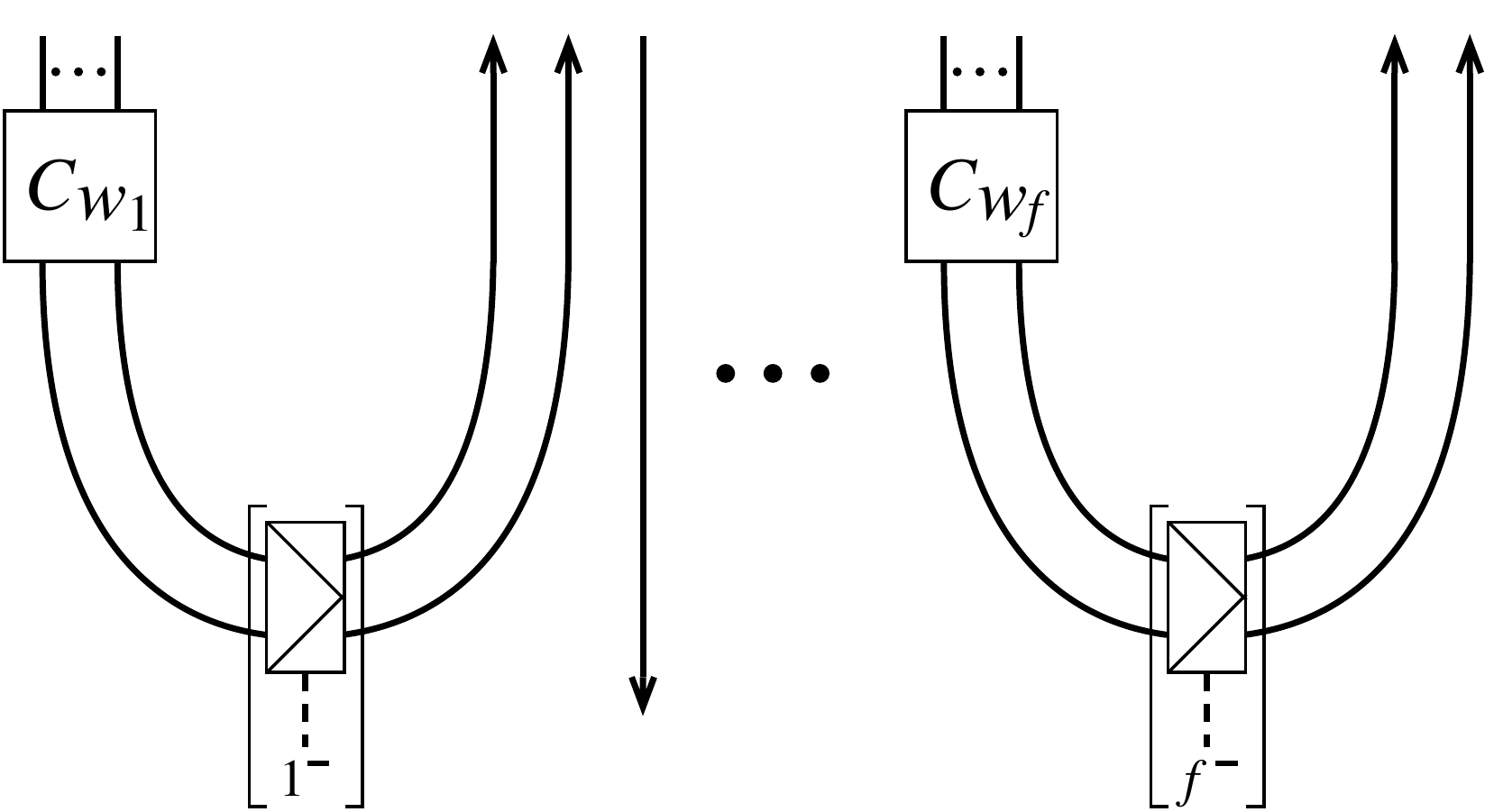}\]
 and $\ZK(L) \in \A(L,\emptyset)$ in the category $\A$.
 Here, $C_{w_i}$ and a directed rectangle is same as in \cite[Lemma~5.5]{CHM08}.
\end{lemma}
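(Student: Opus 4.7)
The plan is to mirror the strategy of Lemma~5.5 in \cite{CHM08}, paying attention to the extra vertical ($\circ$-type) strands that our extended setup allows. The starting point is to unfold the definition: by Theorem~\ref{thm:Zt},
$\Zt(M) = (\chi_{\pi_0 \gamma}^{-1} Z(B,\gamma),\sigma) \circ_{\tsA} \TT_g$
where $(B,\gamma) = \D^{-1}(M)$. Conjugating by $\TT_f$ on the left and $\TT_g^{-1}$ on the right and cancelling $\TT_g \circ \TT_g^{-1}$, the problem reduces to identifying $\TT_f \circ_{\tsA} (\chi_{\pi_0 \gamma}^{-1} Z(B,\gamma),\sigma)$ with the claimed expression.

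Using the hypothesis $(B,\gamma) = U \circ L$ with $L$ a $q$-tangle in $[-1,1]^3$, I would choose a surgery presentation supported on the $U$ part (since $L$ contributes no framed surgery link), so that $Z(B,\gamma)$ splits as a $\chi$-image of an expression in which $\ZK(L)$ appears as an unperturbed factor, composed in $\A$ with the Kontsevich integral of the top portion $U$. I would then apply Lemma~\ref{lem:key} (or rather, the proof technique used there) to pair $\TT_f$ with the top components $U^+$: each copy of $\TT(i^+,i^-)$ contains the $\lambda$-diagrams whose $r$-colored vertices glue to the $U^+$-strands, and this gluing is precisely what the directed rectangles in the stated formula encode, with $C_{w_i}$ recording the parenthesization data near each top handle (exactly as in CHM08).

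The new ingredient, and the anticipated main obstacle, is the additional $\chi_{\fc{n}^0}^{-1}\chi_{\fc{n}^0,\fc{n}^{0'}}$ map that appears in the $\tsA$-composition compared to the $\Cob^\bullet$ setting of CHM08. Since $\TT_f$ has no $\fc{n}^0$-colored vertices, the $\chi_{\fc{n}^0,\fc{n}^{0'}}$ step acts only on the $\Zt(M)$ side of the pairing; using Lemma~\ref{lem:filter} together with the identification of $\pi_0 \gamma^0$ with $\pi_0 L^0$, one checks that the net effect of these $\chi$'s is absorbed into the outer $\chi_{\pi_0(U^+ \cup L)}^{-1}$. With this absorption verified, the rest of the argument follows the CHM08 proof line by line: the integration over the surgery link is repackaged via the formal Gaussian integral, and the explicit form of $\TT_f$ rearranges into the stated composition in $\A$.

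I expect the bookkeeping of labels (distinguishing top components of $U$, vertical components, bottom components, and the surgery link) to be the most delicate part of the verification, while the conceptual content is entirely parallel to the original one-handle-only case.
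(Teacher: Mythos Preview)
Your approach is correct and matches the paper's own treatment, which simply declares the proof analogous to \cite[Lemma~5.5]{CHM08}. One small simplification: you do not need Lemma~\ref{lem:filter} to handle the extra $\chi_{\fc{n}^0}^{-1}\chi_{\fc{n}^0,\fc{n}^{0'}}$. Since $\TT_f \in \A(\fc{f}^+\cup\fc{f}^-)$ carries no $\fc{n}^0$-colored vertices whatsoever, after the pairing $\ang{-,-}_{\fc{f}^\ast}$ every $0'$-colored vertex comes from the $\chi_{\pi_0\gamma}^{-1}Z(B,\gamma)$ factor and there are no $0$-colored vertices left; hence $\chi_{\fc{n}^0,\fc{n}^{0'}}$ degenerates to $\chi_{\fc{n}^{0'}}$ (followed by the relabelling $0'\mapsto 0$) and is cancelled \emph{exactly} by the outer $\chi_{\fc{n}^0}^{-1}$, not merely up to higher i-filter. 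With that observation the bookkeeping reduces to the one-handle case verbatim.
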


\begin{remark}
 The lower degree terms of $\TT_1$ and $\TT_1^{-1}$ are given in \cite[Lemma~5.7]{CHM08} and its proof.
\end{remark}

\section{Universality among finite-type invariants}\label{sec:Universal}
Le~\cite{Le97} proved that the LMO invariant is universal among rational-valued finite-type invariants of rational homology spheres (see \cite[p.~89, Remark~1]{Le97}).
As mentioned before, Cheptea, Habiro and Massuyeau showed that the LMO functor is universal among rational-valued finite-type invariants of 3-manifolds.
We prove that the extension $\Zt$ has the same property.

\subsection{Clasper calculus}\label{subsec:Clasper}
In this subsection, we prepare terminology of clasper calculus based on \cite{HabK00}.
Consider pairs $(M,\gamma)$ where $M$ is a compact oriented 3-manifold and $\gamma$ is a framed oriented tangle in $M$.
If $M$ has a boundary, we take into account a parametrization of $\partial M$.
Moreover, if $\gamma$ has a boundary, it must be attached to assigned points in $\partial M$.

\begin{definition}
 A \emph{graph clasper} is an oriented compact surface with a certain decomposition into three kinds of pieces: disks, bands and annuli, which are called \emph{nodes}, \emph{edges} and \emph{leaves} respectively.
 Each leaf should be connected to a node and no leaf by a band.
 Each node should be connected to nodes or leaves by exactly three bands.
\end{definition}

The surface on the left of Figure~\ref{fig:Ygraph} is one of the simplest example of a graph clasper.
For simplicity, it is represented as the graph on the right of Figure~\ref{fig:Ygraph}, that is, leaves, nodes and bands of a graph clasper $G$ are replaced with circles, points and arcs respectively, here we should remember information required for recovering $G$.

\begin{figure}[h]
 \centering
 \includegraphics[width=0.7\columnwidth]{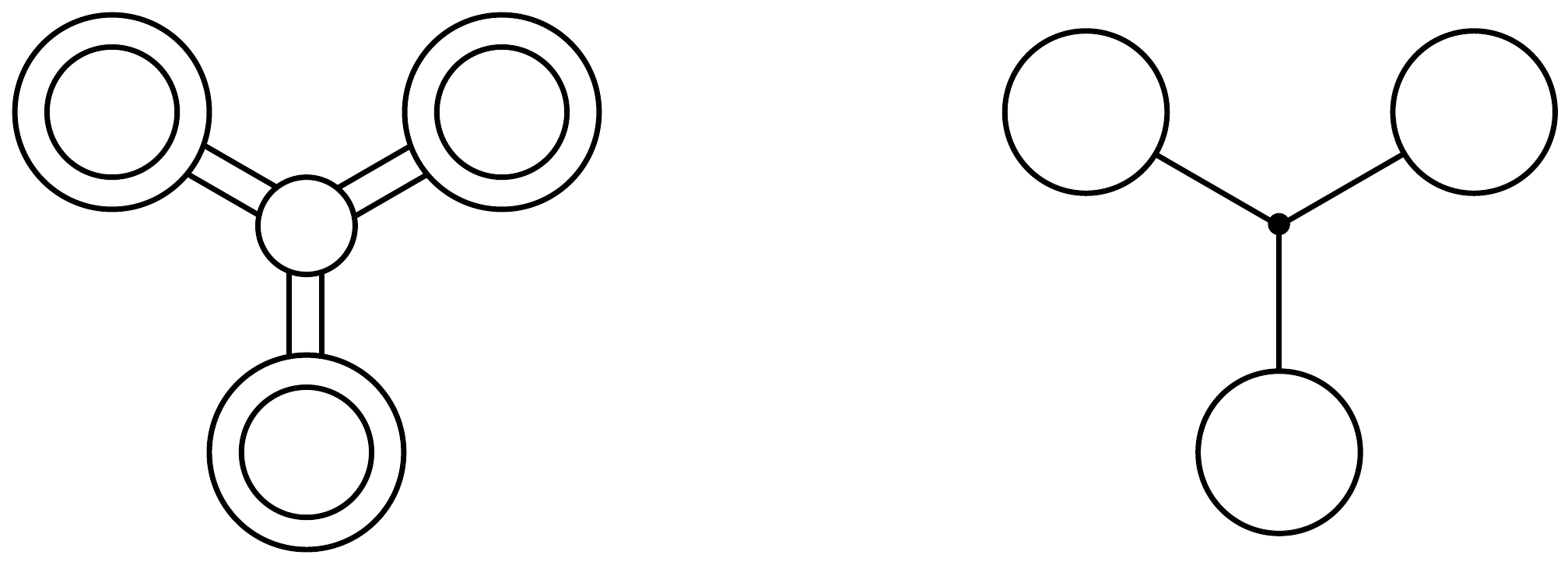}
 \caption{An example of a graph clasper and its alternative expression}
 \label{fig:Ygraph}
\end{figure}%

Next, consider surgery along a graph clasper $G$ in a 3-manifold $M$.
Let $Y(G)$ denotes the graph clasper obtained from $G$ by applying the \emph{fission rule} illustrated in Figure~\ref{fig:FissionRule} to each edge connecting two nodes.
The resulting graph is the disjoint union of $\ideg G$ copies of $Y$-graphs, where $\ideg G$ is the \emph{internal degree} of $G$, that is, the number of nodes of $G$.

\begin{figure}[h]
 \centering
 \includegraphics[width=0.6\columnwidth]{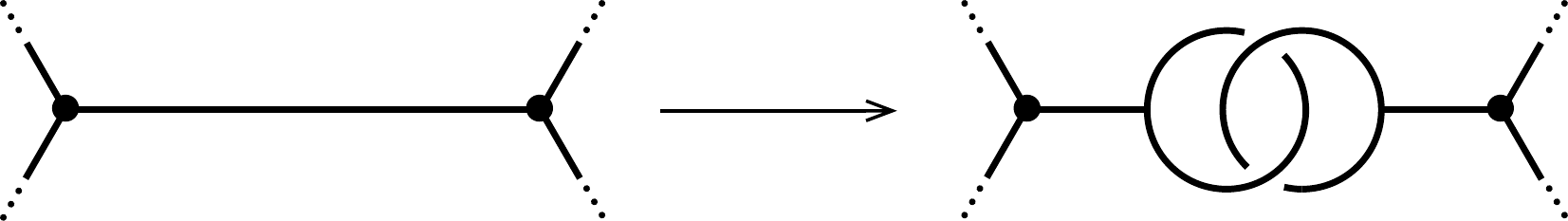}
 \caption{The fission rule}
 \label{fig:FissionRule}
\end{figure}%

The pair $(M_G,\gamma_G)$ obtained from $(M,\gamma)$ by surgery along $G$ is defined as follows:
Consider the case in which $G$ is a $Y$-graph.
Let $Y_0$ be a $Y$-graph in $\R^3$ and $N(Y_0)$ be its neighborhood, namely a genus three handlebody.
Then there is an orientation-preserving homeomorphism $h\colon N(Y_0) \to N(G)$ sending $Y_0$ to $G$.
Here, we replace $Y_0$ with the six-components framed link drawn in Figure~\ref{fig:YgraphSurgery}.
We denote by $M_G$ the manifold obtained from $G$ by surgery along the framed link in $N(G)$ corresponding to the above link, and let $\gamma_G$ denote the image of $\gamma$ in $M_G$.
In the general case, we perform surgery along each $Y$-graph of $Y(G)$.

\begin{figure}[h]
 \centering
 \includegraphics[width=0.8\columnwidth]{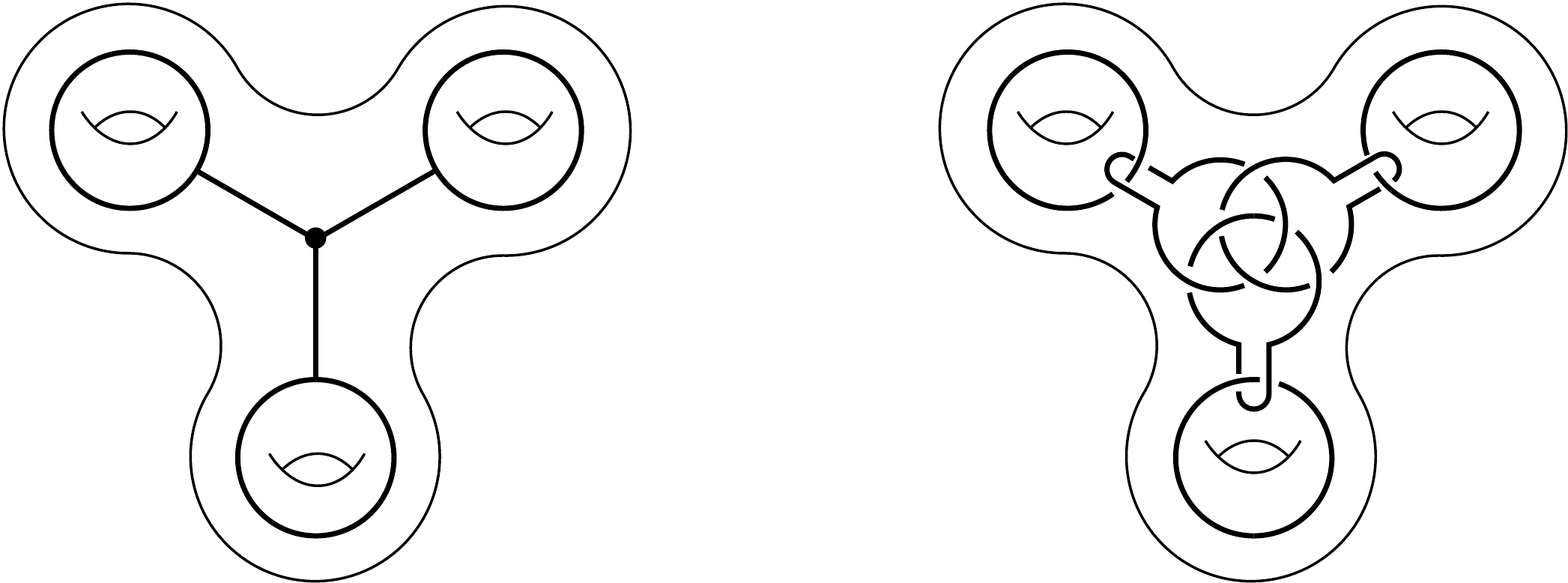}
 \caption{A $Y$-graph, its neighborhood and the corresponding link}
 \label{fig:YgraphSurgery}
\end{figure}%

\begin{definition}
 Let $k\ge1$.
 A pair $(M,\gamma)$ is \emph{$Y_k$-equivalent} to $(M',\gamma')$ if there exists a graph clasper $G$ such that the internal degree of each connected component is equal to $k$ and $G$ satisfies $(M_G,\gamma_G)=(M',\gamma')$.
\end{definition}

It is known that the $Y_k$-equivalence is an equivalence relation (see, for example, \cite[Theorem~3.2]{GGP01}).
Let us fix a $Y_1$-equivalence class $\M^0$ of 3-manifolds with tangles (see \cite[Theorem~7.6]{CHM08}).

\begin{definition}
 We define $\F_k(\M^0)$ to be the $\Q$-vector space spanned by $\{[M,G] \mid M \in \M^0,\ \ideg G=k\}$.
 Here, $[M,G]$ is defined by
 \[[M,G] := \sum_{G' \subset G}(-1)^{|G'|}M_{G'} \in \Q\M^0,\]
 where $G'$ runs over all connected components of $G$, and $|G'|$ is the number of connected components of $G'$.
 (The above sum consists of $2^{|G|}$ terms.)
\end{definition}

We have the \emph{$Y$-filtration}
\[\Q\M^0 = \F_0(\M^0) \supset \F_1(\M^0) \supset \dotsb,\] 
and we set $\G_i(\M^0) := \F_i(\M^0)/\F_{i+1}(\M^0)$.
Moreover, the associated graded vector space $\G(\M^0)$ is defined by $\G(\M^0) := \prod_{i\geq1}\G_i(\M^0)$.

\begin{definition}
 Let $V$ be a $\Q$-vector space and let $d\ge0$.
 A map $f \colon \M^0 \to V$ is a ($V$-valued) \emph{finite-type invariant} of \emph{degree} $d$ if the linear extension $\widetilde{f} \colon \Q\M^0 \to V$ is non-trivial on $\F_d(\M^0)$ and trivial on $\F_{d+1}(\M^0)$.
\end{definition}

\subsection{Proof of the universality}\label{subsec:ProofUniversal}
To state Theorem~\ref{thm:Gar02} below, we review the surgery map $\SS$ according to \cite{CHM08}.
Fix a $Y_1$-equivalence class $\M^0$ of $\LCob_q(w,v)$ and let $g:=|w^\bullet|$, $f:=|v^\bullet|$, $n:=|w^\circ|$.
Let $D$ be a Jacobi diagram in the vector space $\A^Y_i(\fc{g}^{+}\cup\fc{f}^{-}\cup\fc{n}^0)$.

We first construct an oriented surface $S(D)$ by replacing the internal vertices, external vertices and edges to disks, annuli and bands respectively, where the vertex orientation of $D$ induces the orientation of disks, and the bands connect the disks so that the orientations are compatible.
For each annulus, $A$, the core of $A$ is defined to be the push-off into $\Int A$ of the outer boundary of $A$ that is the boundary connecting with an edge.
Namely, the core is an oriented simple closed curve on $A$.

Next, we take a $q$-cobordism $(M,\sigma,m) \in \LCob_q(w,v)$.
The graph clasper $G(D)$ is defined to be the image of an embedding $S(D)$ into $M$ such that each annulus corresponding to the $j^-$- (resp.\ $j^+$-, $j^0$-) colored vertices is the push-off into $\Int M$ of a (unoriented) neighborhood of $m_{-}(\alpha_j) \subset \partial M$ (resp.\ $m_{+}(\beta_j)$, $m_{+}(\delta_j)$) and its core corresponds to the oriented curve $m_{-}(\alpha_j)$ (resp.\ $m_{+}(\beta_j)$, $m_{+}(\delta_j)$).
Here, $G(D)$ is called a \emph{topological realization} of a Jacobi diagram $D$.

\begin{figure}[h]
 \centering
 \includegraphics[width=0.9\columnwidth]{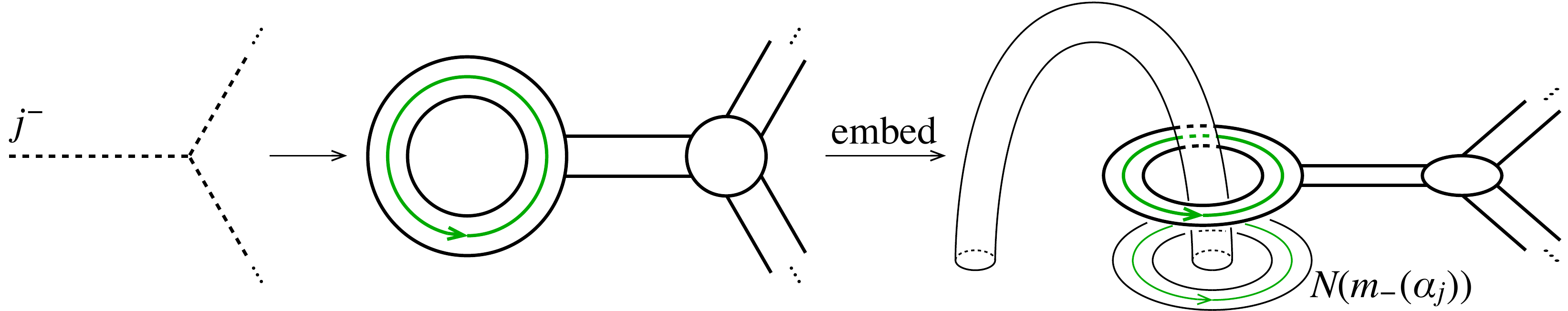}
 \caption{A Jacobi diagram, the corresponding graph clasper and its embedding into a 3-manifold}
 \label{fig:TopologicalRealization}
\end{figure}

The manifold $M_{G(D)}$ depends on the choice of the cobordism $(M,\sigma,m)$ and the topological realization $G(D)$.
However, it is known that $[M,G(D)] \in \G_i(\M^0)$ is independent of the choice, and the following theorem holds.

\begin{theorem}[{\cite[Corollary 1.4]{Gar02}}]\label{thm:Gar02}
 The graded linear map
 \[\SS \colon \A^Y(\fc{g}^{+}\cup\fc{f}^{-}\cup\fc{n}^0) \to \G(\M^0)\]
 defined by $D \mapsto [M,G(D)]$ is well-defined and surjective.
\end{theorem}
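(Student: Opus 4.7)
The plan is to follow Garoufalidis's strategy \cite{Gar02}, adapted from the setup of \cite[Sections~7.2--7.3]{CHM08} to our extended category $\LCob_q$. The proof splits into showing that $\SS$ is well-defined on $\A^Y$ and then that it is surjective onto $\G(\M^0)$.

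For well-definedness I would proceed in three steps. First, for fixed $M \in \M^0$ and a fixed Jacobi diagram $D$, show that $[M, G(D)] \in \G_{\ideg D}(\M^0)$ does not depend on the choice of topological realization: any two embeddings of $S(D)$ with the prescribed leaf-homology classes are related by ambient isotopies and by standard clasper calculus moves (leaf slide, zip, half-twist on a band), each of which alters the surgery result only by $Y_{\ideg D + 1}$-equivalence (see \cite[Theorem~3.2]{GGP01}). Second, show independence from the choice of $M \in \M^0$: any two such cobordisms are $Y_1$-equivalent, so changing $M$ perturbs $[M, G(D)]$ only through surgeries along an additional graph clasper, which contributes a correction in $\F_{\ideg D + 1}$. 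Third, verify that $\SS$ descends to $\A^Y$: the AS relation corresponds to reversing the cyclic order of the three bands at a node, which by Habiro's clasper moves is a higher-order perturbation; the IHX relation follows from the clasper IHX identity, that is, the three graph claspers differing locally by the I--H--X patterns produce $Y_{\ideg D + 1}$-equivalent surgeries.

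For surjectivity, let $x \in \G_i(\M^0)$ be represented by $[M, G]$ with $\ideg G = i$. The fission rule of Figure~\ref{fig:FissionRule} together with an elementary counting argument yields $[M, G] \equiv [M, Y(G)] \pmod{\F_{i+1}(\M^0)}$, so I may assume $G$ is a disjoint union of $Y$-graphs. It then remains to express $[M, G']$ for such a $G'$ as $\SS(D)$ for an explicit $D \in \A^Y_i$. Each leaf $\ell$ of $G'$ represents a class in $H_1(M)$, which by Lemma~\ref{1'} is freely generated by $m_-(\alpha_j)$, $m_+(\beta_j)$ and $m_+(\delta_j)$. Iterated leaf-slide calculus lets me replace $\ell$ by a linear combination of parallel leaves representing precisely these basis curves; a null-homologous residue can be removed modulo $Y_{i+1}$-equivalence by the standard bounding-leaf argument. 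The resulting disjoint union of $Y$-graphs with leaves of the three distinguished types is, by construction, the topological realization of a Jacobi diagram whose image under $\SS$ is $x$.

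The main obstacle I anticipate is the leaf-reduction step in the presence of the new $\circ$-labels: a $\delta$-type leaf meets the tubes introduced by digging, and one must ensure that clasper moves do not interact pathologically with these tubes, and that leaves winding around the tubes in the ``wrong'' direction (which homologically would give classes not in $m_+(D_{w_+})$ but in $m_-(D_{w_-})$) can indeed be killed modulo $\F_{i+1}(\M^0)$. This is precisely where Lagrangian condition~(2) of Section~\ref{subsec:Cob}, not merely condition~(1), enters: it forces the missing $m_-(\delta_j)$ classes to be expressible in terms of the chosen basis plus null-homologous corrections, which is what makes the homological reduction lift to an identity in $\G_i(\M^0)$.
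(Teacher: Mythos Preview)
The paper does not supply its own proof of this theorem: it is stated with attribution to \cite[Corollary~1.4]{Gar02} and then used as a black box in the proof of Theorem~\ref{thm:universal}. So there is nothing in the paper to compare your proposal against; your outline is a reasonable reconstruction of the standard clasper-calculus argument from \cite{GGP01,Gar02,CHM08}, adapted to the extra $\circ$-labels.

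Two minor comments on the proposal itself. First, in the surjectivity step you write $[M,G]\equiv[M,Y(G)]\pmod{\F_{i+1}}$; in fact the paper (first display in the proof of Theorem~\ref{thm:universal}) shows the exact identity $[M,Y(G)]=(-1)^{i+c}[M,G]$, so no error term is needed here. Second, your final paragraph slightly misattributes the role of Lagrangian condition~(2). What you actually need for the leaf-reduction step is that $H_1(M)$ is \emph{freely} generated by the classes $m_{-,\ast}(\alpha_j)$, $m_{+,\ast}(\beta_j)$, $m_{+,\ast}(\delta_j)$, and this is exactly condition~(1${}'$) of Lemma~\ref{1'}, which is equivalent to (1) under (2). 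Condition~(2) by itself constrains $m_{+,\ast}(A_{w_+})$, not $m_{-,\ast}(D_{w_-})$; the point is simply that once (1${}'$) holds, any leaf class whatsoever --- including one representing $m_{-,\ast}(\delta_j)$ --- decomposes in the chosen basis, and the standard leaf-slide/null-homologous-leaf moves of \cite{GGP01} finish the job exactly as in the $\bullet$-only case.
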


Using this theorem, we prove the next theorem saying that $\Zt$ is universal among rational-valued finite-type invariants.
The following proof is an obvious extension of the proof of \cite[Theorem 7.11]{CHM08}.

\begin{theorem}\label{thm:universal}
 Let $\A^Y:=\A^Y(\fc{g}^{+}\cup\fc{f}^{-}\cup\fc{n}^0)$.
 The map $\Zt^Y_i \colon \M^0 \to \A^Y\times\{\sigma\} = \A^Y$ is a finite-type invariant of degree at most $i$.
 Moreover, the induced map $\Gr\Zt^Y \colon \G(\M^0) \to \G(\A^Y)=\A^Y$ satisfies
 \[\Gr\Zt^Y\circ\SS(D)=(-1)^{i+c+e}D,\]
 where $i:=\ideg D$, $c:=|D|$ and $e$ is the number of internal edges of $D$.
 Consequently, $\SS$ and $\Gr\Zt$ are isomorphisms, and $\Zt^Y_i$ is a finite-type invariant of degree $i$.
\end{theorem}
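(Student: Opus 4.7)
The plan is to follow the strategy of Section~7 of \cite{CHM08}, adapting the argument to the enlarged category $\LCob_q$ with its $\circ$-labelled boundary circles. The novelty is the presence of $0$-coloured vertices in the Jacobi diagrams: topologically these correspond to annuli around the meridians $m_{+}(\delta_j)$ of the tubes, and algebraically they must pass through the extra map $\chi_{\fc{n}^0}^{-1}\circ\chi_{\fc{n}^0,\fc{n}^{0'}}$ that appears in the composition law of $\tsA$. By Lemma~\ref{lem:filter} this map acts as the identity modulo an increase in the i-filter, so it contributes nothing at leading order and the combinatorics of the calculation reduces essentially to the one in \cite{CHM08}.

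\textbf{Step 1 (degree at most $i$).} Let $G$ be a graph clasper in $M$ with $k \geq i+1$ connected components. Using clasper calculus, surgery along $G$ can be rewritten as surgery along an explicit framed link $L_G$. Combining Lemma~\ref{lem:key} (functoriality of $\Zt$), the surgery formula defining $\ZKLMO$, and the fact that $\Zt(M)$ is group-like, I would first show that for every sub-clasper $G' \subset G$,
\[\Zt(M_{G'}) - \Zt(M) \ \in\ \{x \in \A : \ifil x \geq \ideg G'\}.\]
Expanding $[M,G] = \sum_{G'\subset G}(-1)^{|G'|} M_{G'}$ and using multilinearity of the i-filter (together with the embedding $\A^Y \hookrightarrow \A$ with the two filtrations commensurate), one deduces that $\Zt([M,G])$ lies in i-filter at least $\ideg G \geq i+1$, hence $\Zt^Y_i$ vanishes on $\F_{i+1}(\M^0)$.

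\textbf{Step 2 (the leading-order formula).} For $D \in \A^Y_i$, pick a representative $(M,G(D))$ in the construction of $\SS(D)$ and compute the leading i-filter part of $\Zt([M,G(D)])$. Applying the fission rule to write $Y(G(D))$ as a disjoint union of $i$ $Y$-graphs, replacing each by its six-component surgery link (Figure~\ref{fig:YgraphSurgery}), and invoking the definition of $\ZKLMO$ and the formal Gaussian integral (Definition~\ref{def:GaussianInt}), the problem reduces to the evaluation of a pairing $\ang{[-L^{-1}/2],P}_S$ in which the matrix $L$ has the block shape dictated by the linkings of the leaves. The leaves pair with the struts produced by the topological realisation, and Lemma~\ref{lem:filter} shows that the $\chi_{\fc{n}^0}^{-1}\chi_{\fc{n}^0,\fc{n}^{0'}}$ normalisation affects only higher i-filter terms, so at leading order only the diagram $D$ itself survives. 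The sign $(-1)^{i+c+e}$ then emerges from three sources: the $i$ factors of $-L^{-1}/2$ (one per $Y$-graph), the $e$ re-orientations of internal edges required to match the cyclic orders at trivalent vertices via AS, and the $c$ connected components each contributing through its cyclic symmetry; these are combined exactly as in Lemma~7.9 of \cite{CHM08}.

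\textbf{Step 3 (conclusion and main obstacle).} Theorem~\ref{thm:Gar02} gives the surjectivity of $\SS$. Since $\Gr\Zt^Y\circ\SS(D) = (-1)^{i+c+e}D$ is non-zero on every Jacobi diagram, $\SS$ is also injective, hence an isomorphism, with inverse $\pm\,\Gr\Zt^Y$; in particular $\Zt^Y_i$ is non-trivial on $\F_i(\M^0)/\F_{i+1}(\M^0)$, so its degree is exactly $i$. The main obstacle is Step~2, and within it the most delicate point is verifying that the $0$-coloured leaves introduce no spurious signs or normalisation corrections: this amounts to checking that the meridian curves $m_{+}(\delta_j)$ carry canonical orientations compatible with the conventions underlying $\TT_g$ and that the double-$\chi$ map in the composition of $\tsA$, when applied to the pure-strut leading terms, is trivial. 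Once these book-keeping points are settled, the argument of \cite[Theorem~7.11]{CHM08} goes through unchanged.
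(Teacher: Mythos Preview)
Your proposal misidentifies the structure of the argument, and Step~1 contains a genuine error. You take $G$ with ``$k\ge i+1$ connected components'', but the $Y$-filtration is by \emph{internal degree}, not by number of components: a single connected clasper with $i+1$ nodes already lies in $\F_{i+1}$. More seriously, your claim that $\Zt(M_{G'})-\Zt(M)$ has i-filter $\ge\ideg G'$ for each sub-clasper $G'$, together with an unspecified ``multilinearity'', does not by itself give i-filter $\ge\ideg G$ for the alternating sum $[M,G]$: you would need that iterated differences gain i-filter \emph{additively}, and you give no mechanism for this.

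The paper's proof supplies exactly that mechanism, via a functorial decomposition you do not use. One first applies the fission rule and proves the combinatorial identity $[M,Y(G)]=(-1)^{i+c}[M,G]$, reducing to a disjoint union of $i$ $Y$-graphs. The crucial step is then to realise surgery along each $Y$-graph as \emph{composition in $\LCob_q$ with the generator $Y\in\LCob((\bullet\bullet)\bullet,e)$}: there is a fixed cobordism $R\in\LCob(w,v\otimes((\bullet\bullet)\bullet)^{\otimes i})$ such that $M_{G'}=(\Id_v\otimes\bigotimes_j c_j)\circ R$ with $c_j\in\{Y,\ (\varepsilon\otimes\varepsilon)\otimes\varepsilon\}$. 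Functoriality of $\Zt$ then gives
\[
(-1)^{i+c}\Zt([M,G])=\Bigl(\Id\otimes\textstyle\bigotimes_{j=1}^i\bigl(\Zt(\varepsilon^{\otimes 3})-\Zt(Y_j)\bigr)\Bigr)\circ\Zt(R),
\]
and since $\Zt(\varepsilon^{\otimes 3})-\Zt(Y)=-\Zt_1(Y)+(\ideg>1)$ has i-filter $\ge 1$ (Table~\ref{tab:Value}), the tensor product has i-filter $\ge i$ and the bound follows immediately. This is where the ``multilinearity'' actually comes from.

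Your Step~2 is likewise off-track. The paper does \emph{not} compute the leading term by feeding the six-component surgery links into the formal Gaussian integral. Instead it stays inside $\tsA$: it computes $\Lk(R)$ explicitly in terms of the colours of leaves in $\OO$ and the fission-pairs in $\T$, and then applies the composition formula of Lemma~\ref{lem:Composition} to the display above. The pairing reduces to contracting $\bigsqcup_j(-\Zt_1(Y_j))$ (each equal to the $Y$-diagram, since $\Zt_1(Y)=-Y$) against $\bigsqcup_{(t_1,t_2)\in\T_<}\bigl(-\strutgraph{t_2^\ast}{t_1^\ast}\bigr)$, giving $(-1)^eD$. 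Thus $(-1)^{i+c}$ comes from the fission identity and $(-1)^e$ from the $e$ negative struts reconnecting the internal edges; your attribution of the three signs to $-L^{-1}/2$, AS moves, and ``cyclic symmetry of components'' does not match this and is not obviously correct. The $0$-coloured vertices require no special treatment here because Lemma~\ref{lem:Composition} already incorporates $\chi_{\fc{n}^0}^{-1}\chi_{\fc{n}^0,\fc{n}^{0'}}$, and at i-degree exactly $i$ no $0$-labelled strut terms survive the contraction.
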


\begin{proof}
 Let us prove that $\Zt^Y_{i-1}$ is a finite-type invariant of degree at most $i-1$.
 We first take a graph clasper $G$ in a cobordism $M=(M,\sigma,m) \in\M^0$ such that $[M,G] \in\F_i(\M^0)$.
 Let $i:=\ideg G$, $c:=|G|$ and $Y(G) = G_1\sqcup\dots\sqcup G_i$.
 Then we have
\begin{align*}
 [M,Y(G)] &= \sum_{J \subset\fc{i}} (-1)^{|J|} M_{\bigsqcup_{j\in J} G_j} \\
 &= \sum_{G'\subset G} (-1)^{|G\setminus G'|+|Y(G\setminus G')|} (-1)^{|Y(G')|} M_{G'} \\
 &= \sum_{G'\subset G} (-1)^{c-|G'|+i} M_{G'} = (-1)^{i+c}[M,G],
\end{align*}
 where the second equality follows from the equality
 \[\prod_{j=1}^{|G\setminus G'|} \left((1+t_j)^{|Y(G_j)|}-t_j^{|Y(G_j)|}\right)\Big|_{t_j=-1} = (-1)^{|G\setminus G'|+|Y(G\setminus G')|}\]
 and the fact that surgery along a part of a connected component is same as doing nothing.
 Using the transformation depicted in Figure~\ref{fig:InsertHopfLink}, the bottom-top tangle $\D^{-1}(M_G)$ can be expressed as Figure~\ref{fig:universal}, where $R$ is some cobordism of $\LCob(w,v\otimes((\bullet\bullet)\bullet)^{\otimes i})$.
 Moreover, $Y_1,\dots,Y_i$ are the bottom-top tangles illustrated in Figure~\ref{fig:YbyClasper}, which are same as the cobordism $Y$ listed in Table~\ref{tab:Gen}.

\begin{figure}[h]
 \centering
\begin{minipage}{0.45\columnwidth}
 \centering
 \includegraphics[width=0.6\columnwidth]{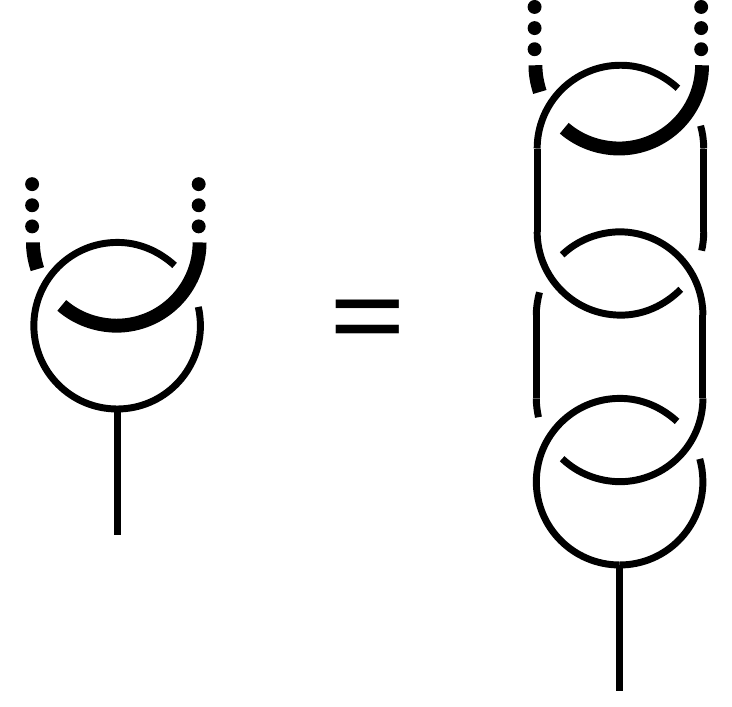}
 \caption{A transformation}
 \label{fig:InsertHopfLink}
\end{minipage}
\begin{minipage}{0.45\columnwidth}
 \centering
 \includegraphics[width=0.7\columnwidth]{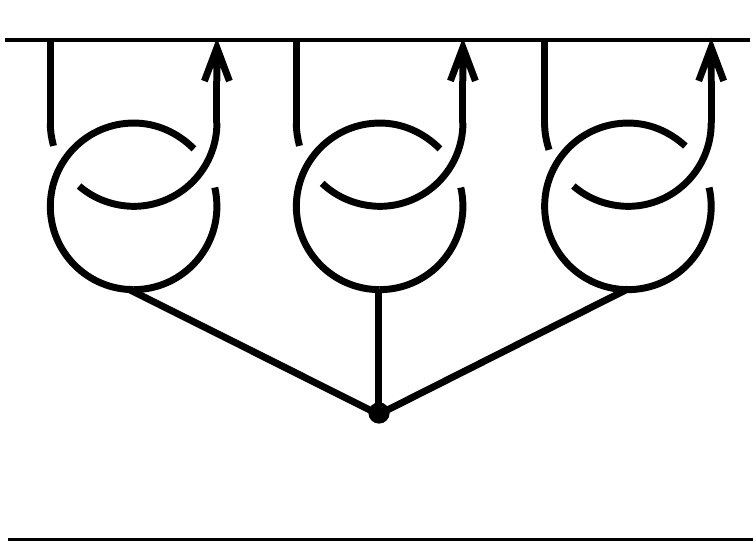}
 \caption{An expression of $Y$ by using a $Y$-graph}
 \label{fig:YbyClasper}
\end{minipage}
\end{figure}

\begin{figure}[h]
 \centering
 \includegraphics[width=0.7\columnwidth]{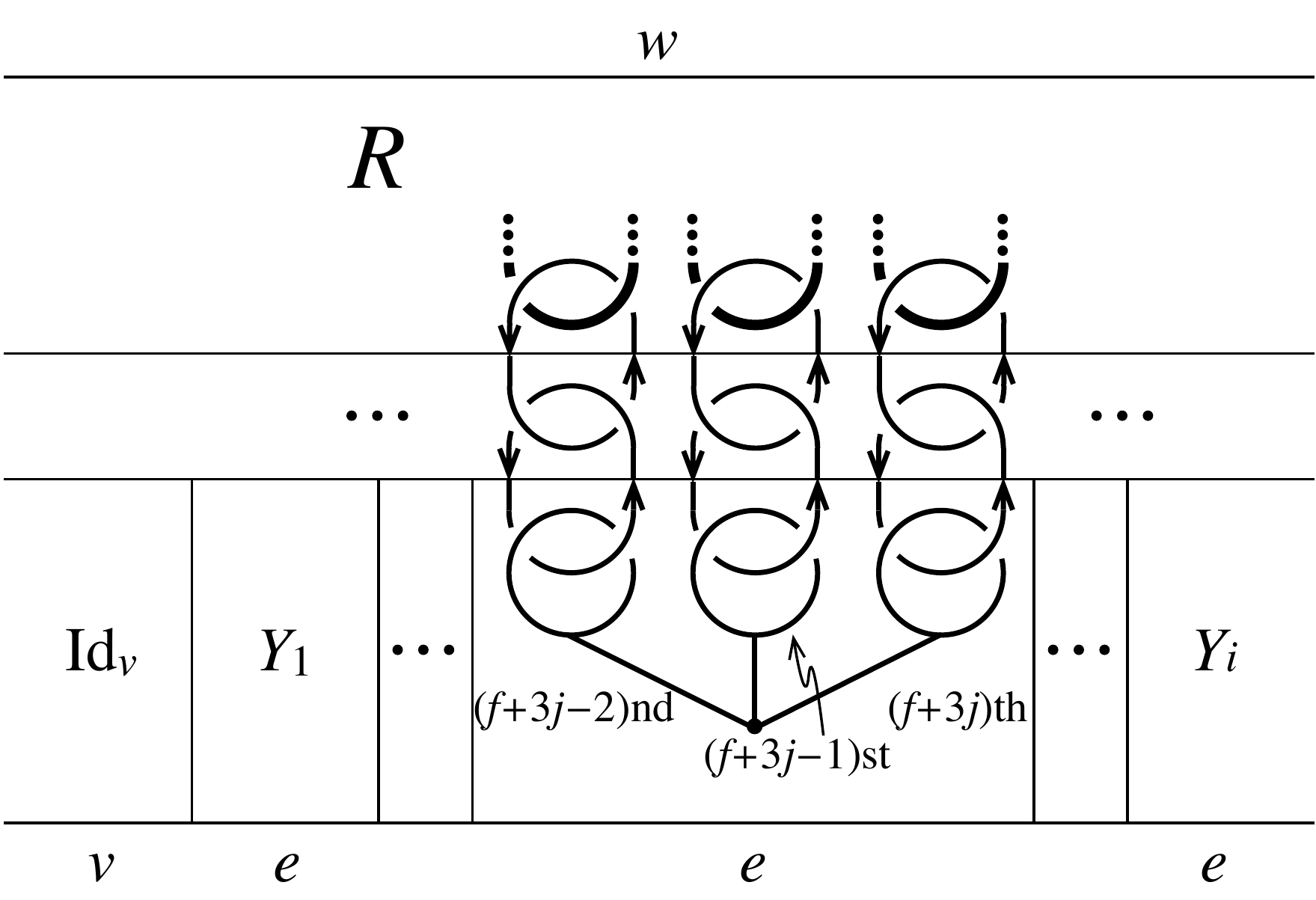}
 \caption{A bottom-top tangle (The thick lines are parts of tangles and claspers.)}
 \label{fig:universal}
\end{figure}
 
 Then we have, by the functoriality of $\Zt$,
\begin{align}\label{eq:Universal}
 & (-1)^{i+c}\Zt([M,G]) = \Zt([M,Y(G)]) \\
 &= \left(\Zt(\Id_v) \otimes \bigotimes_{j=1}^i \left( \Zt((\varepsilon\otimes\varepsilon)\otimes\varepsilon) -\Zt(Y_j) \right) \right) \circ \Zt(R) \notag \\
 &= \left( \left[\sum_{k=1}^f \strutgraph{k^+}{k^-}\right] \sqcup \bigsqcup_{j=1}^i \left(-\Zt_1(Y_j) +(\ideg>1)\right) \right) \circ \Zt(R). \notag
\end{align}
 Since the composition in $\tsA$ preserves the filtration $\{\tsA_{\geq i}\}_{i\geq0}$, we conclude $\Zt^Y_{i-1}([M,G])=0$.
 
 Next, we prove the equality in the second statement.
 We take a Jacobi diagram $D \in\A^Y_i$ and define $G$ to be a topological realization $G(D)$ in some cobordism $M$.
 The goal is to show $\Gr\Zt([M,G])=(-1)^{i+c+e}D$.
 We first define three sets by
\begin{align*}
 \LL &:= \bigcup_{j=1}^i \{f+3j-2, f+3j-1, f+3j\} \subset \fc{f+3i}, \\
 \OO &:= \{l \in \LL \mid \text{the $l$th leaf is originally a leaf}\}, \\
 \T &:= \{(t_1,t_2) \in \LL\times\LL \mid \text{the $t_1$th and $t_2$th leaves arise by the fission rule}\}.
\end{align*}
 Since the leaf corresponding to $l \in \OO$ arises from some external vertex of $D$, we define $c(l)$ to be its color.
 On the other hand, $\T$ is divided into $\T_<$ and $\T_>$, where $\T_\lessgtr:=\{(t_1,t_2) \in \T \mid t_1 \lessgtr t_2\}$ and $|\T_\lessgtr|=e$.
 We now have
\begin{align*}
 \Lk(R) = \Lk(M_G) +\sum_{l\in\OO}(E_{l^-,c(l)}+E_{c(l),l^-}) -\sum_{(t_1,t_2)\in\T}E_{t_1^-,t_2^-}\ (=:B),
\end{align*}
 where $E_{i,j}$ is the matrix unit.
 It follows from this equality, \eqref{eq:Universal} and Lemma~\ref{lem:Composition} that
\begin{align*}
 & (-1)^{i+c}\Zt^Y_i([M,G]) \\
 &= \ang{ \left( \bigsqcup_{j=1}^i -\Zt_1(Y_j) \middle/ \parbox{7.8em}{$k^+ \mapsto k^\ast +B^{--}k^{-} \\\hspace*{2.5em} +B^{+-}k^{-}$} \right), \left( [B^{--}/2+B^{-0}] \middle/ k^{-}\mapsto k^\ast \right) }_{\fc{f+3i}^\ast} \\ 
 &= \ang{ \left( \bigsqcup_{j=1}^i -\Zt_1(Y_j) \middle/ \parbox{11em}{$l^+ \mapsto
\begin{cases}
 c(l) & \text{if $l\in\OO$}, \\
 l^\ast & \text{otherwise.}
\end{cases}$} \right),
 \bigsqcup_{(t_1,t_2)\in\T_<} -\strutgraph{t_2^\ast}{t_1^\ast} }_{\{l^\ast \mid l\in\LL\setminus\OO\}} \\
 &= (-1)^e D,
\end{align*}
 where the second equality follows from the following argument:
 Since the linear combination on the left in the angular brackets contains no $k^0$-colored vertex, $[B^{-0}]$ can be moved to the left as Lemma~\ref{lem:PreComposition}.
\end{proof}

\section{Relations with knot theory}\label{sec:Knot}
We focus on cobordisms derived from knots with additional information.

\subsection{Cobordisms between two annuli}\label{subsec:Annuli}
We first review a natural construction of a cobordism from a knot according to \cite[Section~2.2]{CFK11}.
Let $Y$ be a homology sphere and let $K$ be an oriented framed knot in $Y$.
The boundary of $Y\setminus \Int N(K)$ is a torus with the oriented longitude determined by the orientation and framing of $K$.
Here, let $m\colon R^{\circ}_{\circ,\Id} \to \partial(Y\setminus \Int N(K))$ be a homeomorphism that maps the oriented simple closed curves $\delta_1$ and $\widehat{\delta_1}$ depicted in Figure~\ref{fig:CFK11} to the meridian and longitude of $K$, respectively.
Then we obtain $\C(Y,K):=(Y\setminus \Int N(K), \Id_{\SS_1}, m) \in \LCob_q(\circ,\circ)$.

\begin{figure}[h]
 \centering
 \includegraphics[width=0.2\columnwidth]{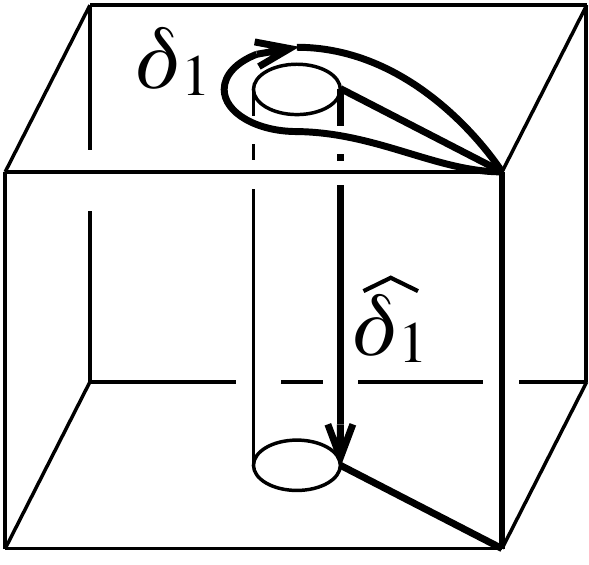}
 \caption{The reference surface $R^{\circ}_{\circ,\Id}$ with two curves $\delta_1$, $\widehat{\delta_1}$}
 \label{fig:CFK11}
\end{figure}

\begin{proposition}
 Let $K \subset Y$ be an oriented framed knot in a homology sphere.
 Then, under the canonical isomorphism $\A(\downarrow)\cong\A(\circlearrowleft)$,
 \[\chi_{\fc{1}^0}\Zt(\C(Y,K)) = \ZKLMO(Y\setminus\Int[-1,1]^3,K) \sharp \nu^{-1}.\]
\end{proposition}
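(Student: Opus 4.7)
The plan is to identify $\D^{-1}(\C(Y,K))$ explicitly and then, exploiting that $\TT_0$ is trivial, reduce the claim to the standard closure formula $\ZK(\bigcirc_0)=\nu$. For the first step I would choose a $3$-ball $D^3 \subset Y$ meeting $K$ transversely in one unknotted arc carrying the product framing. Setting $B := Y \setminus \Int D^3$ (a homology cube since $Y$ is a homology sphere) and $\gamma_1^0 := K \cap B$ (a framed arc with endpoints on $\partial B \cong S^2$), and parametrizing $\partial B$ as the boundary of $[-1,1]^3$ with these endpoints at the top and bottom $r_1$-points, we obtain a bottom-top tangle of type $(\circ,\circ,\Id)$. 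A direct check shows $\D(B,\gamma_1^0)=\C(Y,K)$: the complement of a tubular neighborhood of $\gamma_1^0$ in $B$ is $Y \setminus N(K)$, with $\delta_1$ corresponding to the meridian of $K$ and $\widehat{\delta_1}$ to the longitude determined by the framing.

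Next, I would unwind the definition of $\Zt$ in the case $g=f=0$, $n=1$. Here $\TT_0 = \varnothing$, so the composition with $(\TT_0,\Id_{\SS_1})$ in $\tsA$ reduces to applying $\chi_{\fc{1}^0}^{-1}\chi_{\fc{1}^0,\fc{1}^{0'}}$ to $\chi_{\fc{1}^0}^{-1} Z(B,\gamma_1^0)$. Since this element has no $\fc{1}^{0'}$-colored vertices, the definition of $\chi_{S,S'}$ makes $\chi_{\fc{1}^0,\fc{1}^{0'}}$ coincide with $\chi_{\fc{1}^0}$ and the two $\chi$'s cancel, giving $\chi_{\fc{1}^0}\Zt(\C(Y,K)) = Z(B,\gamma_1^0)$, which lives in $\A(\downarrow)\cong\A(\circlearrowleft)$.

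Finally, I would fix a surgery presentation $(L,\gamma_1^0)$ of $(B,\gamma_1^0)$ with $L\subset [-1,1]^3$; this simultaneously serves as a surgery presentation of $K$ in $Y\setminus\Int[-1,1]^3$, where $K$ is obtained from $\gamma_1^0$ by a standard $0$-framed closing arc in the complementary ball. The defining formulas for $Z(B,\gamma_1^0)$ and $\ZKLMO(Y\setminus\Int[-1,1]^3,K)$ share identical $U_\pm$-factors and Gaussian integration over $\pi_0 L$, and differ only in the replacement of $\ZK(L^\nu\cup\gamma_1^0) \in \A(L\sqcup\downarrow)$ by $\ZK(L^\nu\cup K) \in \A(L\sqcup\circlearrowleft)$. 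The standard closure formula for the Kontsevich invariant, reflecting $\nu = \ZK(\bigcirc_0)$ against the trivial Kontsevich invariant of an untwisted $0$-framed arc, gives $\ZK(L^\nu\cup K) = \ZK(L^\nu\cup\gamma_1^0)\sharp\nu$ on the $K$-component; since $\nu$ is supported only on this component, it commutes past $\chi_{\pi_0 L}^{-1}$ and the formal Gaussian integration, yielding $\ZKLMO(Y\setminus\Int[-1,1]^3,K) = Z(B,\gamma_1^0)\sharp\nu$ and the claim follows. The main obstacle is the careful framing bookkeeping in the first step: one must verify that the framing induced on $\gamma_1^0$ by the bottom-top tangle structure is precisely the one that, under closure by a $0$-framed arc in the complementary $3$-ball, recovers $K$ with its specified framing, so that exactly one factor of $\nu$ appears.
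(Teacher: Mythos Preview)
Your proposal is correct and follows essentially the same route as the paper's proof: identify $\D^{-1}(\C(Y,K))$ as the pair $(B,\gamma)$ with closure $(\widehat{B},\widehat{\gamma})=(Y,K)$, observe that $\TT_0=\varnothing$ collapses $\Zt$ to the unnormalized $Z=\ZKLMO$ on $(B,\gamma)$, and then invoke the closure formula $\ZKLMO(\cdot,K)=\ZKLMO(\cdot,\gamma)\sharp\nu$. The paper's proof simply records the two displayed equalities without spelling out the $\TT_0$ and $\chi_{S,S'}$ cancellations or the surgery-presentation argument for the $\nu$ factor, all of which you supply explicitly.
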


\begin{proof}
 Let $(B,\gamma):=\D^{-1}(\C(Y,K))$.
 Then the closure $(\widehat{B},\widehat{\gamma})$ defined in Section~\ref{subsec:BTtangle} coincides with the pair $(Y,K)$.
 Therefore,
\begin{align*}
 \chi_{\fc{1}^0}\Zt(\C(Y,K)) &= \ZKLMO(Y\setminus\Int[-1,1]^3,\gamma) \\
 &= \ZKLMO(Y\setminus\Int[-1,1]^3,K) \sharp \nu^{-1}.
\end{align*}
 The proof is completed.
\end{proof}

Since $\Zt$ is a functor, we have
\[\Zt(\C(Y_1,K_1)) \circ \Zt(\C(Y_2,K_2)) = \Zt(\C(Y_1,K_1) \circ \C(Y_2,K_2)).\]
Here, $\C(Y_1,K_1) \circ \C(Y_2,K_2)$ can be interpreted as a certain connected sum defined as follows:
Consider an embedding between pairs $\iota_i \colon ([-1,1]^3,0\times0\times[-1,1]) \to (Y_i,K_i)$ that preserves the orientations and framing.
Then the above composition is equal to the pair
\[\left( (Y_1,K_1)\setminus\Int[-1,1]^3 \sqcup (Y_2,K_2)\setminus\Int[-1,1]^3 \right) / \iota_1(x,y,z)\sim\iota_2(x,y,-z).\]

\subsection{Cobordisms derived from a link and its Seifert surfaces}\label{subsec:SeifertSurf}
Let $Y$ be an integral homology sphere and let $L$ be an oriented link in $Y$.
One can take a Seifert surface $F$ of $L$, and let $g$ be its genus.
We fix an orientation-preserving homeomorphism $h\colon F_w \to F$, where $w \in \Mon(\bullet,\circ)$ satisfies $|w^\bullet|=g$, $|w^\circ|=|L|-1$.
Using the above information, we construct the cobordism $(M,\Id,m) \in \Cob(w,w)$ (that is not necessarily Lagrangian) as follows:
Define $M$ to be the manifold obtained from $Y \setminus \Int N(L)$ by cutting along $F$.
Now, $\partial M$ is the union of the surface $F_+$ to which the normal vectors point, $F_-$ and the $[-1,1]\times L$.
Therefore, we decide $m_+\colon F_w \to F_-$ and $m_-\colon F_w \to F_+$ by $h$, and the rest is uniquely determined by the meridians of $L$.

\begin{example}
 Let $L$ be the negative Hopf link in $S^3$.
 Since we need a Seifert surface that is easy to see, we represent the pair $(S^3,L)$ by surgery along the $(+1)$-framed unknot depicted in Figure~\ref{fig:HopfLinkSeifert}.
 One can now easily construct a Seifert surface by avoiding the surgery knot, and we give the parametrization indicated by the line drawn in the middle of Figure~\ref{fig:HopfLinkSeifert}.
 Then we have the Lagrangian cobordism $M$ illustrated in Figure~\ref{fig:HopfLinkSeifert}, where the oriented closed curve represents $h(\widehat{\delta_1})$.
 
 Hence, $\Zt(M)$ is exactly equal to $\chi_{\fc{1}^0}^{-1} \exp_\natural \left(\dfrac{1}{2}\ \raisegraph{-1em}{2.5em}{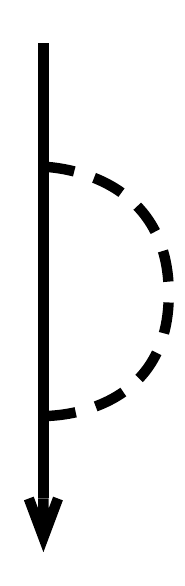}\right)$.
 It follows from \cite[Exercise~5.4]{BLT03} that
\begin{align*}
 \Zt(M) &= \Omega \sqcup \left(\exp_\sqcup \left(\frac{1}{2}\dstrutgraph{1^0}{1^0} \right) \right)
 = \left[\frac{1}{2}\dstrutgraph{1^0}{1^0} +\frac{1}{48}\dPhigraph{1^0}{1^0} +(\ideg>2)\right].
\end{align*}
\end{example}

\begin{figure}[h]
 \centering
 \includegraphics[width=0.9\columnwidth]{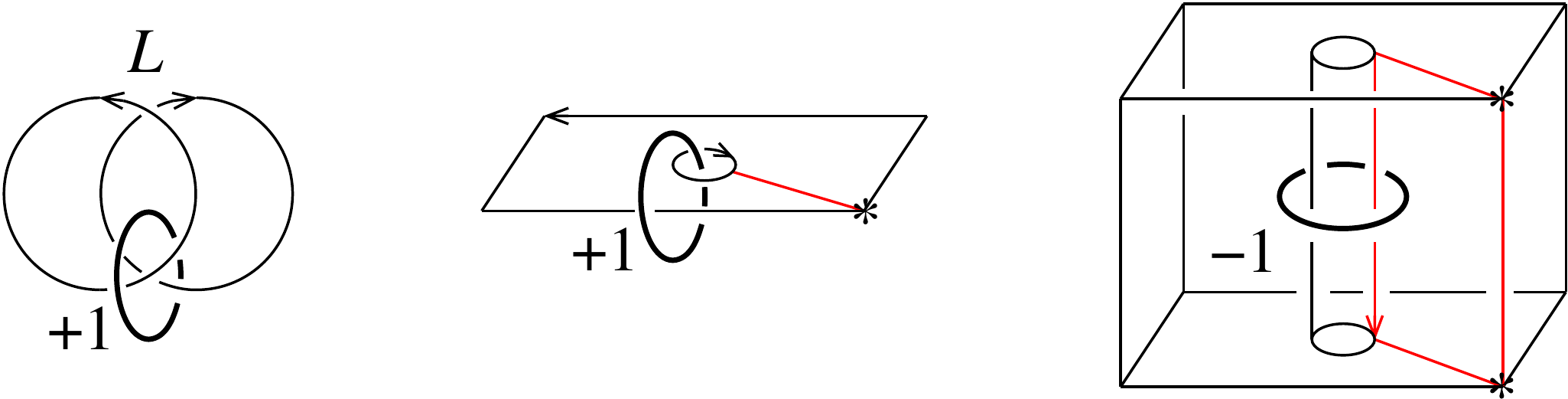}
 \caption{A Hopf link $L$, its Seifert surface and the cobordism obtained by cutting}
 \label{fig:HopfLinkSeifert}
\end{figure}%

\subsection{Milnor invariants of string links}\label{subsec:MilnorInv}
Habegger and Masbaum~\cite{HaMa00} proved that the first non-vanishing Milnor invariant of a string link appears in the tree reduction of its Kontsevich invariant.
The same is true for the Kontsevich-LMO invariant of string links in a homology cube, which was shown in \cite{Mof06}.
Moreover, the same holds for the LMO functor of the cobordisms derived from string links in a homology cube, which was proven in \cite{CHM08}. 
We prove that the same is true for the extension of the LMO functor.

We first review the Milnor invariants of string links along \cite{CHM08}.

\begin{definition}
 A \emph{string link} on $l$ strands is an orientation-reversed bottom-top tangle of type $(w,w,\Id_{\SS_l})$, where $w$ is a word in $\Mon(\circ)$ of length $l$.
 Namely, each strand runs from $r_i\times(-1)$ to $r_i\times1$.
\end{definition}

Actually, we are interested in the monoid
\[\SSS_l := \{\text{string link $(B,\sigma)$ on $l$ strands} \mid \text{$B$ is an \emph{integral} homology cube}\}.\]
Let $(B,\sigma) \in \SSS_l$.
Then $(S,\Id_{\SS_l},s):=\D(B,\sigma)$ is a cobordism from $F_w$ to $F_w$.
However, in this subsection, we use the surface $D_l$ with the oriented simple closed curves $x_1,\dots,x_l$ as in Figure~\ref{fig:D} instead of $F_w$ with the oriented simple closed curves $\delta_1,\dots,\delta_l$
Note that the orientation of $x_i$ differs from the one of $\delta_i$.

\begin{figure}[h]
 \centering
 \includegraphics[width=0.4\columnwidth]{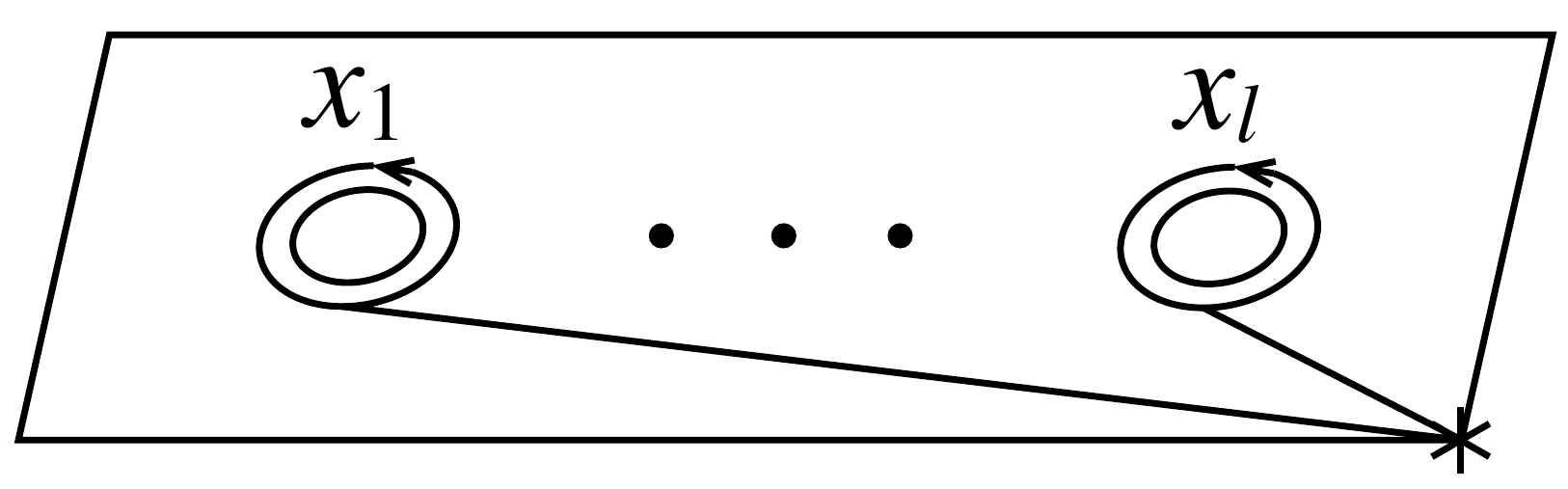}
 \vspace{-0.8em}
 \caption{The surface $D_l = \Sigma_{0,l+1}$ with the closed curves $x_1,\dots,x_l$}
 \label{fig:D}
\end{figure}

Let $\varpi$ denote the fundamental group $\pi_1(D_l,\ast)$ and $\varpi_n$ is its lower central series defined by $\varpi_k:=[\varpi_{k-1},\varpi]$, $\varpi_1=\varpi$.
Since the continuous maps
\[s_{+},\ s_{-}\colon D_l \hookrightarrow \partial(D_l\times[-1,1]) \xrightarrow{s} S\]
induce isomorphisms on their first homologies and surjections on their second homologies (\cite[Theorem~5.1]{Sta65}), they induce isomorphisms
\[s_{+,\ast},\ s_{-,\ast}\colon \varpi/\varpi_k \to \pi_1(S)/\pi_1(S)_k\]
for all $k\ge1$.
The $k$th \emph{Artin representation} is the monoid anti-homomorphism $A_k\colon \SSS_l \to \Aut(\varpi/\varpi_{k+1})$ defined by $A_k(B,\sigma) := s_{+,\ast}^{-1} \circ s_{-,\ast}$.
We set $\SSS_l[k]:=\Ker A_k$, and the filtration
\[\SSS_l=\SSS_l[1] \supset \SSS_l[2] \supset \dotsb\]
is obtained.

From now on, we suppose that each strand of $\sigma$ is 0-framed, and let $\lambda_i$ be the preferred longitude of the $i$th strand of $\sigma$.
It is well known that $(B,\sigma) \in \SSS_l$ belongs to $\SSS_l[k]$ if and only if $\lambda_i \in \pi_1(S)_k$ for all $i$ (see, for example, \cite[Remark~5.1]{HaMa00}).

\begin{definition}
 The $k$th \emph{Milnor invariant} is the monoid homomorphism $\mu_k\colon \SSS_l[n] \to \varpi/\varpi_2 \otimes \varpi_k/\varpi_{k+1}$ defined by
\[\mu_k(B,\sigma) := \sum_{i=1}^l x_i \otimes s_{+,\ast}^{-1}(\lambda_i).\]
\end{definition}

The following lemma is well known and directly follows from the above fact.

\begin{lemma}\label{lem:mu}
 Let $(B,\sigma) \in \SSS_l[k]$.
 Then $\mu_k(B,\sigma)=0$ if and only if $(B,\sigma)$ belongs to $\SSS_l[k+1]$.
\end{lemma}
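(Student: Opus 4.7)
The plan is to unwind the definition of $\mu_k$ and then invoke the characterization of $\SSS_l[m]$ in terms of longitudes recalled just before the definition of $\mu_k$: namely, that $(B,\sigma)\in\SSS_l[m]$ if and only if $\lambda_i\in\pi_1(S)_m$ for all $i$.

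First I would observe that $\varpi/\varpi_2=H_1(D_l)$ is the free abelian group with basis $\{x_1,\dots,x_l\}$, because $D_l$ is homotopy equivalent to a wedge of $l$ circles representing the curves $x_i$. Consequently, for any family $(y_i)_{i=1}^l$ with $y_i\in\varpi_k/\varpi_{k+1}$, the element $\sum_i x_i\otimes y_i\in(\varpi/\varpi_2)\otimes_{\Z}(\varpi_k/\varpi_{k+1})$ vanishes if and only if each $y_i$ is zero.

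Next I would apply this with $y_i=s_{+,\ast}^{-1}(\lambda_i)$. Since by hypothesis $(B,\sigma)\in\SSS_l[k]$, the recalled fact gives $\lambda_i\in\pi_1(S)_k$ for all $i$. The isomorphism $s_{+,\ast}\colon\varpi/\varpi_{k+1}\xrightarrow{\sim}\pi_1(S)/\pi_1(S)_{k+1}$, being natural with respect to commutators, restricts to an isomorphism $\varpi_k/\varpi_{k+1}\xrightarrow{\sim}\pi_1(S)_k/\pi_1(S)_{k+1}$, so $s_{+,\ast}^{-1}(\lambda_i)$ does lie in $\varpi_k/\varpi_{k+1}$. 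By the first step, $\mu_k(B,\sigma)=0$ is therefore equivalent to $s_{+,\ast}^{-1}(\lambda_i)\equiv 0 \pmod{\varpi_{k+1}}$ for every $i$, and pushing this condition forward through $s_{+,\ast}$ (applied mod $\varpi_{k+2}$) gives the equivalent statement that $\lambda_i\in\pi_1(S)_{k+1}$ for all $i$.

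Finally I would invoke the recalled fact once more, now at level $k+1$: the longitudes all satisfy $\lambda_i\in\pi_1(S)_{k+1}$ precisely when $(B,\sigma)\in\SSS_l[k+1]$. Chaining the equivalences completes the proof. There is no genuine obstacle here; the only point requiring a little care is to track that the filtrations $\{\varpi_m\}$ and $\{\pi_1(S)_m\}$ correspond under the Stallings isomorphisms used to define $A_k$ and $\mu_k$, which follows from the naturality of the lower central series.
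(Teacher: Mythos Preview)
Your argument is correct and is exactly the approach the paper indicates: the paper does not spell out a proof but simply states that the lemma ``directly follows from the above fact'' (the characterization $(B,\sigma)\in\SSS_l[m]\iff\lambda_i\in\pi_1(S)_m$ for all $i$). Your write-up is the natural way to make that one-line remark precise, using that $\varpi/\varpi_2\cong\Z^l$ is free on the $x_i$ so the tensor $\sum x_i\otimes y_i$ vanishes iff each $y_i$ does.
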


Next, we regard $\mu_k(B,\sigma)$ as a linear combination of connected tree Jacobi diagrams.
The subalgebra of $\A(C)$ generated by tree Jacobi diagrams is denoted by $\A^t(C)$, which is identified with the quotient by the ideal generated by looped Jacobi diagrams, and the image of $x \in \A(C)$ by the quotient map is denoted by $x^t$.
Moreover, the subspace of $\A^t(C)$ spanned by connected Jacobi diagrams is denoted by $\A^{t,c}(C)$.

The free Lie algebra over $\Q$ generated by a set $C$ is denoted by $\Lie(C)$ and its degree $n$ part is denoted by $\Lie_n(C)$.
We define the linear map $\eta_n\colon \A^{t,c}_n(C) \to \Q C \otimes_{\Q} \Lie_n(C)$ by
\[\eta_k(D) := \sum_{\text{$v$}} (\text{color of $v$}) \otimes \comm(D_v),\]
where $v$ runs over all external vertices in $D$, and $D_v$ is the tree rooted at $v$.
Here, $\comm(D_v)$ is defined as follows:
\[\comm \left( \raisegraph{-2.3em}{5em}{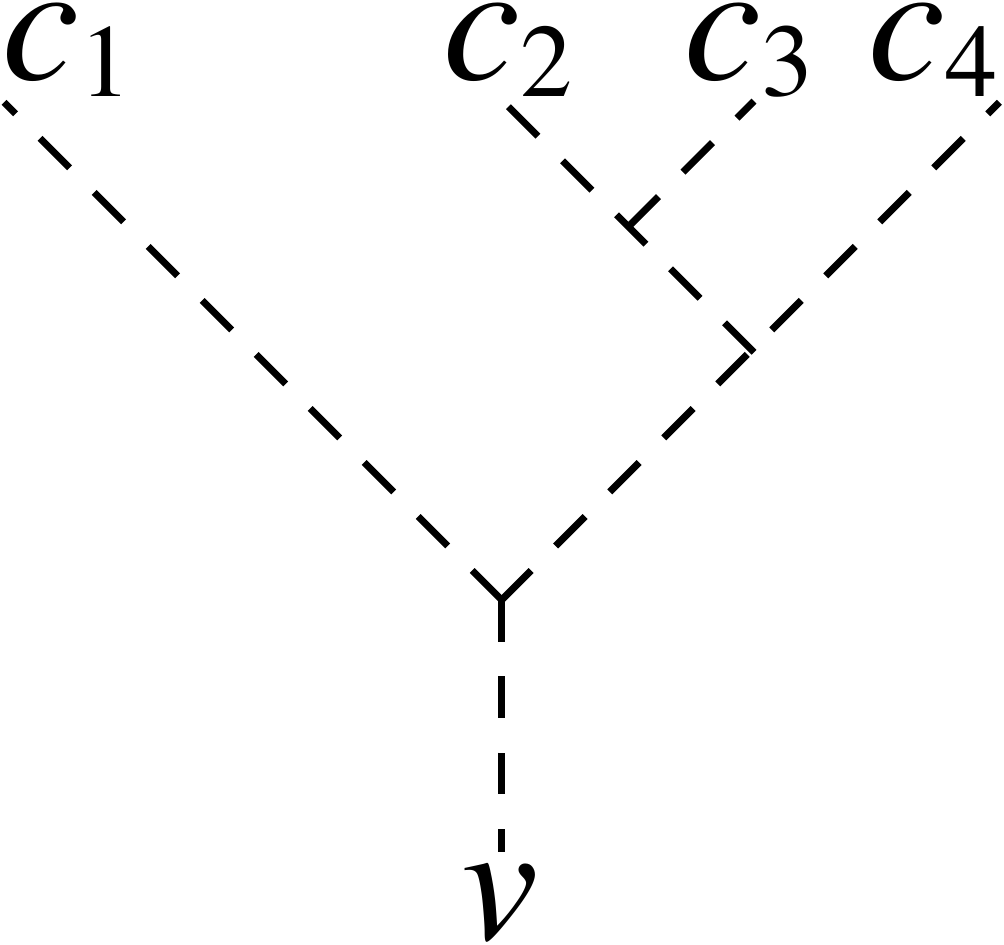} \right) = [c_1,[[c_2,c_3],c_4]].\]
Moreover, it is well known that the sequence
\begin{align}\label{eq:EtaBracket}
 0 \to \A^{t,c}_{k-1}(C) \xrightarrow{\eta_{k-1}} \Q C \otimes_\Q \Lie_k(C) \xrightarrow{[-,-]} \Lie_{k+1}(C) \to 0
\end{align}
is exact (see, for example, \cite[Theorem 1]{Lev02}).

\begin{remark}
 The map $\eta_k$ is same as in \cite[Section~4.3]{HaMa12}, which differs from \cite{HabN00}, \cite{Lev02}, \cite{Mof06} and \cite{CHM08} by $(-1)^k$.
\end{remark}

Let $G$ be the free group generated by $C=\{c_1,\dots,c_r\}$ and let $G_k$ be its lower central series.
Recall the well-known fact that there exists a natural isomorphism $\alpha_n\colon (G/G_2 \otimes G_k/G_{k+1}) \otimes \Q \to \Q C \otimes_\Q \Lie_k(C)$.
In particular, if $C=\fc{l}^\ast$, then one can show that $[-,-] \circ \alpha_k \circ \mu_k = 0$.
The exact sequence \eqref{eq:EtaBracket} enable us to define $\mu_k^{\A}(B,\sigma) \in \A^{t,c}_{k-1}(C)$ to be the unique element sent to $\alpha_k(\mu_k(B,\sigma))$ by $\eta_{k-1}$.

Finally, we discuss a relation between cobordisms and string links.
Let $w \in \Mon(\bullet,\circ)$ and let $g:=|w^\bullet|$, $n:=|w^\circ|$, $l:=2g+n$. 
Figure~\ref{fig:MilnorJohnson} represents a map from $\{(M,\tau,m) \in \LCob(w,w) \mid \tau=\Id_{\SS_n}\}$ to $\SSS_l$, which sends $\Id_w$ to the trivial string link on $l$ strands, which is an extension of the original one defined in \cite{HabN00}, \cite{CHM08}.

\begin{figure}[h]
 \centering
 \includegraphics[width=0.9\columnwidth]{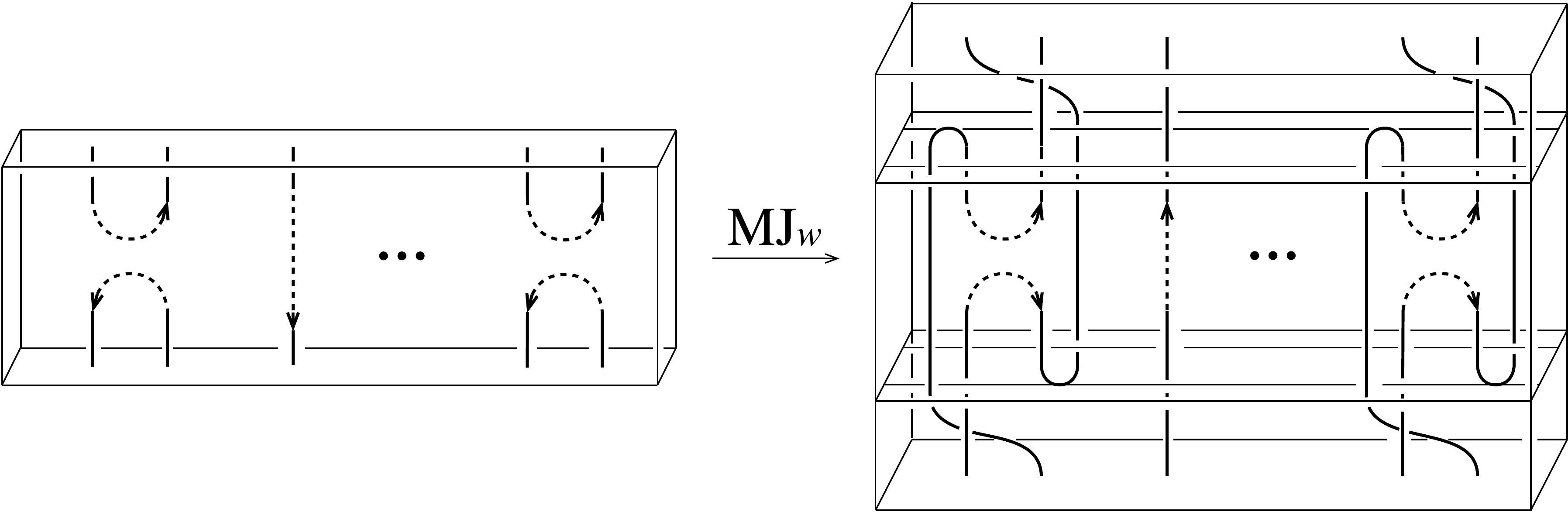}
 \caption{The Milnor-Johnson correspondence}
 \label{fig:MilnorJohnson}
\end{figure}

\begin{definition}
 The \emph{Milnor-Johnson correspondence} $\MJ_w$ is the bijection from the $Y_1$-equivalence class of $\Id_w$ in $\LCob(w,w)$ to the $Y_1$-equivalence class of the trivial string link in $\SSS_l$, depicted in Figure~\ref{fig:MilnorJohnson}.
\end{definition}

Let us introduce a map $R_w$ for $w \in \Mon(\bullet,\circ)$ to state the main theorem in this section.
We first define the bijection $\rho_w\colon \fc{g}^{+}\cup\fc{g}^{-}\cup\fc{n}^0 \to \fc{2g+n}^\ast$ by sending the $j$th color in the sequence $\left(w / \text{$i$th}\ \bullet \mapsto i^{-}i^{+},\ \text{$i$th}\ \circ \mapsto i^0 \right)$ to $j^\ast$.
Next, $R_w\colon \A(X,\fc{g}^{+}\cup\fc{g}^{-}\cup\fc{n}^0) \to \A(X,\fc{2g+n}^\ast)$ is defined by
\[R_w(D) := \left( D \middle/ i^{-}\mapsto -\rho_w(i^-),\ i^{+}\mapsto \rho_w(i^+),\ i^0\mapsto -\rho_w(i^0) \right).\]
For example, if $w=\bullet\circ$, then we have
\[R_w(D) = \left( D \middle/ 1^{-}\mapsto -1^\ast,\ 1^{+}\mapsto 2^\ast,\ 1^0\mapsto -3^\ast \right).\]

\begin{figure}[h]
 \centering
 \includegraphics[width=0.8\columnwidth]{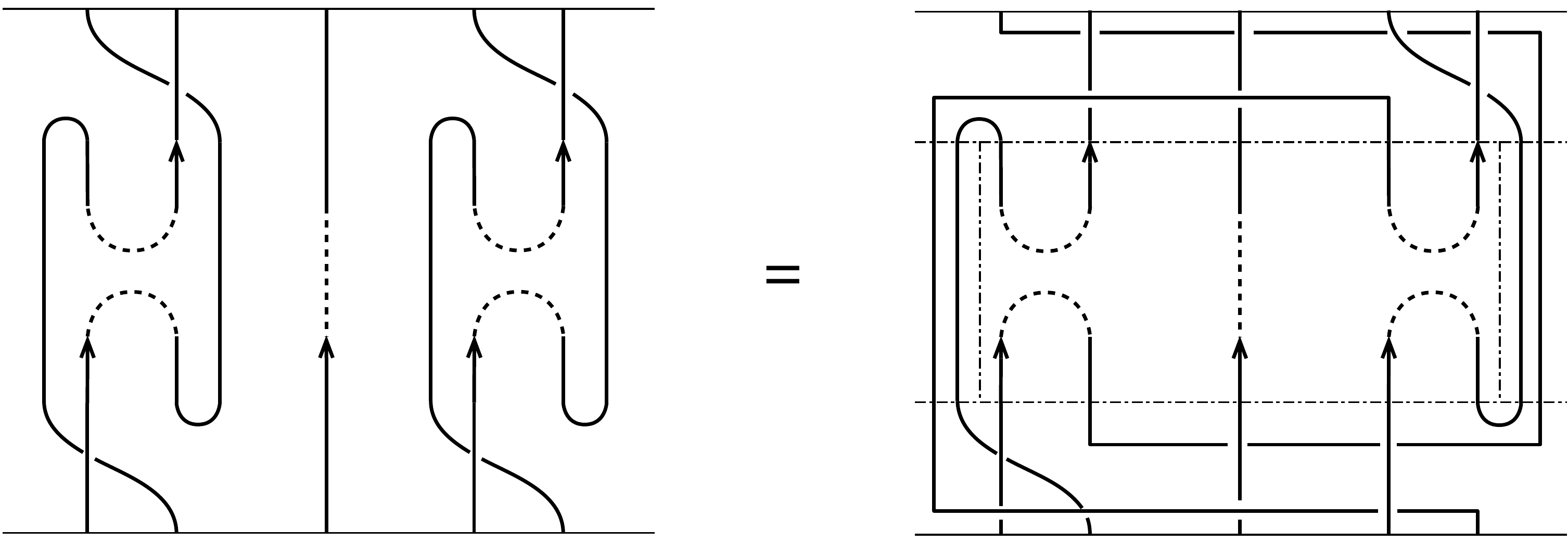}
 \caption{An example of a transformation with $w=\bullet\circ\bullet$}
 \label{fig:Psi}
\end{figure}

\begin{lemma}\label{lem:Psi}
 There exists a linear map
 \[\Psi\colon \A(\curvearrowru^{\fc{g}^+} \curvearrowleft^{\fc{g}^-} \downarrow^{\fc{n}^0}) \to \A(\downarrow^{\fc{2g+n}^\ast})\]
 such that $\Psi(Z(\D^{-1}(M))) = \ZKLMO(\MJ_w(M))$.
 Moreover, $\Psi$ satisfies
 \[\chi_{\fc{2g+n}^\ast}^{-1}\Psi(E) = \left[\sum_{i=1}^g \strutgraph{\rho_w(i^+)}{-\rho_w(i^-)} \qquad \right] \sqcup R_w(\widetilde{E}) + (\ideg>\ideg E),\]
 for any Jacobi diagram $E \in \A(\curvearrowru^{\fc{g}^+} \curvearrowleft^{\fc{g}^-} \downarrow^{\fc{n}^0})$.
\end{lemma}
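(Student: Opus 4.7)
The plan is to define $\Psi$ as the Jacobi-diagram counterpart of the skeleton operations that turn $\D^{-1}(M)$ into $\MJ_w(M)$. Geometrically, $\MJ_w$ cuts the $\cap$-like and $\cup$-like components of the bottom-top tangle, reverses orientations of the appropriate arcs, and reconnects everything—together with the vertical $\fc{n}^0$-components—into a downward string link on $2g+n$ strands, where the new strands are indexed by $\fc{2g+n}^\ast$ via the bijection $\rho_w$. I will mirror each of these moves on the skeleton of a Jacobi diagram using only the elementary operations $S$ and $\natural$ already available in the category $\A$.

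First, I would define $\Psi$ as an explicit composition in $\A$: apply the orientation reversal $S$ to the intervals indexed by $\fc{g}^-$ (so the former $\curvearrowleft$-skeleton points downward) and to the $\fc{n}^0$-intervals (so they match the downward orientation of the final string-link skeleton), then compose with the trivial $q$-tangle that straightens each former $\cap$- or $\cup$-arc into two parallel descending intervals at the positions prescribed by $\MJ_w$, and finally relabel the resulting $\downarrow^{\fc{2g+n}^\ast}$-skeleton via $\rho_w$.

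Second, to verify $\Psi(Z(\D^{-1}(M))) = \ZKLMO(\MJ_w(M))$, I would use that $\ZKLMO\colon \T_q\Cub \to \A$ is a tensor-preserving functor (Section~\ref{subsec:DefKontsevichLMOinv}), that the straightening tangles involved in $\MJ_w$ are trivial $q$-tangles in $[-1,1]^3$ whose $\ZKLMO$-values reduce to identity morphisms of $\A$, and that $\ZK$ intertwines the geometric orientation reversal with the combinatorial map $S$. Each elementary geometric move used by $\MJ_w$ then corresponds, at the level of Jacobi diagrams, to one of the operations entering the definition of $\Psi$.

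Third, for the formula on $\chi_{\fc{2g+n}^\ast}^{-1}\Psi(E)$, I would push $\chi^{-1}$ past the straightening and use Lemma~\ref{lem:filter}: deleting the skeleton of a Jacobi diagram introduces only an error of strictly higher $i$-filter. The $S$-reversals applied to the $\fc{g}^-$- and $\fc{n}^0$-intervals then contribute exactly the sign flips encoded in $R_w$ (that is, $i^- \mapsto -\rho_w(i^-)$ and $i^0 \mapsto -\rho_w(i^0)$, while $i^+ \mapsto \rho_w(i^+)$), producing $R_w(\widetilde{E})$ from $\widetilde{E}$. The strut prefactor displayed in the statement arises from the closure step, where the two endpoints of each straightened $\bullet$-arc are joined by the identity $\downarrow$, whose $\chi^{-1}$ is, by Lemma~\ref{lem:filter} again, exactly a strut up to higher $i$-filter terms.

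The main obstacle will be keeping track of orientations and struts in parallel: I must match the $S$-reversals with the sign conventions defining $R_w$ while simultaneously recovering the explicit strut term coming from the closures, and then confirm that every remaining discrepancy lies strictly above $\ideg E$ in $i$-filter. A careful case-by-case check on the three types of letters in $w$ (one $\bullet$ at the top, one $\bullet$ at the bottom, one $\circ$) and on how $\rho_w$ interleaves them should suffice.
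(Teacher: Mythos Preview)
Your approach is essentially the paper's: decompose $\MJ_w(M)$ as a top tangle, then $\D^{-1}(M)$ with the $\fc{g}^-\cup\fc{n}^0$-components orientation-reversed, then a bottom tangle, and set $\Psi(E):=\ZKLMO(\text{bottom})\circ(\Id\otimes S_{\fc{g}^-\cup\fc{n}^0}(E)\otimes\Id)\circ\ZKLMO(\text{top})$; then Lemma~\ref{lem:filter} yields the displayed formula.

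One point to correct: the top and bottom tangles coming from the Milnor--Johnson correspondence are \emph{not} trivial and their $\ZKLMO$-values are not identity morphisms of $\A$---they carry associators, caps and cups. What is true is that after applying $\chi^{-1}$ their $s$-reduction is exactly the strut exponential $\bigl[\sum_i\strutgraph{\rho_w(i^+)}{-\rho_w(i^-)}\bigr]$ and everything else has strictly positive $\ifil$. Thus the strut prefactor in the formula is the leading term of $\ZKLMO(\text{top})\circ\ZKLMO(\text{bottom})$, not something produced by ``identity $\downarrow$'s'' in a separate closure step. With this adjustment your argument for the second assertion via Lemma~\ref{lem:filter} goes through, and the first assertion is immediate from the functoriality of $\ZKLMO$ and its compatibility with orientation reversal.
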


\begin{proof}
 The string link $\MJ_w(M)$ can be transformed as illustrated in Figure~\ref{fig:Psi}.
 Therefore, we set 
 \[\Psi(E) := \ZKLMO(\text{bottom part}) \circ (\Id \otimes S_{\fc{g}^{-}\cup\fc{n}^0}(E) \otimes \Id) \circ \ZKLMO(\text{top part}),\]
 where $S_{\fc{g}^{-}\cup\fc{n}^0}(E)$ is the element obtained from $E$ by applying the orientation reversal map $S$ to the $(\fc{g}^{-}\cup\fc{n}^0)$-indexed components of $E$.
 Then, $\Psi$ has the desired property.
\end{proof}

\begin{theorem}\label{thm:MilnorInv}
 Let $l\ge1$, $k\ge2$, $(B,\sigma) \in \SSS_l[k]$ and let $w \in \Mon(\bullet,\circ)$ such that $l=2|w^\bullet|+|w^\circ|$.
 Then $\Zt^{Y,t}_{<k}(\MJ_w^{-1}(B,\sigma))$ is equal to
 \[\varnothing +R_w^{-1}(\mu_k^{\A}(B,\sigma)) \in \A^Y_{<k}(\fc{g}^{+}\cup\fc{g}^{-}\cup\fc{n}^0).\]
 Conversely, if $(B,\sigma) \in \SSS_l[2]$ and $\Zt^{Y,t}_{<k}(\MJ_w^{-1}(M,\sigma))$ is of the form
 \[\varnothing + x \quad(\ideg x=k-1),\]
 then $(B,\sigma)$ belongs to $\SSS_l[k]$, and $R_w(x)$ is equal to $\mu_k^{\A}(B,\sigma)$.
\end{theorem}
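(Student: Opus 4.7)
The plan is to reduce the statement about $\Zt$ of the cobordism $\MJ_w^{-1}(B,\sigma)$ to the corresponding known statement about $\ZKLMO$ of the string link $(B,\sigma)$, using Lemma~\ref{lem:Psi} and the Milnor-Johnson correspondence as the bridge. The input on the string-link side is the theorem of Moffatt \cite{Mof06}, extending Habegger-Masbaum \cite{HaMa00}: if $(B,\sigma)\in\SSS_l[k]$, then the tree reduction of $\chi_{\fc{l}^\ast}^{-1}\ZKLMO(B,\sigma)$ equals $\varnothing+\mu_k^{\A}(B,\sigma)$ modulo internal degree $\geq k$, and conversely this leading term vanishes exactly when $(B,\sigma)\in\SSS_l[k+1]$.

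Set $M:=\MJ_w^{-1}(B,\sigma)$ and $E:=Z(\D^{-1}(M))$, so that $\Psi(E)=\ZKLMO(B,\sigma)$ by Lemma~\ref{lem:Psi}. First I would apply $\chi^{-1}$ on both sides: that lemma writes $\chi^{-1}\ZKLMO(B,\sigma)$ as a purely strut contribution disjoint-union $R_w(\widetilde{E})$, plus terms of strictly higher $i$-filter. By Lemma~\ref{lem:filter}, $\widetilde{E}$ agrees with $\chi_{\pi_0\gamma}^{-1}E$ modulo the same higher filter, so after $Y$- and tree-reductions one obtains
\[R_w\bigl((\chi_{\pi_0\gamma}^{-1}E)^{Y,t}_{<k}\bigr) \;=\; \bigl(\chi^{-1}\ZKLMO(B,\sigma)\bigr)^{Y,t}_{<k}\]
in the relevant range of internal degrees (the strut piece drops out under $Y$-reduction).

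Next I would compare $\Zt(M)=\chi_{\pi_0\gamma}^{-1}E\circ_{\tsA}\TT_g$ with $\chi_{\pi_0\gamma}^{-1}E$ on their $Y$-tree parts. Since $\TT_g$ has $s$-reduction $[I_g]$ and its $Y$-part begins in internal degree $\geq 1$, Lemma~\ref{lem:Composition} shows that composition with $\TT_g$ preserves the $Y$-tree part in internal degree $<k$, up to terms that either raise the internal degree (and are absorbed into the error) or create loops via gluings at $\fc{g}^{\pm}$ (and die under tree reduction). Combining the two steps yields $R_w\bigl(\Zt^{Y,t}_{<k}(M)\bigr) = \bigl(\chi^{-1}\ZKLMO(B,\sigma)\bigr)^{Y,t}_{<k}$, and Moffatt's theorem converts the right-hand side into $\varnothing + \mu_k^{\A}(B,\sigma)$, giving the first statement.

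The main technical obstacle is this third step: one must verify carefully, via Lemma~\ref{lem:Composition}, that every perturbation coming from the non-trivial $Y$-part of $\TT_g$ either contributes to strictly higher internal degree or produces a loop, and hence vanishes after applying the $Y$-tree reduction in internal degree $<k$. For the converse, suppose $(B,\sigma)\in\SSS_l[2]\setminus\SSS_l[k]$; by Lemma~\ref{lem:mu} there is a least $j\in\{2,\dots,k-1\}$ with $(B,\sigma)\in\SSS_l[j]\setminus\SSS_l[j+1]$, so $\mu_j^{\A}(B,\sigma)\neq 0$. Applying the already-proved forward statement with $k$ replaced by $j$ produces a nonzero term in $\Zt^{Y,t}_{<k}(M)$ of internal degree $j-1<k-1$, contradicting the hypothesis that $\Zt^{Y,t}_{<k}(M)=\varnothing+x$ with $\ideg x=k-1$. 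Hence $(B,\sigma)\in\SSS_l[k]$, and the first statement immediately gives $R_w(x)=\mu_k^{\A}(B,\sigma)$.
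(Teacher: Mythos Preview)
Your overall strategy — relate $\Zt(M)$ to $\chi^{-1}\ZKLMO(B,\sigma)$ via Lemma~\ref{lem:Psi} and then invoke Moffatt's theorem — is exactly the paper's, but the two intermediate identities you write down are not true as stated, and this is not merely a matter of filling in details.

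Take $M=\Id_w$. Then $\chi^{-1}E=\chi^{-1}Z(\Id_w)=\TT_g^{-1}$, while $\Zt(\Id_w)=[I_g]$ and $\chi^{-1}\ZKLMO(\text{trivial string link})=\varnothing$. Your claim (B) would give $(\TT_g^{-1})^{Y,t}=\varnothing$, and your claim (A) would give $R_w\bigl((\TT_g^{-1})^{Y,t}\bigr)=\varnothing$; but $\TT_g$ is built from the BCH element $\lambda$ and carries genuine tree contributions in its $Y$-part, so neither holds. The point you miss in (B) is that in the contraction $\langle L,R\rangle$ from Lemma~\ref{lem:Composition} there is a term where $L$ contributes $\varnothing$ and $R$ contributes a connected tree component of $\TT_g^Y$; this neither raises the internal degree of anything already present nor creates a loop. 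Similarly, Lemma~\ref{lem:Psi} is stated for a single Jacobi diagram $E$ (equivalently, an $\ideg$-homogeneous element); applied term by term to the inhomogeneous series $Z(\D^{-1}(M))$, the higher-$\ideg$ corrections from each $E_{d'}$ land in every degree $d>d'$, so they do not assemble into a single ``higher $i$-filter'' error as you assert.

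The paper repairs this by \emph{induction on $k$}. The inductive hypothesis together with Lemma~\ref{lem:mu} forces $\Zt(M)=[I_g]\sqcup\bigl(\varnothing+x+(\ideg\ge k\text{ or looped})\bigr)$ with $x$ homogeneous of $\ideg=k-1$. Now one computes $\Zt(M)\circ\TT_g^{-1}=\TT_g^{-1}+[I_g]\sqcup x+(\ideg\ge k\text{ or looped})$ and applies $\chi^{-1}\Psi\chi$ to each summand separately: the first summand is $\chi^{-1}\Psi\bigl(Z(\Id_w)\bigr)=\chi^{-1}\ZKLMO(\text{trivial})=\varnothing$, and Lemma~\ref{lem:Psi} applies cleanly to the second because $[I_g]\sqcup x$ is $\ideg$-homogeneous of degree $k-1$. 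The troublesome $\TT_g^{\pm1}$ tree contributions are thus isolated and killed in one stroke, rather than having to cancel between your (A) and (B). Your converse argument via Lemma~\ref{lem:mu} is fine and matches the paper's.
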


\begin{proof}
 Let $(B,\sigma) \in \SSS_l[k]$ and let $g=|w^\bullet|$, $n=|w^\circ|$.
 We set $(M,\Id_{\SS_n},m):=\MJ_w^{-1}(B,\sigma)$.
 Let us prove the former by induction on $k$.
 We first assume that it is true up to $k-1$, where $k\ge3$.
 (The case $k=2$ is mentioned later.)
 It follows from Lemma~\ref{lem:mu} that $\Zt(M)$ is written as
 \[[I_g^{+-}] \sqcup \left(\varnothing +x +(\text{$\ideg \ge k$ or looped}) \right),\]
 where $\deg x=k-1$.
 Hence, Lemma~\ref{lem:Composition} implies
\begin{align*}
 & \Zt(M) \circ_{\tsA} \TT_g^{-1} \\
 &= \TT_g^{-1} + [I_g^{+-}] \sqcup \ang{\left( x +(\text{$\ideg \ge k$ or looped}) \middle/ \parbox{3em}{$i^{+}\mapsto \\ i^\ast +i^{+}$}\right), \left( (\TT_g^{-1})^Y \middle/ \parbox{3em}{$i^{-}\mapsto \\ i^\ast +i^{-}$} \right)}_{\fc{g}^\ast} \\
 &= \TT_g^{-1} + [I_g^{+-}] \sqcup x +(\text{$\ideg \ge k$ or looped}).
\end{align*}
 By applying the composite map $\chi_{\fc{2g+n}^\ast}^{-1} \Psi \chi_{\fc{g}^{+}\cup\fc{g}^{-}\cup\fc{n}^0}$, we have
\begin{align*}
 & \chi_{\fc{2g+n}^\ast}^{-1} \Psi \chi_{\fc{g}^{+}\cup\fc{g}^{-}\cup\fc{n}^0} (\Zt(M) \circ \TT_g^{-1}) \\
 &= \chi^{-1}\Psi\chi(\TT_g^{-1}) + \left[\sum_{i=1}^g \strutgraph{\rho_w(i^+)}{\rho_w(i^-)} \qquad \right] \sqcup R_w\left( \widetilde{\chi([I_g^{+-}] \sqcup x)} \right) + \left(\parbox{4.3em}{$\ideg \ge k$ \\ \text{or looped}}\right) \\
 &= \chi^{-1}\Psi(Z(\Id_w)) + \left[\sum_{i=1}^g \strutgraph{\rho_w(i^+)}{\rho_w(i^-)} \qquad \right] \sqcup R_w\left( [I_g^{+-}] \sqcup x \right) +\left(\parbox{4.3em}{$\ideg \ge k$ \\ \text{or looped}}\right) \\
 &= \varnothing +R_w(x) +(\text{$\ideg \ge k$ or looped}),
\end{align*}
 where we use Lemma~\ref{lem:Psi} and the fact that $\chi^{-1}\Psi\chi(\text{looped}) \subset (\text{looped})$.
 On the other hand, by \cite[Theorem~3]{Mof06},
\begin{align*}
 \chi^{-1}\Psi\chi (\Zt(M) \circ \TT_g^{-1})
 &= \chi^{-1}\Psi Z(M) \\
 &= \chi^{-1}\ZKLMO(\MJ_w(M)) \\
 &= \varnothing + \mu_n(B,\sigma) +(\text{$\ideg \ge k$ or looped}).
\end{align*}
 Comparing the internal degree $k$ parts of each tree reduction, we conclude $\mu_k(B,\sigma)=x$.
 
 Here, we have to consider the case $n=2$.
 In general, $\Zt^{Y,t}(M)$ is of the form $\varnothing +x +(\ideg>1)$ where $\ideg x=1$.
 Hence, applying the above argument, we conclude $x=\mu_2(B,\sigma)$.
 
 The latter follows from the former and Lemma~\ref{lem:mu}.
\end{proof}

\begin{example}\label{ex:Borromean}
 Let us compute the second Milnor invariant of $\sigma=([-1,1]^3,\sigma) \in \SSS_3[2]$ illustrated in Figure~\ref{fig:Borromean}.
 
\begin{figure}[h]
 \centering
 \includegraphics[width=0.9\columnwidth]{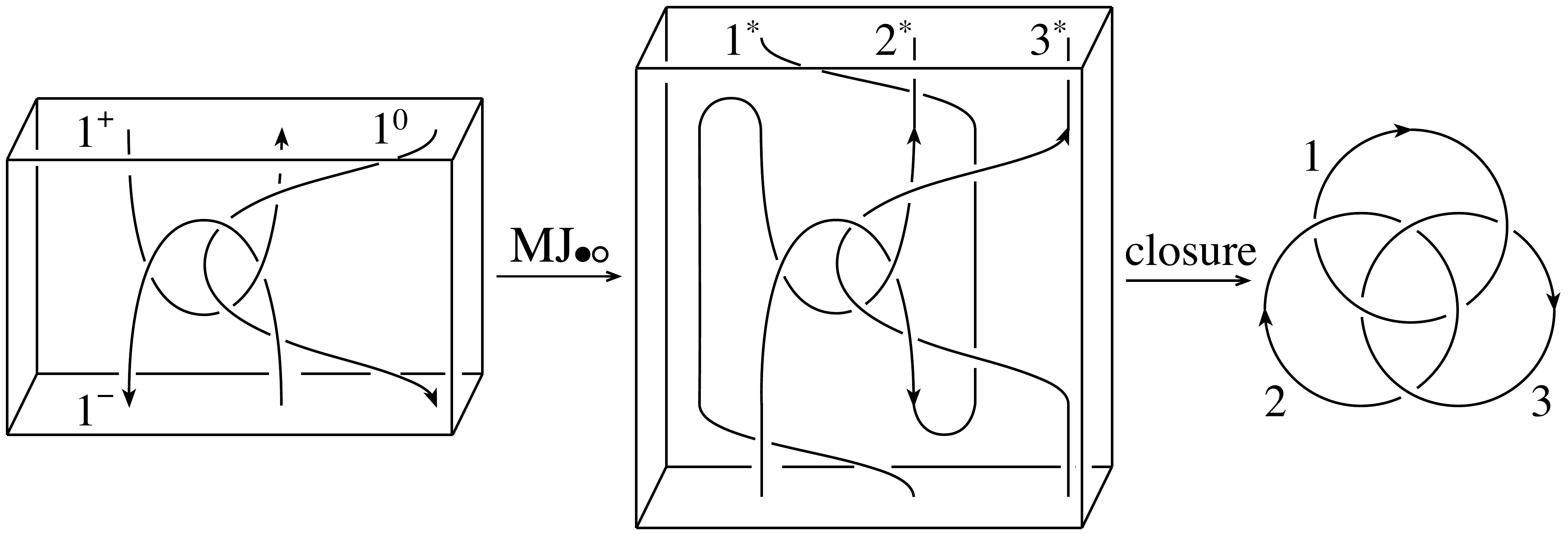}
 \caption{An examples of a bottom-top tangle, the corresponding string link and its closure}
 \label{fig:Borromean}
\end{figure}%
 
 First, one can check that $\MJ_{\bullet\circ}^{-1}(\sigma) = \psi_{\circ,\bullet}\circ\psi_{\bullet,\circ}$, where $\psi_{\circ,\bullet}$ and $\psi_{\bullet,\circ}$ are the cobordisms define in Table~\ref{tab:Gen}.
 It follows from the functoriality of $\Zt$, Lemma~\ref{lem:Composition} and Table~\ref{tab:Value} that
\begin{align*}
 \Zt^Y(\MJ_{\bullet\circ}^{-1}(\sigma)) &= \chi_{1^0}^{-1}\chi_{1^0,1^{0'}} \left( \varnothing +\frac{1}{2}\rTgraph{1^+}{1^-}{1^0} + \frac{1}{2}\rTgraph{1^+}{1^-}{1^{0'}} +(\ideg>1) \right) \\
 &= \varnothing + \rTgraph{1^+}{1^-}{1^0} + (\ideg>1).
\end{align*}
 Thus the previous theorem implies $\mu_2^{\A}(\sigma) = (-1)^2\rTgraph{2^\ast}{1^\ast}{3^\ast}$.
 Therefore,
 \[\mu_2(\sigma) = x_1\otimes[x_2,x_3] +x_2\otimes[x_3,x_1] +x_3\otimes[x_1,x_2] \in \varpi/\varpi_2 \otimes \varpi_2/\varpi_3.\]
 
 Moreover, by \cite[Remark in page~1160]{CHM08}, the Milnor $\overline{\mu}$-invariants of length 3 of the closure of $\sigma$, which is the Borromean rings, are as follows:
\[\overline{\mu}_{\widehat{\sigma}}(j_1,j_2;i)= \begin{cases}
 \sgn(j_1\;j_2\;i) & \text{if $\{j_1,j_2,i\}=\{1,2,3\}$,} \\
 0 & \text{otherwise.}
\end{cases}\]
\end{example}

\def\cprime{$'$} \def\cprime{$'$} \def\cprime{$'$}

\end{document}